\documentclass[twoside,11pt]{article}

 \usepackage[abbrvbib, preprint]{jmlr2e}

\usepackage{jmlr2e}

\usepackage{amssymb,latexsym,amsmath}
\usepackage{epsfig}
\usepackage{caption}
 \usepackage{float}
\usepackage{amssymb,nicefrac,upgreek,mathtools,verbatim}

\usepackage{amssymb,nicefrac,upgreek,mathtools,verbatim}

\usepackage[mathscr]{eucal}

\usepackage[capitalize,nameinlink]{cleveref}

 \usepackage{amsmath}
\usepackage{amsfonts}
\usepackage{array}
\usepackage{dsfont}
\usepackage{hyperref}
\usepackage{amssymb}

 \usepackage{algorithm}
\usepackage{algorithmic}
\usepackage{pifont}

\usepackage{makeidx}
\usepackage{graphicx}
\graphicspath{ {./images/} }

\allowdisplaybreaks

 \usepackage{graphicx,fancybox,latexsym,epsfig}
\usepackage{fancyhdr,amsmath,times,amsxtra,amssymb}
\usepackage{color}

\newcommand{\sE}{{\mathscr{E}}}

\newcommand{\Uadm}{\mathfrak U}
\newcommand{\Act}{\mathbb{U}}
\newcommand{\Usm}{\mathfrak U_{\mathsf{sm}}}

\newcommand{\Um}{\mathfrak U_{\mathsf{m}}}
\newcommand{\Udsm}{\mathfrak U_{\mathsf{dsm}}}

\newcommand{\pV}{\mathrm{V}} 
\newcommand{\pv}{\mathrm{v}} 

\newcommand{\fB}{{\mathfrak{B}}}  
\newcommand{\cB}{{\mathcal{B}}}  
\newcommand{\sB}{{\mathscr{B}}}  
\newcommand{\cC}{{\mathcal{C}}}   
 
\newcommand{\sF}{{\mathfrak{F}}}   
\newcommand{\cJ}{{\mathcal{J}}}  
  
\newcommand{\sK}{{\mathscr{K}}}  
\newcommand{\sL}{{\mathscr{L}}}  %
\newcommand{\Lp}{{L}}            

\newcommand{\cM}{{\mathcal{M}}} 
\newcommand{\fM}{{\mathfrak{M}}} 
\newcommand{\cP}{{\mathcal{P}}}  

\newcommand{\Lyap}{{\mathcal{V}}}  

\newcommand{\RR}{\mathds{R}}
\newcommand{\NN}{\mathds{N}}
\newcommand{\ZZ}{\mathds{Z}}

\newcommand{\Rd}{{\mathds{R}^{d}}}
\DeclareMathOperator{\Exp}{\mathbb{E}}

\newcommand{\cD}{\mathcal{D}} 
\newcommand{\sD}{{\mathscr{D}}}   
 
\newcommand{\Sob}{{\mathscr W}}    
\newcommand{\Sobl}{{\mathscr W}_{\text{loc}}} 

\newcommand{\df}{:=}
\newcommand{\transp}{^{\mathsf{T}}}

\DeclareMathOperator*{\trace}{Tr}

\newcommand{\sorder}{{\mathfrak{o}}}
\newcommand{\grad}{\nabla}
\newcommand{\uuptau}{{\Breve\uptau}}

\newcommand{\abs}[1]{\lvert#1\rvert}
\newcommand{\norm}[1]{\lVert#1\rVert}

\allowdisplaybreaks

\ShortHeadings{Reinforcement Learning for Control of Diffusion Processes}{Bayraktar, Kara, Pradhan and Y\"uksel}
\firstpageno{1}

\begin{document}
\sloppy
\title{Reinforcement Learning for Discounted and Ergodic Control of Diffusion Processes \thanks{
This research was supported in part by
the Natural Sciences and Engineering Research Council (NSERC) of Canada and the National Science Foundation of the United States.}
}

\author{Erhan Bayraktar, Ali D. Kara, Somnath Pradhan and Serdar Y\"uksel
\thanks{ Erhan Bayraktar is with the Department of Mathematics, University of Michigan, Ann Arbor, MI, USA, Email: erhan@umich.edu.\\
Ali D. Kara is with the Department of Mathematics, Florida State University, Tallahassee, FL, USA, Email: akara@fsu.edu.\\
Somnath Pradhan is with Department of Mathematics, Indian Institute of Science Education and Research Bhopal, Bhopal, MP, India, Email: somnath@iiserb.ac.in\\
S. Y\"uksel is with the Department of Mathematics and Statistics,
     Queen's University, Kingston, ON, Canada,
     Email: yuksel@queensu.ca}
     }

\editor{}

\maketitle

\begin{abstract}
This paper develops a quantized Q-learning algorithm for the optimal control of controlled diffusion processes on $\mathbb{R}^d$ under both discounted and ergodic (average) cost criteria. We first establish near-optimality of finite-state MDP approximations to discrete-time discretizations of the diffusion, then introduce a quantized Q-learning scheme and prove its almost-sure convergence to near-optimal policies for the finite MDP. These policies, when interpolated to continuous time, are shown to be near-optimal for the original diffusion model under discounted costs and -via a vanishing-discount argument- also under ergodic costs for sufficiently small discount factors. The analysis applies under mild conditions (Lipschitz dynamics, non-degeneracy, bounded continuous costs, and Lyapunov stability for ergodic case) without requiring prior knowledge of the system dynamics or restrictions on control policies (beyond admissibility). Our results complement recent work on continuous-time reinforcement learning for diffusions by providing explicit near-optimality rates and extending rigorous guarantees both for discounted cost and ergodic cost criteria for diffusions with unbounded state space.
\end{abstract}

\keywords{Ergodic Optimal Control, Controlled Diffusions, Markov Chain Approximation, Stationary Policy, Near-Optimality, Numerical Algorithms}


\section{Introduction}

In this paper, we present rigorous reinforcement learning results for controlled diffusions. Notably, we present a Q-learning algorithm and establish its convergence to near optimality for controlled diffusions under discounted and ergodic cost criteria where the state space is $\mathbb{R}^n$ for some $n \in \mathbb{N}$ (and thus, boundedness of the state space is not assumed). Towards this goal, we show that a Q-learning algorithm with piecewise constant control policies applied at discrete-time instances leads to a fixed point which then leads to a near optimal control policy for each of the criteria under mild conditions. These conditions are mild in the sense that they involve only the conditions which are needed for the existence of optimal solutions.


We start by defining the dynamics of the control problem.
The objective is to study a controlled diffusion process, $X(\cdot)$, given by the following stochastic differential equation
\begin{align}\label{diff}
X(t)=X_0+\int_0^{t}b(X(s),U(s))ds+\int_0^t\upsigma(X(s))dW(s)
\end{align}
for $t \geq 0$, where $X(t)\in\mathds{R}^{d}$. $W(\cdot)$ is the driving noise which is assumed to be a Wiener process, and  $U(\cdot)\in\mathds{U}$ is the control process with measurable paths. Here $\mathds{U}$ is a convex compact subset of $\Rd$\,. We assume that the control $U(\cdot)$ is {\it non-anticipative} \cite{ABG-book} such that for $0\leq s <t$ the noise increments, $W(t)-W(s)$, are independent of $W(r),U(r)$ for $r\leq s$. Such a control policy is called an admissible control policy. Let $\Uadm$ be the space of all admissible policies\,. 

 
 We first review that an optimal policy designed for the time discretized model via piecewise constant control policies leading to
  \begin{align*}
 X_{(k+1)h} =& X_{kh} + \int_{kh}^{(k+1)h} b(X_{s},U_{kh})ds + \int_{kh}^{(k+1)h} \upsigma(X_{s}) dW_s
 \end{align*} is near optimal for discounted cost criteria (see Theorem \ref{TD1.3}) as well as for average cost criteria (see Theorem \ref{TNearOpt1A}).

We then show that near optimal policies for the above, obtained via a properly design Quantized Q-Learning algorithm, are near optimal for the controlled diffusion problems: To this end, as our primary contribution, we introduce a  Q-learning algorithm with guaranteed convergence and near optimality properties, in the sense that policies learned via Q-learning (its quantized version as in \cite{KSYContQLearning}) are near optimal for the controlled diffusion model under discounted and average cost criteria. 

There have been closely related several recent studies which complement our analysis in several directions:

A closely related sequence of contributions \cite{wang2020reinforcement,jia2023q,jia2025erratum} consider regularized cost minimizations and the associated continuous-time optimality equations, for models which also allow for control dependence in the diffusion term. Via the HJB equation involving a regularized cost, \cite{wang2020reinforcement} develops a policy improvement method with rigorous guarantees. Along a similar entropy regularized formulation, instead of the discrete-time $Q$ function iterations, via an HJB optimality approximation,  \cite{jia2023q,jia2025erratum} introduce a density function $q_t$ with respect to time deviations of $Q$ updates to compute value functions. The updates are obtained via stochastic approximation equations, though with distinct characteristics when compared with standard discrete-time Q updates. For this analysis, the control policy is apriori restricted to be Lipschitz, and sufficient implicit conditions on optimality under convergence results are presented (such as a martingale condition) while the convergence of the numerical algorithm itself is not formally established.


 \cite{jin2025adaptive} presents an analysis based on empirical value iteration via an adaptive quantization of the state space towards a rigorous regret based sample complexity bounds, where a discrete-time sampled Euler-Maruyama model is considered.

In this paper, we consider controlled diffusion models in the entire unbounded Euclidean state space. Additionally, critically we also consider average cost criterion and also obtain rates of convergence in terms of error bounds when compared with the optimal solution for the original controlled diffusion problem. We do not make apriori restrictions on the assumed control policies. Our analysis allows one to take advantage of the relatively more mature theory of discrete-time stochastic control, as there is an established theory on existence, approximation, and learning theory for optimal discrete-time stochastic control \cite{KSYContQLearning}. In this context, a close study is \cite{bayraktar2022approximate} whose analysis was confined to compact state spaces with discounted cost. 

Our work also utilizes discrete-time approximation results for controlled diffusions. We refer to the following approximation results involving controlled diffusions on finite horizon or discounted cost criteria see, e.g., \cite{BR-02}, \cite{BJ-06}, \cite{KD92}, \cite{KH77}, \cite{KH01}, \cite{KN98A}, \cite{KN2000A}, though the ergodic control and control up to an exit time criteria have also been studied \cite{KD92,kushner2014partial}. For average cost criteria, the analysis has often been restricted to reflected diffusion processes in a smooth bounded domain; we refer the reader to \cite{kushner1990numerical,kushner2001numerical, kushner2012weak,fleming2006controlled}. In an alternative program, building on finite difference approximations for HJB equations utilizing their regularity properties, Krylov \cite{KN98A}, \cite{KN2000A} established the convergence rate for such approximation techniques, where finite difference approximations are studied to arrive at stability results. In particular, several estimates for error bounds on finite-difference approximation schemes in the problem of finding viscosity or probabilistic solutions to degenerate Bellman equations have been established. Also, for controlled non-degenerate diffusion processes, it is shown in \cite{KN99AA} that using policies which are constant on intervals of length $h^2$, one can approximate the value function with errors of order $h^{\frac{1}{3}}$. In \cite{BR-02}, \cite{BJ-06} Barles et. al. improved the error bounds obtained in \cite{KN98A}, \cite{KN2000A}, \cite{KN99AA}; along this note a recent further analysis for more general stochastic control models (including degenerate models) is presented in \cite{jakobsen2019improved} where both stochastic analysis and PDE methods are utilized. As we discuss later in the paper, we build also on the approximation results presented in \cite{pradhan2025discrete}.

Thus, with regard to our rigorous reinforcement learning results for controlled diffusions, the primary contributions of this paper can be summarized as follows: 
\begin{itemize}
\item We consider both discounted and average cost criteria: (i) The latter (average cost criterion) has not been studied to our knowledge in the literature except for \cite[Theorem 12]{jia2023q} in the implicit martingale convergence conditions reviewed above, and (ii) while learning theoretic results are present for the former (discounted cost criterion), a reinforcement learning study involving a discrete (in time and space) approximation with rigorous convergence and error bounds for a diffusion in the whole Euclidean space is novel to our knowledge. 
\item We establish near optimality of the obtained control policies from the reinforcement learning algorithm over all admissible control policies (for the original diffusion model). 
\item We also obtain explicit rates of convergence in terms of the discrete-time approximations.
\item Please see Section \ref{mainResCont} where a technical summary of our main results is presented.

\end{itemize}


\subsection{Problem Setup, Policies, and Cost Criteria}

Let $\Act$ be a convex compact subset of the Euclidean space $\RR^N$ the state space $\mathbb{X} = \RR^d$ and $\pV=\cP(\Act)$ be the space of probability measures on  $\Act$ with topology of weak convergence. Let $$b : \Rd \times \Act \to  \Rd, $$ $$ \upsigma : \Rd \to \RR^{d \times d},\, \upsigma = [\sigma_{ij}(\cdot)]_{1\leq i,j\leq d},$$ be given functions. We consider a stochastic optimal control problem whose state is evolving according to a controlled diffusion process given by the solution of the following stochastic differential equation (SDE)
\begin{equation}\label{E1.1}
d X_t \,=\, b(X_t,U_t) d t + \upsigma(X_t) d W_t\,,
\quad X_0=x\in\Rd.
\end{equation}
Where 
\begin{itemize}
\item
$W$ is a $d$-dimensional standard Wiener process, defined on a complete probability space $(\Omega, \sF, \mathbb{P})$.
\item 
 We extend the drift term $b : \Rd \times \pV \to  \Rd$ as follows:
\begin{equation*}
b (x,\mathrm{v}) = \int_{\Act}b (x,\zeta)\mathrm{v}(d \zeta), 
\end{equation*}
for $\mathrm{v}\in\pV$.
\item
$U$ is a $\pV$ valued process satisfying the following non-anticipativity condition: for $s<t\,,$ $W_t - W_s$ is independent of
\begin{align*}
\sF_s := &\,\,\mbox{the completion of}\,\,\, \sigma(X_0, U_r, W_r : r\leq s)\nonumber\\
&\,\,\,\mbox{relative to} \,\, (\sF, \mathbb{P})\,.
\end{align*}
\end{itemize}
As noted, the process $U$ is called an \emph{admissible} control, and the set of all admissible controls is denoted by $\Uadm$ (see, \cite{BG90}).

To ensure existence and uniqueness of solutions of \cref{E1.1}, we impose the following assumptions on the drift $b$ and the diffusion matrix $\upsigma$\,. 

\begin{itemize}
\item[\hypertarget{A1}{{(A1)}}]
\emph{Lipschitz continuity:\/}
The function
$\upsigma\,=\,\bigl[\upsigma^{ij}\bigr]\colon\RR^{d}\to\RR^{d\times d}$,
$b\colon\Rd\times\Act\to\Rd$ are uniformly bounded and Lipschitz continuous in $x$, $(x, \zeta)$ respectively. In other words, for some constant $C_{0}>0$, we have
\begin{align*}
\abs{b(x,\zeta_1) - b(y, \zeta_2)}^2 &+ \norm{\upsigma(x) - \upsigma(y)}^2 \nonumber\\
&\,\le\, C_{0}\,\left(\abs{x-y}^2 + \abs{\zeta_1- \zeta_2}^2\right)\,,
\end{align*}
for all $x,y\in \Rd$ and $\zeta_1, \zeta_2\in\Act$, where $\norm{\upsigma}\df\sqrt{\trace(\upsigma\upsigma\transp)}$\,.
\medskip
\item[\hypertarget{A2}{{(A2)}}]
\emph{Nondegeneracy:\/}
For each $R>0$, there exists a positive constant $C_{R}$ (depending on $R$) such that
\begin{equation*}
\sum_{i,j=1}^{d} a^{ij}(x)z_{i}z_{j}
\,\ge\,C^{-1}_{R} \abs{z}^{2} \qquad\forall\, x\in \sB_{R}\,,
\end{equation*}
and for all $z=(z_{1},\dotsc,z_{d})\transp\in\RR^{d}$,
where $a\df \frac{1}{2}\upsigma \upsigma\transp$.
\end{itemize}


By a Markov control we mean an admissible control of the form $U_t = v(t,X_t)$ for some Borel measurable function $v:\RR_+\times\Rd\to\pV$. The space of all Markov controls is denoted by $\Um$\,.
If the function $v$ is independent of $t$, then $v$ is called a stationary Markov control. The set of all stationary Markov controls is denoted by $\Usm$. A policy $v\in \Usm$ is said to be a deterministic stationary Markov policy if $v(x) = \delta_{f(x)}$ for some measurable map $f:\Rd\to \Act$. Let $\Udsm$ be space of all deterministic stationary Markov policies\,. From \cite[Section~2.4]{ABG-book}, we have that the set $\Usm$ is metrizable under the following (Borkar) topology, which defines a compact metric under which a sequence $v_n\to v$ in $\Usm$ if and only if
\begin{align*}
&\lim_{n\to\infty}\int_{\Rd}f(x)\int_{\Act}g(x,\zeta)v_{n}(x)(d \zeta)d x \nonumber\\
& = \int_{\Rd}f(x)\int_{\Act}g(x,\zeta)v(x)(d \zeta)d x
\end{align*}
for all $f\in L^1(\Rd)\cap L^2(\Rd)$ and $g\in \cC_b(\Rd\times \Act)$\,. It is well known that under the hypotheses \hyperlink{A1}{{(A1)}}--\hyperlink{A2}{{(A2)}}, for any admissible control \cref{E1.1} has a unique strong solution \cite[Theorem~2.2.4]{ABG-book}, and under any stationary Markov strategy \cref{E1.1} has a unique strong solution which is a strong Feller (therefore strong Markov) process \cite[Theorem~2.2.12]{ABG-book}.

\begin{itemize}
\item[\hypertarget{A3}{{(A3)}}]
The \emph{running cost} function $c\colon\Rd\times\Act \to \RR_+$ is bounded and Lipschitz continuous in $(x, \zeta)$, i.e., for some positive constant $K_c$, we have 
\begin{align*}
\abs{c(x,\zeta_1) - c(y, \zeta_2)}^2 \,\le\, K_c\,\left(\abs{x-y}^2 + \abs{\zeta_1- \zeta_2}^2\right)\,,
\end{align*}
for all $x,y\in \Rd$ and $\zeta_1, \zeta_2\in\Act$\,.
\end{itemize}

We extend $c\colon\Rd\times\pV \to\RR_+$ as follows: for $\pv \in \pV$
\begin{equation*}
c(x,\pv) := \int_{\Act}c(x,\zeta)\pv(d\zeta)\,.
\end{equation*}
\textbf{Discounted Cost Criterion:} For $U \in\Uadm$, the associated \emph{$\alpha$-discounted cost} is given by
\begin{equation}\label{EDiscost}
\cJ_{\alpha}(x, c,U) \,\df\, \Exp_x^{U} \left[\int_0^{\infty} e^{-\alpha s} c(X_s, U_s) d s\right],\quad x\in\Rd\,,
\end{equation} where $\alpha > 0$ is the discount factor
and $X_{\cdot}$ is the solution of \cref{E1.1} corresponding to $U\in\Uadm$ and $\Exp_x^{U}$ is the expectation with respect to the law of the process $X_{\cdot}$ with initial condition $x$. Here the controller tries to minimize \cref{EDiscost} over the set of admissible controls $U\in \Uadm$\,.
A control ${U}^{*}\in \Uadm$ is said to be an optimal control if for all $x\in \Rd$ 
\begin{equation}\label{OPDcost}
\cJ_{\alpha}(x, c,U^*) = \inf_{U\in \Uadm}\cJ_{\alpha}(x, c,U) \,\,\, (=: \,\, J_{\alpha}^*(x,c))\,.
\end{equation}\\ \\
\textbf{Ergodic Cost Criterion:} For $U\in\Uadm$, the associated ergodic cost is defined as 
\begin{equation}\label{ECost1}
\sE_{x}(c, U) = \limsup_{T\to \infty}\frac{1}{T}\Exp_x^{U}\left[\int_0^{T} c(X_s, U_s) d{s}\right]\,.
\end{equation} and the optimal value is defined as
\begin{equation}\label{ECost1Opt}
\sE^*(c) \,\df\, \inf_{x\in\Rd}\inf_{U\in \Uadm}\sE_{x}(c, U)\,.
\end{equation}
Then a control ${U}^*\in \Uadm$ is said to be optimal if we have 
\begin{equation}\label{ECost1Opt1}
\sE_{x}(c, {U}^*) = \sE^*(c)\quad \text{for all} \,\,\, x\in \Rd\,.
\end{equation}
Associated to the controlled diffusion model \cref{E1.1}, we define a family of operators $\sL_{\zeta}$ mapping $\cC^2(\Rd)$ to $\cC(\Rd)$ by
\begin{equation}\label{E-cI}
\sL_{\zeta} f(x) \,\df\, \trace\bigl(a(x)\grad^2 f(x)\bigr) + \,b(x,\zeta)\cdot \grad f(x)\,, 
\end{equation}
for $\zeta\in\Act$, \,\, $f\in \cC^2(\Rd)\cap\cC_b(\Rd)$\,.
For $\pv \in\pV$ we extend $\sL_{\zeta}$ as follows:
\begin{equation}\label{EExI}
\sL_\pv f(x) \,\df\, \int_{\Act} \sL_{\zeta} f(x)\pv(d \zeta)\,.
\end{equation} For $v \in\Usm$, we define
\begin{equation}\label{Efixstra}
\sL_{v} f(x) \,\df\, \trace(a\grad^2 f(x)) + b(x,v(x))\cdot\grad f(x)\,.
\end{equation}

\subsection{Main Results and the Learning Algorithms}\label{mainResCont}

We have four main contributions:

\begin{itemize}
\item[(i)] [Approximation of a Controlled Diffusion by a Finite MDP for Discounted Cost] We show that an optimal  solution of a finite model MDP which approximates a discrete-time approximation of a controlled diffusion (on the whole space) process is near optimal for the diffusion under a discounted cost criterion. This is studied in Section \ref{sectionDiscFM}.
\item[(ii)] [Approximation of a Controlled Diffusion by a Finite MDP for Average Cost] We show that for sufficiently small discounting, an optimal policy (Theorem \ref{TErgoOptNear}) or, more consequentially, a near optimal policy (Theorem \ref{NearOptDisErgo1}) for the discounted cost criterion is near optimal for the average cost criterion. While this result is well-established for finite MDPs due to Blackwell \cite{derman1970finite}\cite{blackwell1962discrete} and only recently studied in Borel MDPs \cite[Theorem 5]{creggZeroDelayNoiseless}, a continuous-time counterpart does not exist to our knowledge. Accordingly, we show that an optimal  solution of a finite model MDP which approximates a discrete-time approximation of a controlled diffusion process (defined on the whole space) is near optimal for the diffusion under the average cost criterion. This is studied in Section \ref{sectionErgFM}
\item[(iii)] Building on (i) above, we show that the policy obtained by the learning algorithm given in Algorithm \ref{disc_alg} leads to a near optimal policy. We show that the algorithm converges and the limit is near optimal.
\item[(iv)] By showing that for sufficiently small discounting, a near optimal policy for the discounted criterion is near optimal for the average cost problem, we show that the algorithm also can be used to arrive at near optimal policies for average cost.
\item[(v)] The proposed algorithm is model-free (that is, with dynamics unknown) and near optimality results impose no a-priori restrictions on control policies except for admissibility.    
\end{itemize}
\subsection*{Notation:}
\begin{itemize}
\item For any set $A\subset\RR^{d}$, by $\uptau(A)$ we denote \emph{first exit time} of the process $\{X_{t}\}$ from the set $A\subset\RR^{d}$, defined by
\begin{equation*}
\uptau(A) \,\df\, \inf\,\{t>0\,\colon X_{t}\not\in A\}\,.
\end{equation*}
\item $\sB_{r}$ denotes the open ball of radius $r$ in $\RR^{d}$, centered at the origin,
\item $\uptau_{r}$, $\uuptau_{r}$ denote the first exist time from $\sB_{r}$, $\sB_{r}^c$ respectively, i.e., $\uptau_{r}\df \uptau(\sB_{r})$, and $\uuptau_{r}\df \uptau(\sB^{c}_{r})$.
\item By $\trace S$ we denote the trace of a square matrix $S$.
\item For any domain $\cD\subset\RR^{d}$, the space $\cC^{k}(\cD)$ ($\cC^{\infty}(\cD)$), $k\ge 0$, denotes the class of all real-valued functions on $\cD$ whose partial derivatives up to and including order $k$ (of any order) exist and are continuous.
\item $\cC_{\mathrm{c}}^k(\cD)$ denotes the subset of $\cC^{k}(\cD)$, $0\le k\le \infty$, consisting of functions that have compact support. This denotes the space of test functions.
\item $\cC_{b}(\Rd)$ denotes the class of bounded continuous functions on $\Rd$\,.
\item $\cC^{k}_{0}(\cD)$, denotes the subspace of $\cC^{k}(\cD)$, $0\le k < \infty$, consisting of functions that vanish in $\cD^c$.
\item $\Lp^{p}(\cD)$, $p\in[1,\infty)$, denotes the Banach space
of (equivalence classes of) measurable functions $f$ satisfying
$\int_{\cD} \abs{f(x)}^{p}\,d{x}<\infty$.
\item $\Sob^{k,p}(\cD)$, $k\ge0$, $p\ge1$ denotes the standard Sobolev space of functions on $\cD$ whose generalized derivatives up to order $k$ are in $\Lp^{p}(\cD)$, equipped with its natural norm (see, \cite{Adams})\,.
\item  If $\mathcal{X}(Q)$ is a space of real-valued functions on $Q$, $\mathcal{X}_{\mathrm{loc}}(Q)$ consists of all functions $f$ such that $f\varphi\in\mathcal{X}(Q)$ for every $\varphi\in\cC_{\mathrm{c}}^{\infty}(Q)$. In a similar fashion, we define $\Sobl^{k, p}(\cD)$.
\end{itemize}

In the following, we present the algorithm. The rest of the paper will focus on the rigorous analysis on convergence and near optimality. 

\begin{algorithm}
\caption{Q-learning Algorithm for Controlled Diffusions via Quantized Q-learning for the Time-Discretized Process}
\label{disc_alg}
\begin{algorithmic}[1]
\STATE Choose a sampling interval $h > 0$.
\STATE Choose finite subsets $\mathds{X}_h \subset \mathds{X}$ and $\mathds{U}_h \subset \mathds{U}$.
\STATE Define a discretization mapping $\phi_\mathds{X}:\mathds{X} \to \mathds{X}_h$ (e.g., a nearest neighbour map).
\STATE Select a $\mathds{U}_h$-valued piecewise constant exploration process $\hat{u}(t)$.
\STATE Observe the process at discrete time steps and discretize it i.e. $\hat{X}_n:=\phi_{\mathds{X}}\left(X(h\times n)\right)$. For all $(\hat{x}, \hat{u}) \in \mathds{X}_h \times \mathds{U}_h$ update the Q values using the cost realization $c(X(k \times h), \hat{u})$:
    \begin{align}\label{Q_discounted}
    Q_{k+1}(\hat{x}, \hat{u}) = & (1 - \alpha_k(\hat{x}, \hat{u})) Q_k(\hat{x}, \hat{u}) \nonumber \\
    & + \alpha_k(\hat{x}, \hat{u}) \left( c(X(k \times h), \hat{u}) \cdot h + \beta_h \min_{v \in \mathds{U}_h} Q_k(\hat{X}_{k+1}, v) \right),
    \end{align}
    where $\beta_h = e^{-\beta \cdot h}$, and $\hat{X}_{k+1}$ is the sampled state observed after $\hat{X}_k = \hat{x}$.

\STATE We show that under suitable assumptions, the iterations $Q_k : \mathds{X}_h \times \mathds{U}_h \to \mathds{R}$ converge almost surely to some $Q^* : \mathds{X}_h \times \mathds{U}_h \to \mathds{R}$.  We will show that the limit values are the $Q$-values of a finite controlled Markov chain. Define the policy $\gamma_h : \mathds{X}_h \to \mathds{U}_h$ by
\[
\gamma_h(\hat{x}) = \arg\min_{\hat{u} \in \mathds{U}_h} Q^*(\hat{x}, \hat{u}).
\]
\STATE Define the control process $U_h(t)$ as
\[
U_h(t) = \gamma_h\left( \phi_\mathds{X}(X(i \cdot h)) \right), \quad \text{for } t \in [i \cdot h, (i+1) \cdot h),
\]
i.e., $U_h$ is a piecewise constant process changing value at sampling instances according to the learned map $\gamma_h$.
\end{algorithmic}
\end{algorithm}


\section{Planning I: Discretizing Time and Near Optimality of Discrete-Time Approximate Solutions}\label{disc_time_model}

\subsection*{Approximating Controlled Markov Chains:}
Let $\Rd$ be the state space, and $\Act$ be the control action space of the controlled Markov chain. 

We define the transition probabilities as follows: For any $k\in\ZZ_{+}$,  distribution of the state $X_k^h$ conditioned on the past state and action variables, is determined by the diffusion process (\ref{E1.1}), such that, conditioned on $(X_{k-1}^h,\dots,X_0^h,U^h_{k-1},\dots,U_0^h)$, $X_k^h$ has the same distribution as 
\begin{align}\label{sampled_MDP}
X_{kh} &= X_{k-1}^h +\int_{(k-1)h}^{kh}b(X_s,U^h(s))ds \nonumber\\
& \quad +\int_{(k-1)h}^{kh}\upsigma(X_s)dW_s
\end{align} 
where $U^h(s)=U_{k-1}^h$ for all $(k-1)h \leq s < kh$ (that is the control $U^h$ is piecewise constant in time). Hence, for any $A\in\cB(\Rd)$
\begin{align*}
Pr(X_k^h\in A|X^h_{[0,k-1]},U^h_{[0,k-1]})=\mathcal{T}_h(A|X^h_{k-1},U^h_{k-1})
\end{align*}
where ${X^h}_{[0,k-1]},{U^h}_{[0,k-1]}:=X_0^h,\dots,X^h_{k-1},U^h_0,\dots,U^h_{k-1}$, such that 
\begin{align}\label{disc_time_kernel}
X_k^h \sim \mathcal{T}_h(dx_k|X^h_{k-1},U^h_{k-1})
\end{align}
where $X_k^h$ determined by (\ref{sampled_MDP}) and where  $\mathcal{T}_h$ is the transition kernel of the Markov chain which is a stochastic kernel from $\Rd\times
\Act$ to $\Rd$.

For the discrete-time model, an {\em admissible policy} is a sequence of control functions $\{\gamma_k,\, k\in \ZZ_{+}\}$ such that $\gamma_k$ is measurable with respect to the $\sigma$-algebra generated by the information variables
$
I_k^h=\{X_{[0,k]}^h,U_{[0,k-1]}^h\},\,\, k \in \NN,\,\, I_0^h=\{X_0^h\},
$ that is
\begin{equation}
\label{eq_control}
U_k^h=\gamma_k(I_k^h),\quad k\in \ZZ_{+}.\nonumber
\end{equation}
 We define $\Gamma$ to be the set of all such admissible policies. 

We also define a stage-wise cost function $c_h:\Rd\times\Act\to \RR_+$ such that for any $(x,\zeta)\in\Rd\times\Act$
\begin{align*}
c_h(x,\zeta):=c(x,\zeta)\times h
\end{align*} where $c$ is the cost function of the diffusion model\,. We are interested in the following cost evaluation criteria.\\
\textbf{Discrete-time Discounted Cost:} For each $\gamma\in \Gamma$, the associated discounted cost of the approximating discrete-time model is given by
\begin{equation}\label{E2.1APM}
\cJ_{\alpha,h}(x, c,\gamma) \,\df\, \Exp_x^{\gamma} \left[\sum_{k=0}^{\infty} \beta^k c_h(X_k^h, U_k^h) | X_{0}^h = x \right]\,,
\end{equation} for $x\in\Rd$, where $\beta \df e^{-\alpha h}$\,. The optimal cost is defined as
\begin{equation}\label{E2.1APM1}
\cJ_{\alpha,h}^{*}(x, c) \,\df\, \inf_{\gamma\in \Gamma}\cJ_{\alpha,h}(x, c,\gamma)\,.
\end{equation}
\textbf{Discrete-time Ergodic Cost:} For each $\gamma\in\Gamma$, the associated infinite horizon average cost function is defined as
\begin{align}\label{MDP1_cost}
\sE_h(x,\gamma):=\limsup_{N\to\infty}\frac{1}{Nh}\Exp_{x}^{\gamma}\left[\sum_{k=0}^{N-1} c_h(X_k^h,U_k^h)\right]
\end{align}
The optimal ergodic cost function is defined as
\begin{align}\label{MDP1_optcost}
\sE_h^*(x):= \inf_{\gamma\in\Gamma} \sE_h(x,\gamma)\,.
\end{align}
\begin{remark}\label{R1}
An important property of the MDP model we will make use of is the following one: suppose that we are given an admissible policy $\gamma\in\Gamma$ defined for the MDP, we define the following continuous-time interpolated control process $U^{h}(\cdot)$ defined as
\begin{align}\label{pccontrol}
U^{h}(t)= \gamma(X^h_{k}) \text{ for } t\in[kh, (k+1)h)
\end{align}
which is a piecewise constant control process. Then, the controlled Markov chain state process $X_k^h$ under the policy $\gamma$ and the controlled diffusion process $\tilde{X}^h_t$ under the control process defined in (\ref{pccontrol}) have the same distributions at the sampling instances if they start from the same initial points, that is, for any $k\in \ZZ_{+}$, $$X_k^h\sim \tilde{X}^h_{kh}\,.$$
\end{remark}
For any $\gamma\in \Gamma$ the interpolated continuous-time policy $U^h(\cdot)$ is defined as in (\ref{pccontrol}). The set of all interpolated continuous-time admissible policies is denoted by $\bar{\Uadm}^h$ and the set of all interpolated continuous-time stationary Markov strategies is denoted by $\bar{\Uadm}_{\mathsf{sm}}^h$\,. Also, for any discrete-time Markov chain $\{X^h_k\}$, the associated continuous-time interpolated process $X^{h}(\cdot)$ is given by
\begin{align}\label{pcState}
X^{h}(t)= X^h_k \text{ for } t\in[kh, (k+1)h)\,.
\end{align} Thus, the sample paths of the continuous-time interpolated process $X^{h}(\cdot)$ are right continuous with left limits. This leads us to consider the function space 
\begin{align*}
\sD(\Rd; 0, \infty)& \df \{\omega:[0, \infty)\to \Rd \mid \omega(\cdot) \,\,\text{is right} \nonumber\\
&\text{continuous and have left limits at every}\,\, t > 0\}\,.
\end{align*} We will consider the space $\sD(\Rd; 0, \infty)$ endowed with the Skorokhod topology (for details see \cite[Section~16, p. 166]{PBill-book})\,.

Let $\cC_{BL}(\Rd)$ be the space of bounded Lipschitz continuous functions on $\Rd$, i.e., 
\begin{align}\label{BoundLips}
\cC_{BL}(\Rd) & \df \{f\in \cC(\Rd): f \,\,\text{is Lipschitz continuous and}\nonumber\\
&\,\,\norm{f}_{BL}\df\sup_{x\neq y}\frac{|f(x) - f(y)|}{|x-y|}  + \sup_{x\in \Rd} |f(x)| < \infty\}\,. 
\end{align}

In order to utilize the weak convergence technique, we introduce the relaxed control representation of the control policies (for more details see \cite[Section~9.5]{KD92})\,. 
Let $\fB(\Act\times [0, \infty))$ be the $\sigma$-algebra of Borel subsets of $\Act\times [0, \infty)$\,. Then for any Borel measure $\hat{m}$ on $\fB(\Act\times [0, \infty))$, satisfying $\hat{m}(\Act\times [0, t]) = t$ for all $t\geq 0$, it is easy to see that there exists a measure $\hat{m}_{t}$ on $\fB(\Act)$ such that $\hat{m}(d\zeta, d t) = \hat{m}_t(d\zeta) d t$\,. Let 
\begin{align*}
&\fM(\infty) \df \{\hat{m}: \hat{m}\,\,\text{is a Borel measure on}\,\,\fB(\Act\times [0, \infty))\nonumber\\
&\,\,\text{satisfying}\,\, \hat{m}(\Act\times [0,t]) = t, \text{for any}\,\, t\geq  0,\,\, \text{with}\,\, \hat{m}_t\in \cP(\Act)\}\,
\end{align*} The space $\fM(\infty)$ can be metrized by using the Prokhorov metric over $\cP(\Act\times [0, n])$ for $n\in\NN$\,. A sequence $\hat{m}^{n}$ converges to $\hat{m}$ in $\fM(\infty)$, if the normalized restriction of $\hat{m}^{n}$ converges weakly to the normalized restriction $\hat{m}$ on $\cP(\Act\times [0, n])$ for each $n\in \NN$\,. In particular, a sequence $\hat{m}^{n}$ converges to $\hat{m}$ in $\fM(\infty)$ if for any $\phi\in \cC_c(\Act\times [0, \infty))$ we have (this is known as compact weak topology)
$$\int_{\Act\times [0, \infty)} \phi(\zeta, t)\hat{m}^{n}(d \zeta, d t)\to \int_{\Act\times [0, \infty)} \phi(\zeta, t)\hat{m}(d \zeta, d t)\,.$$

Since $\cP(\Act\times [0, n])$ is complete, separable and compact for each $n\in\NN$, the space $\fM(\infty)$ inherits those properties\,. 

Now, define
\begin{align*}
\cM(\infty) =& \bigg\{m: m\,\,\text{is a random measure taking values in}\nonumber\\
&\,\, \fM(\infty)\,\, \text{satisfying}\,\,\text{for any}\,\,0\leq s <t < \infty,\nonumber\\
&\,\,  m_{[0,s]}\,\,\text{is independent of}\,\, W_t - W_s\,\bigg\}\,.
\end{align*} Since the range space $\fM(\infty)$ is compact \footnote{ By a diagonal argument, every sequence would have a converging subsequence because every compact restriction has a convergent subsequence; thus, the space is compact because it is a metric space.}, we have that any sequence in $\cM(\infty)$ is tight\,.   

\subsection{Discounted Cost}

In this subsection, we present discrete-time approximation results for the discounted cost criterion.

\begin{theorem}\label{TD1.3}\cite[Theorem 4.4]{pradhan2025discrete}
Suppose that Assumptions \hyperlink{A1}{{(A1)}}--\hyperlink{A2}{{(A2)}} hold. Then we have
\begin{equation}
\lim_{h\to 0}\cJ_{\alpha}(x, c,U_h^*) = \cJ_{\alpha}^{*}(x, c)\quad\text{a.e.}\,\,\, x\in \Rd\,.
\end{equation}
\end{theorem}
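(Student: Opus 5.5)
We interpret $U_h^*$ as the piecewise constant control obtained, as in Remark~\ref{R1}, by interpolating an optimal stationary policy $\gamma_h^*$ of the discrete-time discounted problem \eqref{E2.1APM1}. The aim is to show $\cJ_{\alpha}(x,c,U_h^*) = \cJ^*_{\alpha,h}(x,c) + o(1)$ and $\cJ^*_{\alpha,h}(x,c)\to \cJ_{\alpha}^*(x,c)$.

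\textbf{Step 1: existence of $\gamma_h^*$.} Under (A1)--(A2) the kernel $\mathcal{T}_h$ is weakly continuous in $(x,\zeta)$, and in fact strong Feller in $x$. Together with the bounded continuous stage cost $c_h$ and compact $\Act$, this lets the measurable selection/discounted DP theory give an optimal deterministic stationary $\gamma_h^*$. Its value $\cJ^*_{\alpha,h}$ is the fixed point of the Bellman operator.

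\textbf{Step 2: comparing the discrete-time cost with the diffusion cost.} By Remark~\ref{R1}, the chain under $\gamma_h^*$ and the diffusion $\tilde X^h$ under $U_h^*$ agree in law at the sampling times. Within each interval, Lipschitz continuity of $c$ (A3) and the moment bound $\Exp|X_t-X_{kh}|^2\le Ch$, which holds uniformly because $b$ and $\upsigma$ are bounded, give
\[
\Big|\Exp\int_{kh}^{(k+1)h} e^{-\alpha s}c(X_s,U_s)\,ds - \beta^k h\,\Exp\, c(X_{kh},U_{kh})\Big|\le C\beta^k h^{3/2}.
\]
Summing over $k$ yields $|\cJ_\alpha(x,c,U_h^*)-\cJ^*_{\alpha,h}(x,c)|\le C\sqrt h$. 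The same estimate holds for every interpolated policy in $\bar\Uadm^h$. Hence $\cJ^*_{\alpha,h}(x,c) = \inf_{U\in\bar\Uadm^h}\cJ_\alpha(x,c,U)+O(\sqrt h)\ge \cJ^*_\alpha(x,c)-C\sqrt h$, which is the lower bound.

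\textbf{Step 3: upper bound, the main obstacle.} It remains to show that piecewise constant controls are asymptotically as good as arbitrary admissible ones. I would take an optimal stationary Markov control $v^*\in\Usm$ for the diffusion. Its existence and its characterization through the HJB equation $\min_\zeta[\sL_\zeta V+c]=\alpha V$, with $V\in\Sobl^{2,p}$, follow from nondegeneracy (A2). I would then approximate $v^*$ by controls that are constant on $[kh,(k+1)h)$, for example $U(t)=v^*(X_{kh})$, after first approximating $v^*$ in the Borkar topology by continuous policies, which are dense.

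To pass to the limit I would use the relaxed control representation $\cM(\infty)$. The pairs (state process, relaxed control) are tight in $\sD(\Rd;0,\infty)\times\fM(\infty)$. Any weak limit solves the martingale problem for $\sL$ with the limiting relaxed control. Continuity of $\cJ_\alpha$ under this convergence, which follows from boundedness of $c$ and discounting, then gives $\limsup_h \cJ^*_{\alpha,h}\le \cJ_\alpha(x,c,v^\epsilon)\le \cJ^*_\alpha+\epsilon$.

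The delicate point is that $v^*$ is only measurable, so $v^*(X_{kh})$ need not converge to $v^*(X_t)$. This is where the approximation by continuous policies is needed. Convergence of the costs under Borkar-topology convergence holds for all $x$, or at least a.e. $x$, thanks to the Harnack/strong Feller regularity coming from (A2), and this is consistent with the a.e. qualifier in the statement.

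Combining Steps 2 and 3 gives $\cJ_\alpha(x,c,U_h^*)\to\cJ^*_\alpha(x,c)$.
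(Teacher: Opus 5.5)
Your argument is sound in outline, but there is no in-paper proof to match it against. The statement is imported from \cite{pradhan2025discrete}. The relaxed-control and Skorokhod machinery of Section~\ref{disc_time_model} is set up for that reference's weak-convergence argument but is never used here. The nearby in-paper reasoning, Proposition~\ref{time_app_thm} and the Bellman-contraction bound in the proof of Proposition~\ref{constant_policy}, hands the step $\cJ^h_\alpha\to\cJ^*_\alpha$ back to this very theorem.

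Your Step~2 is that in-paper estimate, done policy by policy rather than by comparing two Bellman fixed points on the common kernel $\mathcal{T}_h$. It also shows that it does not matter whether $U_h^*$ is read as the interpolated optimal policy of the $c_h$-model or as an optimal piecewise-constant control. Your Step~3 then proves the delegated half yourself.

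The gain of your route is structural. Because the discrete model is the diffusion sampled exactly, every interpolated discrete policy lies in $\Uadm$, so the lower bound is free and all the content is an upper bound. For that bound, weak convergence is needed only for one fixed continuous feedback sampled on the grid. Tightness in $\cC([0,\infty);\Rd)$ and well-posedness of the martingale problem for $\sL_{v^\epsilon}$ suffice, and the compactification $\fM(\infty)$ is not needed. You also get convergence for every $x$, not just a.e., because continuity of $v\mapsto\cJ_\alpha(x,c,v)$ in the Borkar topology holds pointwise. This follows from uniform $\Sobl^{2,p}$ bounds plus uniqueness of bounded solutions of $\sL_v\varphi+c(\cdot,v(\cdot))=\alpha\varphi$. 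What you do not get is a rate for the upper bound.

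One step does not go through as written. In the paper $\Usm$ consists of $\pV$-valued maps, so a continuous $v^\epsilon\in\Usm$ sampled as $v^\epsilon(X_{kh})$ is a relaxed action frozen on $[kh,(k+1)h)$. This is not an action of the discrete-time model, whose actions lie in $\Act$. The resulting one-step law is in general not the mixture $\int_{\Act}\mathcal{T}_h(\cdot|x,\zeta)\,v^\epsilon(x)(d\zeta)$: the drift is averaged along the interval, not randomized once at its start. So the inequality $\cJ^*_{\alpha,h}(x,c)\le\cJ_{\alpha,h}(x,c,\gamma^\epsilon)$, on which your upper bound rests, is unavailable.

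Use $\Act$-valued approximants instead.
\begin{itemize}
\item Take an optimal deterministic $v^*=\delta_{f^*}$, a measurable selector of the minimizer in the discounted HJB equation.
\item Set $f_n=f^*\ast\rho_n$ for a mollifier $\rho_n$. By convexity of $\Act$ each $f_n$ is $\Act$-valued, and each $f_n$ is Lipschitz.
\item Since $f_n\to f^*$ a.e., dominated convergence gives $\delta_{f_n}\to\delta_{f^*}$ in the Borkar topology.
\end{itemize}
This is where convexity of $\Act$ is needed in your route. With Lipschitz $f_n$ you can even drop weak convergence. Drive the sampled and unsampled systems with the same $W$; Gronwall then gives $\Exp\sup_{t\le T}|X^h_t-X_t|^2\le C_{T,n}\,h$, and the discount handles $[T,\infty)$.
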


\subsection{Ergodic Cost}

In this subsection, we present discrete-time approximation results for the ergodic cost criterion. For this cost criterion we will study the near-optimality problem under the following Lyapunov stability assumption:
\begin{itemize}
\item[\hypertarget{A4}{{(A4)}}] There exist positive constants $C_0, C_1$, a compact set $\sK$ and an inf-compact function $\Lyap\in \cC^{2}(\Rd)$ (i.e., the sub-level sets $\{\Lyap\leq k\}$ are compact or empty sets in $\Rd$\,, for each $k\in\RR$) such that for all $(x,\zeta)\in \Rd\times \Act$, we have 
\begin{align}\label{Lyap1}
\sL_{\zeta}\Lyap(x) \leq C_{0} I_{\{\sK\}}(x) -  C_1\Lyap(x)\,.
\end{align}
\end{itemize}

We note that the dissipativity condition (H2) in\cite{bayraktar2017ergodicity} implies our Assumption \hyperlink{A4}{{(A4)}}.

In view of the above Lyapunov stability assumption, combining \cite[Theorem~3.7.11]{ABG-book} and \cite[Theorem~3.7.12]{ABG-book}, we have the following complete characterization of the ergodic optimal control.
\begin{theorem}\label{TErgoOpt1}
Suppose that assumptions \hyperlink{A1}{{(A1)}}--\hyperlink{A4}{{(A4)}} hold. Then the ergodic HJB equation
\begin{equation}\label{EErgoOpt1A}
\rho = \min_{\zeta \in\Act}\left[\sL_{\zeta}V^*(x) + c(x, \zeta)\right]
\end{equation} admits a unique solution $(V^*, \rho)\in \cC^2(\Rd)\cap \sorder(\Lyap)\times \RR$ satisfying $V^*(0) = 0$\,.
Moreover, we have
\begin{itemize}
\item[(i)]$\rho = \sE^*(c)$
\item[(ii)] a stationary Markov control $v^*\in \Usm$ is an optimal control (i.e., $\sE_x(c, v^*) = \sE^*(c)$) if and only if it satisfies
\begin{equation}\label{EErgoOpt1B}
\min_{\zeta \in\Act}\left[\sL_{\zeta}V^*(x) + c(x, \zeta)\right] \,=\, \trace\bigl(a(x)\grad^2 V^*(x)\bigr) + b(x,v^*(x))\cdot \grad V^*(x) + c(x, v^*(x))\,,\quad \text{a.e.}\,\, x\in\Rd\,.
\end{equation}
\item[(iii)] for any $v^*\in \Usm$ satisfying \cref{EErgoOpt1B}, we have
\begin{equation}\label{EErgoOpt1C}
V^*(x) \,=\, \lim_{r\downarrow 0}\Exp_{x}^{v^*}\left[\int_{0}^{\uuptau_{r}} \left( c(X_t, v^*(X_t)) - \sE^*(c)\right)d t\right] \quad\text{for all}\,\,\, x\in \Rd\,.
\end{equation}
\end{itemize} 
\end{theorem}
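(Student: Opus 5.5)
The plan is to obtain Theorem~\ref{TErgoOpt1} by the vanishing-discount method applied to the discounted HJB equations, and then to identify the limit with the optimal ergodic value through a verification argument based on It\^o--Krylov's formula. This is the route of \cite[Theorem~3.7.11, Theorem~3.7.12]{ABG-book}, and I would check that \hyperlink{A1}{(A1)}--\hyperlink{A4}{(A4)} place us in that setting.

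\textbf{Step 1: discounted problems.} Under \hyperlink{A1}{(A1)}--\hyperlink{A3}{(A3)}, $J^*_\alpha(\cdot,c)$ is the unique bounded solution in $\Sobl^{2,p}(\Rd)$, $p>d$, of
\[
\alpha J_\alpha = \min_{\zeta}\bigl[\sL_\zeta J_\alpha + c(\cdot,\zeta)\bigr].
\]
Set $\bar V_\alpha := J^*_\alpha - J^*_\alpha(0)$.

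\textbf{Step 2: uniform bounds.} The Lyapunov condition \eqref{Lyap1} gives uniform-in-policy bounds on the expected hitting times of a ball. Concretely, Dynkin's formula applied to $\Lyap$ yields
\[
\Exp^U_x[\uuptau_r] \le C(1+\Lyap(x))
\]
for all admissible $U$ and for $r$ large enough that $\sK\subset\sB_r$. Combined with Harnack's inequality on balls, which uses the nondegeneracy \hyperlink{A2}{(A2)}, this gives:
\begin{itemize}
\item boundedness of $\alpha J^*_\alpha(0)$;
\item locally uniform bounds on $\bar V_\alpha$ in $\Sob^{2,p}(\sB_R)$ for every $R>0$;
\item a global growth bound $|\bar V_\alpha| \le C(1+\Lyap)$.
\end{itemize}

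\textbf{Step 3: passage to the limit.} Along a subsequence $\alpha\to0$ we extract $\alpha J^*_\alpha(0)\to\rho$ and $\bar V_\alpha\to V^*$ weakly in $\Sobl^{2,p}$, hence in $\cC^{1}_{loc}$. The limit solves \eqref{EErgoOpt1A}. Elliptic regularity then upgrades the solution to $\cC^2$, since $c$ and $b$ are Lipschitz and the minimum of Lipschitz functions is Lipschitz. To obtain $V^*\in\sorder(\Lyap)$, I would replace $\Lyap$ by a slightly smaller Lyapunov function such as $\Lyap^{1-\epsilon}$, or equivalently use the fact that the bound from Step~2 holds with the hitting-time integral, which is $\sorder(\Lyap)$.

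\textbf{Step 4: verification, i.e. parts (i) and (ii).} For any admissible $U$, apply It\^o's formula to $V^*$ and use \eqref{EErgoOpt1A}:
\[
\Exp^U_x V^*(X_T) - V^*(x) \ge \Exp^U_x\int_0^T \bigl(\rho - c(X_s,U_s)\bigr)\,ds .
\]
Dividing by $T$, I need $\Exp^U_x V^*(X_T)/T\to0$. This follows from $V^*\in\sorder(\Lyap)$ together with
\[
\sup_T \Exp^U_x \Lyap(X_T) \le \Lyap(x)+C_0/C_1,
\]
which comes from \eqref{Lyap1} via Gronwall. Hence $\sE_x(c,U)\ge\rho$ for every admissible $U$. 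For a measurable selector $v^*$ attaining the minimum, the same computation holds with equality, giving $\rho=\sE^*(c)$ and optimality of $v^*$.

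For the converse in (ii), suppose $v$ is optimal but fails \eqref{EErgoOpt1B} on a set of positive measure. Then integrating the strictly positive defect
\[
\sL_vV^*+c(\cdot,v)-\rho > 0
\]
against the invariant measure $\mu_v$ gives $\sE(c,v)>\rho$. Here $\mu_v$ has a positive density by \hyperlink{A2}{(A2)}, and the integration is justified because $V^*\in\sorder(\Lyap)$. This contradicts optimality.

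\textbf{Step 5: representation (iii) and uniqueness.} Part (iii) follows from Dynkin's formula applied up to $\uuptau_r\wedge\uptau_R$, letting $R\to\infty$ using $\sorder(\Lyap)$ growth and uniform integrability of $\uuptau_r$, and then letting $r\downarrow0$ using $V^*(0)=0$ and continuity. Uniqueness follows because any solution in $\sorder(\Lyap)$ satisfies this same representation with the same $\rho=\sE^*(c)$.

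\textbf{Main obstacle.} I expect Step~2 to be the hardest: the uniform local Sobolev bounds on $\bar V_\alpha$, and in particular the $\sorder(\Lyap)$ growth estimate, which must control the oscillation of $J^*_\alpha$ uniformly in $\alpha$ on an unbounded domain. I would first try to bound
\[
|J^*_\alpha(x)-J^*_\alpha(0)| \le \|c\|_\infty\,\sup_U \Exp^U_x\bigl[\uuptau_r\bigr] + \mathrm{osc}_{\sB_r}J^*_\alpha ,
\]
and then control $\mathrm{osc}_{\sB_r}J^*_\alpha$ uniformly in $\alpha$ by Harnack's inequality.
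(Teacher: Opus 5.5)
Your proposal is correct and takes essentially the paper's route: the paper gives no independent proof but invokes \cite[Theorems~3.7.11 and~3.7.12]{ABG-book}, whose argument is exactly this vanishing-discount limit of $J^*_\alpha - J^*_\alpha(0)$ under uniform local $\Sob^{2,p}$ bounds, followed by an It\^o--Krylov verification and the $\sorder(\Lyap)$ stochastic representation (the paper itself reruns this limit in the proof of Theorem~\ref{TErgoOptNear}, taking the uniform bounds from \cite[Theorem~3.7.6]{ABG-book}). One step to tighten is uniqueness: for a second solution $\tilde V$, the representation (iii) runs along its own minimizing selector $\tilde v$, so you should add that $\tilde v$ is optimal and hence, by the ``only if'' part of (ii) applied to $V^*$, also a minimizing selector for $V^*$, which is what makes the two representations coincide.
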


The following theorem shows that any optimal control of the discrete-time model (\ref{MDP1_cost}) is near optimal for (\ref{ECost1}) (for $h$ small enough)\,.
\begin{theorem}\label{TNearOpt1A}\cite[Theorem 5.5]{pradhan2025discrete}
Suppose that Assumptions \hyperlink{A1}{{(A1)}}--\hyperlink{A4}{{(A4)}} hold. Then we have 
\begin{align}\label{ENearOpt1A}
\lim_{h\to 0} \sE(x_0, U_h^{*}) = \sE^*(x_0)\,.
\end{align}    
\end{theorem}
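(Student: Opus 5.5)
The plan is to compare the value of the discrete-time ergodic problem with $\rho=\sE^*(c)$ from Theorem~\ref{TErgoOpt1}, and to split the gap as
\begin{equation*}
\sE(x_0,U_h^*)-\sE^*(x_0) \;=\; \bigl[\sE(x_0,U_h^*)-\sE_h^*(x_0)\bigr] \;+\; \bigl[\sE_h^*(x_0)-\sE^*(x_0)\bigr].
\end{equation*}
Here $U_h^*$ is the piecewise constant interpolation, as in Remark~\ref{R1}, of an optimal stationary policy $\gamma_h^*$ of the discrete-time model (\ref{MDP1_cost}). The lower bound $\liminf_{h\to0}\sE(x_0,U_h^*)\ge\sE^*(x_0)$ is free, since $U_h^*\in\Uadm$. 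So everything reduces to showing $\limsup_{h\to0}\sE(x_0,U_h^*)\le \rho$.

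\textbf{Step 1: uniform stability.} Applying It\^o's formula to $e^{C_1 t}\Lyap(X_t)$ together with (\ref{Lyap1}) gives $\Exp_x^U[\Lyap(X_t)]\le e^{-C_1 t}\Lyap(x)+C_0/C_1$, uniformly over every admissible $U$, and in particular over all piecewise constant controls. Hence the sampled chains $\{X^h_k\}$ satisfy a discrete Foster--Lyapunov drift condition $\Exp[\Lyap(X^h_{k+1})\mid X^h_k=x]\le e^{-C_1 h}\Lyap(x)+C_0 h$, uniform in the action. Combined with (A2), which gives minorization on compacts, this yields:
\begin{itemize}
\item[(a)] existence of an optimal stationary $\gamma_h^*$ via the discrete ACOE;
\item[(b)] the ergodic cost of $U_h^*$ is a genuine limit, independent of $x_0$, equal to $\frac1h\int c_h\,d\mu_h$ for the invariant law $\mu_h$ of the sampled chain;
\item[(c)] tightness of $\{\mu_h\}_h$.
\end{itemize}
Moreover, because the costs are sampled, $\sE(x_0,U_h^*)-\sE_h(x_0,\gamma_h^*)$ is controlled by $\sup_{s\le h}\Exp|c(X_{kh+s},\cdot)-c(X_{kh},\cdot)|\le K_c\,O(\sqrt h)$, using the Lipschitz property in (A3) and the moment bound $\Exp|X_{kh+s}-X_{kh}|^2\le Ch$ under (A1). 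So the first bracket vanishes.

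\textbf{Step 2: upper bound on $\sE_h^*$.} To show $\limsup_h \sE_h^*\le\rho$, I would plug a near-optimal stationary policy into the discrete problem. Since $v^*$ from Theorem~\ref{TErgoOpt1} may be discontinuous, I would first approximate it in the Borkar topology by continuous (even Lipschitz) $v_\epsilon$. Continuity of the ergodic cost in the Borkar topology under (A4), from \cite{ABG-book}, then gives $\sE(c,v_\epsilon)\le\rho+\epsilon$. Next, sample $v_\epsilon$ at times $kh$ and hold it constant on each interval. The resulting $X$ is close to the diffusion under $v_\epsilon$ in $L^2$ on each $[0,T]$, by a Gronwall argument with the Lipschitz drift. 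Together with the uniform geometric ergodicity from Step 1, which lets me truncate time at a finite $T$, this gives $\sE_h(v_\epsilon\text{-sampled})\to\sE(c,v_\epsilon)$.

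\textbf{Step 3 and the main obstacle.} Steps 1 and 2 combined give $\limsup_h\sE(x_0,U_h^*)\le\rho+\epsilon$ for every $\epsilon$, which proves (\ref{ENearOpt1A}). I expect the main obstacle to be interchanging $h\to0$ with $T\to\infty$ in Step 2, together with the analogous interchange in bounding $\sE$ by $\sE_h$ in Step 1. This requires the convergence rates of the sampled chain to its invariant law to be uniform in $h$. I would try to obtain this from the $h$-uniform drift constants, via Harris' theorem in a weighted $\Lyap$-norm. The minorization would come from the Krylov--Safonov/Harnack estimates implied by (A2) on the compact set $\sK$, applied on a fixed time window of length $1$, i.e.\ $\lceil 1/h\rceil$ steps, so that the constants do not degenerate as $h\to0$.
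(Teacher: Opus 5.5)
The paper gives no argument for this statement; it imports it from \cite[Theorem 5.5]{pradhan2025discrete}. Your proposal is therefore an independent, self-contained route, and it is sound in outline. Its skeleton has three parts: the free lower bound; the sampling estimate $\abs{\sE(x_0,U)-\sE_h(x_0,\gamma)}\le C\sqrt h$; and an upper bound on $\sE_h^*(x_0)$, obtained by sampling a Lipschitz Borkar-approximant $v_\epsilon$ of $v^*$ and using continuity of $v\mapsto\sE_x(c,v)$ on $\Usm$. Together these reduce the statement to the ergodic behaviour of one fixed diffusion. Compared with invoking the citation, your argument shows exactly where ergodicity enters and gives an explicit $O(\sqrt h)$ rate for the first bracket. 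The second bracket gets no rate, since the approximation $v_\epsilon\to v^*$ in the Borkar topology carries none.

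\textbf{Step 1 is mostly unnecessary.} The sampling estimate uses only boundedness of $b,\upsigma$ and (A3). It therefore holds for every discrete-time policy, history-dependent ones included, uniformly in $N$. So Step 1 involves no interchange of limits, and (a)--(c) are never used; existence of $\gamma_h^*$ is presupposed by the statement.

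\textbf{The uniform-in-$h$ Harris theorem is harder than you suggest, and avoidable.} It is harder because under sampled feedback $X_t$ alone is not a Markov diffusion, so parabolic Harnack inequalities do not apply directly, and (A2) is only local. A block argument replaces it. Put $T_h:=h\lceil T/h\rceil$. Let $g^h(x)$ and $g(x)$ be the $T_h$-averaged expected costs from $x$, respectively under the sampled control $U^\epsilon_h$ of $v_\epsilon$ and under the diffusion $Y$ driven by $v_\epsilon$. Synchronous coupling and Gronwall give $\sup_x\abs{g^h(x)-g(x)}\le C_\epsilon\sqrt h\,e^{C_\epsilon T}$, since all constants are global. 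Since every $jT_h$ is a sampling instant,
\begin{equation*}
\frac{1}{JT_h}\Exp_{x_0}\int_0^{JT_h}c(X_s,U^\epsilon_h(s))\,ds=\frac1J\sum_{j<J}\Exp_{x_0}\bigl[g^h(X_{jT_h})\bigr].
\end{equation*}
Exponential ergodicity of the single diffusion $Y$ gives $\abs{g(x)-\sE(c,v_\epsilon)}\le C(1+\Lyap(x))/T$. Your control-uniform bound $\Exp\Lyap(X_t)\le\Lyap(x_0)+C_0/C_1$ makes this uniform in $j$. Letting $J\to\infty$, then $h\to0$, then $T\to\infty$ yields $\limsup_{h\to0}\sE(x_0,U^\epsilon_h)\le\sE(c,v_\epsilon)$. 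This needs no ergodic property of the sampled chains, and your chain of inequalities then closes as planned.
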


\section{Planning II: Discretization of Space and Finite State MDP Construction}\label{finiteMDP}

\subsection{ Regularity Properties for Near Optimality of Finite Model Approximations}

In this subsection, we establish continuity properties of the discrete-time approximations constructed in the previous section. 

Now that we have established near optimality of discrete-time approximations, if we can show weak Feller regularity of the discrete-time model, building on \cite[Theorem 7]{KSYContQLearning} (which in turn follows from \cite{SaYuLi15c}\cite[Theorem 4.27]{SaLiYuSpringer}), we can obtain a finite space Markov Decision Process whose solution will be near optimal for the controlled diffusion.

\begin{theorem}\label{regDiscAppr}
 Consider the discrete-time model with dynamics (\ref{sampled_MDP}). 
 \begin{itemize}
 \item[(i)] Under Assumptions \hyperlink{A1}{{(A1)}}--\hyperlink{A2}{{(A2)}}, the map
\[ \Rd \times \mathbb{U} \ni (x,\zeta) \mapsto \Exp[g(X^h_{k+1}) | X^h_{k}=x,U^h_{k}= \zeta ] \in \mathbb{R},\]
is continuous for every bounded continuous $g: \Rd \to \mathbb{R}$. That is, the discrete-time model (\ref{sampled_MDP}) is weakly continuous.
\item[(ii)] Under Assumption \hyperlink{A3}{{(A3)}}, the cost function $c_h(x,\zeta):= c(x, \zeta) h: \Rd \times \mathbb{U}  \to \mathbb{R}$ is continuous and bounded.
\end{itemize}
\end{theorem}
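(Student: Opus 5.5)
Part (ii) is immediate from \hyperlink{A3}{(A3)}: $c$ is bounded and Lipschitz, hence continuous, so $c_h = h\,c$ is bounded by $h\sup c$ and continuous on $\Rd\times\Act$. The substance is part (i). Our plan there is to show that the one-step solution map depends continuously on the initial data in $L^2$, and then pass to weak continuity of the kernel $\mathcal{T}_h$ through convergence in probability.

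Fix $(x_n,\zeta_n)\to(x,\zeta)$ in $\Rd\times\Act$. On a common probability space with a common Wiener process $W$, let $X^n_t$ and $X_t$, $t\in[0,h]$, be the strong solutions of
\begin{equation*}
dY_t = b(Y_t,\bar\zeta)\,dt + \upsigma(Y_t)\,dW_t
\end{equation*}
with $(Y_0,\bar\zeta)=(x_n,\zeta_n)$ and $(x,\zeta)$ respectively. The control is constant on the interval, so these are ordinary Lipschitz SDEs. They are well posed by \hyperlink{A1}{(A1)} (see \cite[Theorem~2.2.4]{ABG-book}), and by Remark~\ref{R1} their laws at time $h$ are exactly $\mathcal{T}_h(\cdot\mid x_n,\zeta_n)$ and $\mathcal{T}_h(\cdot\mid x,\zeta)$.

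Subtract the two equations, apply $(a+b+c)^2\le 3(a^2+b^2+c^2)$, then Cauchy--Schwarz for the drift integral and the Itô isometry for the stochastic integral. Using the joint Lipschitz bound in \hyperlink{A1}{(A1)}, this gives
\begin{equation*}
\Exp\abs{X^n_t-X_t}^2 \le 3\abs{x_n-x}^2 + 3C_0(h+1)\int_0^t\Bigl(\Exp\abs{X^n_s-X_s}^2 + \abs{\zeta_n-\zeta}^2\Bigr)ds .
\end{equation*}
Gronwall's inequality then yields
\begin{equation*}
\Exp\abs{X^n_h-X_h}^2 \le C(h)\bigl(\abs{x_n-x}^2+\abs{\zeta_n-\zeta}^2\bigr)\to 0 .
\end{equation*}
Hence $X^n_h\to X_h$ in probability.

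Now let $g$ be bounded and continuous. Every subsequence of $X^n_h$ has a further subsequence converging almost surely to $X_h$. Along it, $g(X^n_h)\to g(X_h)$ almost surely, and bounded convergence gives $\Exp g(X^n_h)\to \Exp g(X_h)$. Since the limit is the same for every subsequence, the whole sequence converges, which is the claimed continuity and hence weak continuity of (\ref{sampled_MDP}).

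We expect no real obstacle. The one point needing care is that the continuity is joint in $(x,\zeta)$, and this is exactly where the joint Lipschitz dependence of $b$ on $(x,\zeta)$ from \hyperlink{A1}{(A1)} enters. \hyperlink{A2}{(A2)} is not needed for this argument; it would only matter for the stronger strong-Feller property. If the paper's framework requires it, one can add that the conditional expectation given $X^h_k=x$ coincides with $\Exp g(X_h)$ started at $x$ by the Markov property of the strong solution.
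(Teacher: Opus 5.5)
Your proof is correct, but it takes a different route from the paper. The paper's proof is a one-line sketch. It asserts that the law of the solution on path space is weakly continuous in the control, and defers to the steps of \cite[Theorem 4.2]{PY25MckeanA} for the weak Feller property.

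You instead drive both one-step solutions with the same Wiener process. Gronwall then gives a strong $L^2$ stability bound at time $h$, and weak continuity of $\mathcal{T}_h$ follows from convergence in probability of $X^n_h$ to $X_h$.

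What your route buys:
\begin{itemize}
\item It is self-contained.
\item It is quantitative. Your estimate shows $\mathcal{T}_h(\cdot\mid x,\zeta)$ is Lipschitz in $(x,\zeta)$ in the Wasserstein-2 distance, with constant $\sqrt{C(h)}$.
\item It shows that \hyperlink{A2}{(A2)} plays no role in this step. Only the joint Lipschitz condition \hyperlink{A1}{(A1)} is used, and you identify exactly where the joint dependence on $(x,\zeta)$ enters.
\end{itemize}
Nondegeneracy only becomes essential later, in the paper's total-variation Lipschitz lemma, through the Gaussian density gradient bounds.

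What the path-space weak-convergence argument the paper alludes to would buy: in general it needs only continuity of $b$ in $\zeta$ together with uniqueness in law, rather than Lipschitz dependence on $\zeta$. It also adapts naturally to relaxed ($\pV$-valued) controls.

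Under the standing assumptions of this paper, your argument is the cleaner and more informative proof.
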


\textbf{Proof.} 
The probability measure on the continuous function space valued solution $X_t$ is weakly continuous in the control and following the same steps as in the proof of \cite[Theorem 4.2]{PY25MckeanA} the weak Feller property can be established.

\subsection{Finite MDP Construction}\label{fin_model_section}
Now that we have obtained an approximate model whose solution is near optimal, we can now apply the well established approximation theory building on \cite{KSYContQLearning,SaLiYuSpringer,SaYuLi15c}.

Building on \cite{SaYuLi15c}, we construct an MDP with a finite state space by dicretizing the state space $\mathds{X}=\mathbb{R}^d$. Due to the $\sigma$-compact nature, we can write $\mathds{X} = \cup_{k=1}^{\infty} B_k$ where each $B_k$ is compact. Let the quantizer be such that the $M^{th}$ bin be the over-flow bin; that is, the first $ M-1$ bins be the quantization of a compact set and the complement be assigned to $B_M$. To this end, let us define
\begin{align}\label{unif_loss-2}
L^-:=\max_{i=1,\dots,M-1}\sup_{x,x'\in B_i}\|x-x'\|.
\end{align}
Note that since $\mathds{X}$ is $\sigma$-compact, for each $M$, one can find a partition $\{B_i\}_{i=1}^M$ of the state space $\mathds{X}$ such that $L^- \rightarrow 0$ and $\bigcup_{i=1}^{M-1} B_i \nearrow \mathds{X}$ as $M \rightarrow \infty$. Note that $B_M = \mathbb{X} \setminus (\cup_{i=1}^{M-1} B_i)$. In the following result, we assume that such a sequence of partitions is used to obtain the finite-state approximate models.  

 We choose a representative state, $\hat{x}_i\in B_i$, for each disjoint set and we assume that each $B_i$ is compact. For this setting, we denote the new finite state space by
$\mathds{X}_M:=\{\hat{x}_1,\dots,\hat{x}_M\}$. The mapping from the original state space $\mathds{X}$ to the finite set $\mathds{X}_M$ is done via
\begin{align}\label{quant_map}
\phi_{\mathds{X}}(x)=\hat{x}_i \quad \text{ if } x\in B_i.
\end{align}
Furthermore, we choose a weight measure $\pi(\cdot)\in {\mathcal P}(\mathds{X})$ on $\mathds{X}$ such that $\pi(B_i)>0$ for all $i\in\{1,\dots,M\}$. We now define normalized measures using the weight measure on each separate quantization bin $B_i$ such that  
\begin{align}\label{norm_inv}
{\pi}_{i}(A):=\frac{\pi(A)}{\pi(B_i)}, \quad \forall A\subset B_i, \quad \forall i\in \{1,\dots,M\}
\end{align}
 that is ${\pi}_{i}$ is the normalized weight measure on the set $B_i$, $\hat{x}_i$ belongs to. 

We now define the cost and transition kernels for this finite set using the normalized weight measures such that for any $\hat{x}_i,\hat{x}_j\in \mathds{X}_M$
\begin{align}\label{finite_cost}
&C_h(\hat{x}_i,\zeta)=\int_{B_i}c_h(x,\zeta){\pi}_{i}(dx)\nonumber\\
&P_h(\hat{x}_j|\hat{x}_i,\zeta)=\int_{B_i}\mathcal{T}_h(B_j|x,\zeta){\pi}_{i}(dx).
\end{align}
where $\mathcal{T}_h$ is the transition model for the MDP constructed in Section \ref{disc_time_model}, see (\ref{disc_time_kernel}).

Having defined the finite state space $\mathds{X}_M$, the cost function $C_h$ and the transition model $P_h$, we can now introduce the optimal value function for this finite model. We denote the optimal cost function, which is defined on $\mathds{X}_h$, by $\hat{J}^M_h:\mathds{X}_M\to \mathds{R}$.

\subsection{Discounted Cost}\label{sectionDiscFM}

\subsubsection{Near Optimal Approximation of the Diffusion Process by a Finite MDP}\label{apprx}
We denote the optimal policy for the finite MDP constructed in Section \ref{fin_model_section} by $\gamma_h^M$, and we denote the corresponding control process obtained using $\gamma_h^M$ by $U_h^M$.

We are interested in the term
\begin{align*}
\cJ_{\alpha}(x_0, c,U^M_h) - \cJ^*_{\alpha}(x_0, c)
\end{align*}
that is how much we lose if we apply the control obtained from the finite MDP in the original diffusion process. 


We write the following
\begin{align}
\cJ_{\alpha}(x_0, c,U^M_h) - \cJ^*_{\alpha}(x_0, c)&=\cJ_{\alpha}(x_0, c,U^M_h) -\cJ_{\alpha,h}(x_0, c,\gamma^M_h)\label{term1}\\
&\quad+\cJ_{\alpha,h}(x_0,c,\gamma^M_h)-\cJ_{\alpha,h}^*(x_0,c)\label{term2}\\
&\quad+\cJ_{\alpha,h}^*(x_0,c)-\cJ^*_{\alpha}(x_0, c).\label{term3}
\end{align}

The first term above is the error between the discrete-time diffusion cost under $U^M_h$ and the controlled Markov chain under the same control (note that $U^M_h$ is induced by $\gamma^M_h$); the second term is the error between the controlled Markov chain under $\gamma^M_h$ and the optimal cost for the controlled Markov chain (that is, the loss due to finite model approximation), and finally the last term is the loss due to discrete-time approximation in terms of the difference between the value functions.

In what follows, we will analyze each term separately.

\subsubsection{Analysis of Term (\ref{term1})}
The following result provides a bound for the difference between the value function of a diffusion process controlled with piecewise constant control processes and  the value function of an MDP controlled with an admissible policy.

\begin{proposition}\cite[Proposition 3.1]{bayraktar2022approximate}\label{time_app_thm}
Consider a policy $\gamma_h$ for the discrete-time MDP constructed in Section \ref{disc_time_model} , and the corresponding control process  $U_h$ obtained using $\gamma_h$ such that
\begin{align*}
U_h(t)=\gamma_h(x(i\times h)), \text{ for } t\in[i\times h,(i+1)\times h).
\end{align*}
Under Assumption \hyperlink{A1}{{(A1)}}, we have that for any $x_0\in\mathds{X}$
\begin{align*}
\left|\cJ_{\alpha}(x_0, c,U_h) -\cJ_{\alpha,h}(x_0,c,\gamma_h)\right|\leq \frac{K h}{1-e^{-\alpha h}}\left(h+\sqrt{\frac{2h}{\pi}}\right).
\end{align*}
for some $K<\infty$ that depends on the uniform upper-bounds of the functions $b$ and $\upsigma$. In particular, $\left|\cJ_{\alpha}(x_0, c,U_h) -\cJ_{\alpha,h}(x_0,c,\gamma_h)\right|\to 0$ as $h\to0$.
\end{proposition}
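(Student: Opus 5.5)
The idea is to compare the two costs interval by interval along a single probability space. Under the control $U_h$, Remark \ref{R1} says the diffusion $X_t$ sampled at times $ih$ has the same law as the controlled chain $X^h_i$ under $\gamma_h$. So we may take $X^h_i = X_{ih}$, with both driven by the same Wiener process and the same piecewise constant control. Then
\begin{align*}
\cJ_{\alpha}(x_0,c,U_h)-\cJ_{\alpha,h}(x_0,c,\gamma_h) =\sum_{i=0}^{\infty}\Exp_{x_0}^{U_h}\left[\int_{ih}^{(i+1)h}\Bigl(e^{-\alpha s}c(X_s,U_h(ih)) - e^{-\alpha ih}c(X_{ih},U_h(ih))\Bigr)ds\right],
\end{align*}
since $\beta^i c_h(X^h_i,U^h_i)=\int_{ih}^{(i+1)h}e^{-\alpha ih}c(X_{ih},U_h(ih))\,ds$.

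On each interval I would split the integrand into two parts. The first is $e^{-\alpha s}\bigl(c(X_s,\cdot)-c(X_{ih},\cdot)\bigr)$. By the Lipschitz property \hyperlink{A3}{(A3)} it is bounded in absolute value by $\sqrt{K_c}\,e^{-\alpha ih}\,\Exp|X_s-X_{ih}|$. The second is $(e^{-\alpha s}-e^{-\alpha ih})\,c(X_{ih},\cdot)$, bounded by $\|c\|_\infty e^{-\alpha ih}\alpha(s-ih)$. That is $O(h)$ per unit time, giving $O(h^2)$ per interval.

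For the increment I would write $X_s-X_{ih}=\int_{ih}^s b\,dr+\int_{ih}^s\upsigma\,dW$. Boundedness of $b$ in \hyperlink{A1}{(A1)} makes the drift part at most $\|b\|_\infty(s-ih)\le \|b\|_\infty h$. For the martingale part, Itô isometry plus Jensen gives at most $\|\upsigma\|_\infty\sqrt{s-ih}$. The Gaussian constant $\sqrt{2h/\pi}$ in the statement suggests the authors instead compare $\Exp|\int\upsigma\,dW|$ with the mean absolute value of a Gaussian, $\Exp|\sigma W_h|=\sigma\sqrt{2h/\pi}$. Either way the increment is at most $C\bigl(h+\sqrt{h}\bigr)$, uniformly over $s$ in the interval.

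Integrating over an interval of length $h$ then gives a per-interval bound of $e^{-\alpha ih}\,K h\,(h+\sqrt{2h/\pi})$. Here $K$ collects $K_c$, $\|b\|_\infty$, $\|\upsigma\|_\infty$, $\alpha\|c\|_\infty$; the discount-mismatch term is absorbed into the $h\cdot h$ part. Summing the geometric series $\sum_i e^{-\alpha ih}=1/(1-e^{-\alpha h})$ yields the bound as stated. Since $1-e^{-\alpha h}\sim\alpha h$, the bound behaves like $(K/\alpha)(h+\sqrt{2h/\pi})\to0$.

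The main obstacle is getting exactly the constant $\sqrt{2h/\pi}$ for the stochastic integral when $\upsigma$ depends on the state. I would either settle for $\|\upsigma\|_\infty\sqrt{h}$ via Itô isometry and absorb the constant into $K$, or, following \cite{bayraktar2022approximate}, bound the mean absolute value of the stochastic integral by that of a Gaussian increment with variance $\|\upsigma\|_\infty^2 h$.
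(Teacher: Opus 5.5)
Your proof is correct. The coupling via Remark~\ref{R1}, the split of each interval's integrand into a Lipschitz term and a discount-mismatch term, the estimate $\Exp|X_s-X_{ih}|\le\|b\|_\infty h+\|\upsigma\|_\infty\sqrt{h}$, and the geometric sum $\sum_i e^{-\alpha ih}=(1-e^{-\alpha h})^{-1}$ together give exactly the stated bound.

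You are right that \hyperlink{A3}{(A3)} is needed, so $K$ also depends on $K_c$, $\|c\|_\infty$ and $\alpha$. The paper itself invokes (A1)--(A3) for this estimate in Proposition~\ref{constant_policy}. The constant $\sqrt{2h/\pi}$ is not an obstacle, because $K$ is generic, so absorbing $\sqrt{\pi/2}$ into it is legitimate. (The Gaussian constant can even be recovered for state-dependent $\upsigma$. For each component $M^j$ of the stochastic integral, apply Dambis--Dubins--Schwarz and then optional stopping of the submartingale $|\tilde B|$ at the bounded time $\langle M^j\rangle_t\le\|\upsigma\|_\infty^2 h$; this gives $\Exp|M^j_t|\le\|\upsigma\|_\infty\sqrt{2h/\pi}$.)

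The paper does not reprove this proposition; it imports it from \cite{bayraktar2022approximate}. Its one in-text derivation of the same bound, in the proof of Proposition~\ref{constant_policy}, is organized differently. It isolates the one-step error $\sup_{x,\zeta}\bigl|\Exp_x\bigl[\int_0^h e^{-\alpha s}c(X_s,\zeta)\,ds\bigr]-c(x,\zeta)h\bigr|\le Kh\bigl(h+\sqrt{2h/\pi}\bigr)$. Both Bellman operators share the kernel $\mathcal{T}_h$ and contract with modulus $e^{-\alpha h}$, so the sup-norm gap $D$ satisfies $D\le Kh(h+\sqrt{2h/\pi})+e^{-\alpha h}D$.

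Your interval-by-interval sum is that recursion unrolled along the trajectory: conditioning your $i$-th term on $\sF_{ih}$ bounds it by $e^{-\alpha ih}$ times the one-step error. The two routes buy different things.
\begin{itemize}
\item The contraction route is shorter and compares optimal values (infima) directly, which is what Proposition~\ref{constant_policy} needs. For a fixed policy, however, it requires a policy-evaluation fixed-point equation, and hence a stationary Markov $\gamma_h$.
\item Your route needs no such equation, so it works verbatim for history-dependent $\gamma\in\Gamma$. For that case you only need equality of the joint laws of $(X_0,X_h,X_{2h},\dots)$ and $(X^h_0,X^h_1,\dots)$, which follows from the Markov property of the SDE on each interval with frozen control. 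Remark~\ref{R1} as stated gives only equality of marginals, which is already enough for the Markov policy in the statement.
\end{itemize}
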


\subsubsection{Analysis of Term (\ref{term2})}
This term represents the performance loss due to space discretization for the discounted cost criteria. It can be shown to go to $0$ as $M\to\infty$ for weakly continuous transition kernels. Recall that by Theorem \ref{regDiscAppr} $\mathcal{T}_h(\cdot|x,\zeta)$ is weakly continuous in $(x,\zeta)$ and the cost function $c_h(x,\zeta)$ is continuous and bounded in $(x,\zeta)$. The following is then a direct consequence of \cite[Theorem 4.27]{SaLiYuSpringer}. Note that compactness of the state space is not required. 

\begin{theorem}
Under Assumptions \hyperlink{A1}{{(A1)}}--\hyperlink{A3}{{(A3)}}, with the discretization scheme described in Section \ref{fin_model_section}, we have that 
\begin{align*}
\cJ_{\alpha,h}(x_0,c,\gamma^M_h)-\cJ_{\alpha,h}^*(x_0, c) \to 0
\end{align*}
as $M\to 0$.
\end{theorem}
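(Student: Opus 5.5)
Since Theorem \ref{regDiscAppr} shows that the sampled model is weakly continuous with bounded continuous cost, the statement should follow from the general finite-model approximation result \cite[Theorem 4.27]{SaLiYuSpringer}, used as in \cite[Theorem 7]{KSYContQLearning}. The plan is to check its hypotheses for the discrete-time MDP $(\Rd,\Act,\mathcal{T}_h,c_h)$ with discount $\beta=e^{-\alpha h}<1$, and then run the three-step argument that result rests on. Here $M\to\infty$ is intended, with $L^-\to 0$ and $\bigcup_{i<M}B_i\nearrow\Rd$. For fixed $h$, Theorem \ref{regDiscAppr} gives weak continuity of $\mathcal{T}_h$ and boundedness and continuity of $c_h$. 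Compactness of $\Act$ together with these properties yields a continuous bounded optimal value $\cJ^*_{\alpha,h}$ and a measurable optimal selector via the discounted Bellman operator. The finite model $(\mathds{X}_M,P_h,C_h)$ of \eqref{finite_cost} is then exactly the construction of \cite{SaYuLi15c}.

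\textbf{Step 1.} Extend the finite-model value $\hat J^M_h$ to $\Rd$ by setting $\hat J^M_h(x):=\hat J^M_h(\phi_{\mathds{X}}(x))$. Show that $\hat J^M_h\to \cJ^*_{\alpha,h}$ uniformly on compact sets. The finite Bellman operator equals the original Bellman operator composed with the averaging $f\mapsto\int_{B_i}f\,d\pi_i$ on bins. Its fixed point can therefore be compared with $\cJ^*_{\alpha,h}$ through a contraction estimate. The error terms are oscillations of $c_h(\cdot,\zeta)+\beta\int f\,d\mathcal{T}_h(\cdot|\cdot,\zeta)$ over bins of diameter at most $L^-$. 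On any fixed compact set these vanish uniformly in $\zeta$ as $M\to\infty$, by weak continuity, compactness of $\Act$ and equicontinuity arguments. On the overflow bin $B_M$ the error is only bounded by $2\|c_h\|_\infty/(1-\beta)$, but $B_M$ escapes every compact set.

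\textbf{Step 2.} Show that the extended policy $\gamma^M_h\circ\phi_{\mathds{X}}$, applied to the original kernel, has cost converging to $\cJ^*_{\alpha,h}$. Write $\hat{T}^M$ for the Bellman operator of this policy in the original model and use
\[
\cJ_{\alpha,h}(\cdot,c,\gamma^M_h)-\cJ^*_{\alpha,h}=\sum_{k\ge0}\beta^k\bigl(\hat T^M\bigr)^k\bigl(\hat T^M\hat J^M_h-\hat J^M_h\bigr)+\bigl(\hat J^M_h-\cJ^*_{\alpha,h}\bigr).
\]
The one-step discrepancy $\hat T^M\hat J^M_h-\hat J^M_h$ is small on compacts by the same bin-oscillation argument as in Step 1, and it is uniformly bounded everywhere. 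It remains to control the mass that the chain started at $x_0$ puts outside a large ball within the effective horizon $k\lesssim\log\epsilon/\log\beta$.

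\textbf{Main obstacle.} The main obstacle is this non-compactness. The naive bound uses only weak continuity, which gives no uniform tightness. Here I would use the diffusion structure. Bounded $b,\upsigma$ under (A1) give $\Exp|X^h_k-x_0|^2\le C(kh+k^2h^2)$ uniformly over all policies. By Chebyshev, for any $\epsilon$ there is $R$ with $\Pr(|X^h_k|>R)<\epsilon$ for all $k\le N_\epsilon$ and all policies. This replaces the compactness used in \cite{SaYuLi15c}. Letting first $M\to\infty$ and then $R\to\infty$, $\epsilon\to0$ gives the claim.
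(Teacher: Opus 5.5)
Your argument is correct in outline, but it takes a different route from the paper. The paper's proof is a single step. Theorem~\ref{regDiscAppr} supplies weak continuity of $\mathcal{T}_h$ and continuity and boundedness of $c_h$. The claim (with $M\to\infty$, as you rightly read it) is then quoted from \cite[Theorem 4.27]{SaLiYuSpringer}, with the explicit remark that compactness of the state space is not needed.

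You instead reprove that theorem: first local uniform convergence of the finite-model value, then evaluation of $\gamma^M_h\circ\phi_{\mathds{X}}$ in the true model. You control the non-compact part with the diffusion-specific bound $\Exp|X^h_k-x_0|^2\le C(kh+k^2h^2)$, which comes from boundedness of $b$ and $\upsigma$. This gives a self-contained argument with a quantitative tail estimate, close in spirit to the rates the paper obtains later via Lemma~\ref{loss_bound_lemma}. But it uses more structure than necessary.

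In particular, your claim that weak continuity gives no uniform tightness is not correct. The map $(x,\zeta)\mapsto\mathcal{T}_h(\cdot|x,\zeta)$ is weakly continuous and $K\times\Act$ is compact for compact $K$. So the family $\{\mathcal{T}_h(\cdot|x,\zeta): x\in K,\ \zeta\in\Act\}$ is compact in $\cP(\Rd)$, hence tight. Iterating over the finitely many steps that matter under discounting gives tightness of the $k$-step laws from $K$, uniformly over all policies. This is why the general result needs no compactness of $\Rd$, and the same fact would serve equally well in your Steps 1 and 2.

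Three details should be fixed if you write this out.
\begin{itemize}
\item In Step 1, the overflow error propagates through the bin-averaged kernel $\int_{B_i}\mathcal{T}_h(\cdot|y,\zeta)\pi_i(dy)$ of the finite model, not through $\mathcal{T}_h$. So the tail estimate must be applied to that chain. This is harmless: on non-overflow bins this kernel is a mixture of $\mathcal{T}_h(\cdot|y,\zeta)$ with $|y-x|\le L^-$.
\item In Step 2, the series should be $\sum_{k\ge0}\beta^kP^k\bigl(\hat T^M\hat J^M_h-\hat J^M_h\bigr)$, where $P$ is the linear transition operator of $\gamma^M_h\circ\phi_{\mathds{X}}$. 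It should not use powers of the affine operator $\hat T^M$.
\item Also in Step 2, $\hat J^M_h$ is discontinuous, so the smallness of the one-step discrepancy on compacts must come from Step 1 plus tightness. That reduces it to the bin oscillation of the continuous function $x\mapsto c_h(x,\zeta)+\beta\int\cJ^*_{\alpha,h}(y,c)\,\mathcal{T}_h(dy|x,\zeta)$.
\end{itemize}
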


While in the above, we obtained an asymptotic convergence result (and this may be satisfactory in some applications), in the following we also provide an explicit rate of convergence, that is an error bound in terms of expected quantization loss terms and quantization parameters. Recall the cost and the transition model constructed for the finite space model in (\ref{finite_cost}). We extend these to space $\mathds{X}$ by defining
\begin{align*}
&\hat{c}_h(x,\zeta)=\int_{B_i}c_h(y,\zeta){\pi}_{i}(dy)\nonumber\\
&\hat{\mathcal{T}}_h(dx_1|x,\zeta)=\int_{B_i}\mathcal{T}_h(dx_1|y,\zeta){\pi}_{i}^*(dy).
\end{align*}
for any $x\in B_i$. Furthermore, we denote by $\hat{E}$ to denote the expectation under this kernel. We denote by $\hat{\cJ}^M_{\alpha,h}(x_0)$ the optimal value function under this kernel and the corresponding stage-wise cost function.

\noindent{\bf Error Bounds Under Total Variation Continuity}

In this part, instead of working with the uniform quantization error defined in \ref{unif_loss-2}, we introduce a state dependent loss function for the cost and transition kernel estimates. We assume that there exists functions $L_c$ and $L_{\mathcal{T}}$ such that 
\begin{align}\label{cost_kernel_loss}
&|c_h(x,\zeta) - \hat{c}_h(x,\zeta)|\leq L_c(x) ,\quad \forall \zeta\in\mathds{U},\nonumber\\
&\|\mathcal{T}_h(\cdot\mid x,\zeta)-\hat{\mathcal{T}}_h(\cdot\mid x,\zeta)\|_{TV}\leq  L_{\mathcal{T}}(x),\quad  \forall \zeta\in\mathds{U}.
\end{align} 
The following is then a direct consequence:
\begin{theorem}\label{disc_error_thm}\cite[Theorem 3]{KSYContQLearning}\cite[Theorem 2.2]{bicer2025quantizer}
Let 
\begin{align*}
L(x)=L_c(x)+ \frac{\beta\|c\|_\infty}{1-\beta}L_{\mathcal{T}}(x).
\end{align*}
Under Assumptions \hyperlink{A1}{{(A1)}}--\hyperlink{A3}{{(A3)}}, with the discretization scheme described in Section \ref{fin_model_section}, we have that 
\begin{align*}
\cJ_{\alpha,h}(x_0,c,\gamma^M_h)-\cJ_{\alpha,h}^*(x_0,c)\leq \sum_{k=0}^\infty \beta^k \sup_{\gamma\in \Gamma}\Exp^{\gamma}\left[L(X_k)\right]
\end{align*}
where $\beta=e^{-\alpha h}$, $\gamma_h^M$ represents the policy designed for the finite space and discrete-time model, and where $\Gamma$ denotes the space of all stationary policies for the discrete-time model.
\end{theorem}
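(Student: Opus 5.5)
The plan is to compare three value functions on $\mathds{X}$: the true optimal value $\cJ^*_{\alpha,h}$ of the kernel $\mathcal{T}_h$, the optimal value $\hat{\cJ}^M_{\alpha,h}$ of the extended finite model $(\hat c_h,\hat{\mathcal{T}}_h)$, and the true cost of the piecewise-constant extension of $\gamma^M_h$. Since $\hat c_h$ and $\hat{\mathcal{T}}_h$ are constant on each bin $B_i$, $\hat{\cJ}^M_{\alpha,h}$ is constant on bins and coincides with $\hat J^M_h$ on $\mathds{X}_M$. Moreover, $\gamma^M_h\circ\phi_{\mathds{X}}$ is optimal for the extended model. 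We then split
\[
\cJ_{\alpha,h}(x_0,c,\gamma^M_h)-\cJ^*_{\alpha,h}(x_0,c)=\bigl[\cJ_{\alpha,h}(x_0,c,\gamma^M_h)-\hat{\cJ}^M_{\alpha,h}(x_0)\bigr]+\bigl[\hat{\cJ}^M_{\alpha,h}(x_0)-\cJ^*_{\alpha,h}(x_0,c)\bigr].
\]
Each bracket will be bounded by $\sum_k\beta^k\sup_\gamma\Exp^\gamma[L(X_k)]$, with the supremum taken over policies in the original model. The constants are arranged so that the two brackets together produce the stated $L$; if necessary we use the one-sided structure, or accept a factor that is absorbed.

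The key tool is a telescoping, or simulation-lemma, identity. Fix a stationary policy $\gamma$, let $J$ be its cost under $\mathcal{T}_h$, and let $\hat J$ be its cost under $\hat{\mathcal{T}}_h$. Then
\[
J(x)-\hat J(x)=c_h(x,\gamma(x))-\hat c_h(x,\gamma(x))+\beta\Bigl[\int J\,d\mathcal{T}_h-\int \hat J\,d\hat{\mathcal{T}}_h\Bigr].
\]
Adding and subtracting $\int \hat J\,d\mathcal{T}_h$ gives a one-step error of at most
\[
L_c(x)+\beta\,\|\hat J\|_{\mathrm{osc}}\,L_{\mathcal{T}}(x)\le L_c(x)+\frac{\beta\|c\|_\infty}{1-\beta}L_{\mathcal{T}}(x)=L(x).
\]
Here we use the bound $0\le\hat J\le\|c_h\|_\infty/(1-\beta)$, with the paper's normalization of $\|c\|_\infty$. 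The remainder is $\beta\int(J-\hat J)\,d\mathcal{T}_h$. Iterating this identity along the true chain driven by $\mathcal{T}_h$ under $\gamma$ gives
\[
|J(x_0)-\hat J(x_0)|\le\sum_k\beta^k\Exp^\gamma_{x_0}[L(X_k)].
\]
We apply this with $\gamma=\gamma^M_h\circ\phi_{\mathds{X}}$ to handle the first bracket. For the second bracket, we compare the two Bellman operators: $\hat{\cJ}^M_{\alpha,h}\le$ the extended-model cost of $\gamma^*$, where $\gamma^*$ is an optimal stationary policy for $\mathcal{T}_h$. Its existence follows from the weak Feller property and continuity of the cost in Theorem~\ref{regDiscAppr}, together with compactness of $\Act$ and measurable selection. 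The same telescoping then bounds $\hat{\cJ}^M_{\alpha,h}(x_0)-\cJ^*_{\alpha,h}(x_0)$ by the same series evaluated along the true chain under $\gamma^*$. Both quantities are dominated by the supremum over $\Gamma$.

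The main obstacle is that the naive sum of the two brackets gives $2L$ rather than $L$. To match the stated constant, I will instead avoid the symmetric step and use the optimality of $\gamma^M_h$ in the extended model more cleverly. Write
\[
\cJ(\gamma^M_h)-\cJ^*=\bigl[\cJ(\gamma^M_h)-\hat{\cJ}(\gamma^M_h)\bigr]+\bigl[\hat{\cJ}(\gamma^M_h)-\hat{\cJ}(\gamma^*)\bigr]+\bigl[\hat{\cJ}(\gamma^*)-\cJ(\gamma^*)\bigr].
\]
The middle term is $\le0$. The outer terms are signed differences, and I will try to exploit the fact that the extended kernel averages $\mathcal{T}_h$ over bins with $\pi_i$. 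If that fails, the result holds with $L$ replaced by $2L$, which suffices for all later asymptotic uses. A secondary technical point is measurability and the well-posedness of $\Exp^\gamma[L(X_k)]$ for merely measurable $L$, which we handle by taking $L_c$ and $L_{\mathcal{T}}$ measurable as in the cited results.
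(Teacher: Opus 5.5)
Your simulation-lemma argument is sound as far as it goes. With $\hat\gamma=\gamma^M_h\circ\phi_{\mathds{X}}$ optimal for the bin-constant extended model, it bounds both $\cJ_{\alpha,h}(x_0,c,\hat\gamma)-\hat{\cJ}^M_{\alpha,h}(x_0)$ and $\hat{\cJ}^M_{\alpha,h}(x_0)-\cJ^*_{\alpha,h}(x_0,c)$ by $S:=\sum_{k}\beta^k\sup_{\gamma\in\Gamma}\Exp^{\gamma}[L(X_k)]$. It therefore proves the claim with $2S$ on the right, not $S$. Combined with $\cJ^*_{\alpha,h}\le\cJ_{\alpha,h}(\cdot,c,\hat\gamma)$, your two estimates give exactly $|\hat{\cJ}^M_{\alpha,h}(x_0)-\cJ^*_{\alpha,h}(x_0,c)|\le S$. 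So constant one is what this machinery delivers for the value-approximation error, not for the performance loss of $\gamma^M_h$. A smaller point: $\|c_h\|_\infty=h\|c\|_\infty$, so $\beta\|c\|_\infty/(1-\beta)$ dominates the oscillation of $\hat J$ only for $h\le 1$. Using $|\int f\,d(\mu-\nu)|\le\frac12(\sup f-\inf f)\|\mu-\nu\|_{TV}$, one sees that for $h\le1$ only the $L_c$ part genuinely doubles.

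The gap is the final step. Getting from $2S$ to $S$ by ``exploiting the averaging more cleverly'' is only a hope, and it cannot be done with the generic MDP properties your argument uses. For a general weakly continuous MDP, the performance-loss bound with constant one is false. Take the following model:
\begin{itemize}
\item state space $[0,1]\cup\{2\}$, with bins $B_1=[0,1]$ ($\pi_1$ uniform) and overflow bin $B_2=\{2\}$;
\item actions $\zeta\in[0,1]$;
\item every transition goes to the absorbing zero-cost state $2$, so $\hat{\mathcal{T}}_h=\mathcal{T}_h$ and $L_{\mathcal{T}}\equiv 0$;
\item stage cost $\zeta x+(1-\zeta)(1-x+\epsilon)$ on $B_1$.
\end{itemize}
Then $\hat c_h(x,\zeta)=\frac12+(1-\zeta)\epsilon$, so the finite model chooses $\zeta=1$, while the smallest admissible $L_c$ is $|x-\frac12|$. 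At $x_0=1$ the loss is $1-\epsilon$, but the right-hand side equals $L(1)=\frac12$. This violates the bound for $\epsilon<\frac12$, and it shows the factor $2$ on $L_c$ is sharp.

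So to prove the statement literally you would need model-specific input about the diffusion kernel. An example would be a lower bound on $L_{\mathcal{T}}$ in terms of $L_c$, so that the surplus $L_c$ is absorbed by the large coefficient $\beta\|c\|_\infty/(1-\beta)$. Your plan does not supply this. The paper gives no proof beyond the citation, so the honest version to state and prove is the performance bound with $2L$ (or, for $h\le 1$, with $2L_c$ in place of $L_c$). The factor is immaterial for Lemma \ref{loss_bound_lemma}, the corollary following it, and Theorem \ref{theoremDiscNearOpt}.
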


The next result establishes the total variation Lipschitz continuity of the transition kernel of the discrete time model:
\begin{lemma}
 Under Assumptions \hyperlink{A1}{{(A1)}}--\hyperlink{A2}{{(A2)}}, we have the following
 \begin{align*}
\|\mathcal{T}_h(\cdot|x,\zeta)-\mathcal{T}_h(\cdot|y,\zeta)\|_{TV} \leq K_{\mathcal{T}}h^{-\frac{1}{2}}\|x-y\|
 \end{align*}
for some $K_{\mathcal{T}}<\infty$ where $K_{\mathcal{T}}$ is defined in the proof.
 \end{lemma}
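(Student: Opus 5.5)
We compare the laws of the one-step diffusion started at $x$ and at $y$ under the same constant control $\zeta$ on $[0,h]$. Write $X^x_h$ and $X^y_h$ for the solutions of \cref{E1.1} at time $h$ with $U\equiv\zeta$. Then $\mathcal{T}_h(\cdot|x,\zeta)$ is the law of $X^x_h$. Under \hyperlink{A1}{(A1)}--\hyperlink{A2}{(A2)} with bounded Lipschitz coefficients, this law has a transition density $p_h(x,\cdot;\zeta)$, the fundamental solution of the parabolic operator $\partial_t-\sL_\zeta$. The TV distance is then
\begin{align*}
\|\mathcal{T}_h(\cdot|x,\zeta)-\mathcal{T}_h(\cdot|y,\zeta)\|_{TV}=\int_{\Rd}\abs{p_h(x,z;\zeta)-p_h(y,z;\zeta)}\,dz .
\end{align*}
By the mean value theorem in the initial variable it suffices to bound $\int\abs{\grad_x p_h(x,z;\zeta)}\,dz$ uniformly in $x$ and $\zeta$. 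The plan is to invoke classical Gaussian (Aronson-type) gradient estimates for the parametrix construction:
\begin{align*}
\abs{\grad_x p_t(x,z;\zeta)}\le C\,t^{-\frac{d+1}{2}}\exp\left(-\frac{\abs{x-z}^2}{Ct}\right),\qquad 0<t\le 1 .
\end{align*}
Integrating in $z$ gives $\int\abs{\grad_x p_h}\,dz\le K_{\mathcal{T}}h^{-1/2}$, which yields the claim with $K_{\mathcal{T}}$ the constant from the gradient bound times a Gaussian integral. This bound holds for $h$ small; for $h$ large one can use the trivial bound $2$ and enlarge the constant, or simply restrict to $h\le1$.

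A more probabilistic alternative, which avoids citing PDE estimates, is to use Bismut--Elworthy--Li. For any bounded measurable $g$ with $\|g\|_\infty\le1$, set $P_hg(x)=\Exp[g(X^x_h)]$. Then
\begin{align*}
\grad P_hg(x)\cdot e=\frac1h\Exp\left[g(X^x_h)\int_0^h\bigl(\upsigma^{-1}(X^x_s)\,\partial_eX^x_s\bigr)\cdot dW_s\right].
\end{align*}
Here $\partial_eX^x_s$ is the derivative flow, and its second moment is bounded by $e^{Cs}$ using the Lipschitz constant $C_0$. Cauchy--Schwarz and the Itô isometry then give
\begin{align*}
\abs{\grad P_hg(x)}\le \frac1h\bigl(\|\upsigma^{-1}\|_\infty^2\, h\, e^{Ch}\bigr)^{1/2}=O(h^{-1/2}).
\end{align*}
Taking the supremum over $g$ gives the TV Lipschitz bound.

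\textbf{Main obstacle.} Both routes need a uniform lower bound on $a$, that is, a bounded $\upsigma^{-1}$ or uniform ellipticity. Assumption \hyperlink{A2}{(A2)} only gives ellipticity on balls $\sB_R$ with constant $C_R$. I would handle this in one of two ways:
\begin{itemize}
\item interpret (A2) as uniform, i.e. $C_R$ bounded in $R$, which is standard in \cite{pradhan2025discrete}; or
\item localize: use the boundedness of $b$ and $\upsigma$ so that, with probability $1-O(e^{-R^2/h})$, the process started near $x$ stays in $\sB_{\abs{x}+R}$ up to time $h$, apply the estimate there, and absorb the exit probability.
\end{itemize}
The second option makes $K_{\mathcal{T}}$ depend on $x$ locally. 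So I expect the first interpretation to be needed for a global constant. A secondary technical point is the differentiability of the flow when the coefficients are merely Lipschitz. I would handle it by mollifying $b$ and $\upsigma$, noting that the constants depend only on the Lipschitz and ellipticity bounds, and passing to the limit using weak convergence of the laws and lower semicontinuity of the TV norm.
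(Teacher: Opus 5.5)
Your first route is exactly the paper's proof: bound the TV distance by $\int_{\Rd}\abs{p_h(x,z)-p_h(y,z)}\,dz$, integrate the Gaussian gradient estimate $\abs{\grad_x p_h(x,z)}\le C\sqrt{d}\,h^{-\frac{d+1}{2}}e^{-\lambda\abs{x-z}^2/h}$ along the segment from $y$ to $x$, and evaluate the Gaussian integral to get $K_{\mathcal{T}}=C\sqrt{d}\,(\pi/\lambda)^{d/2}$. Your caveats are correct, and the paper passes over them silently: the cited Gaussian bounds need uniform ellipticity (not just the local (A2)) to give an $x$-independent constant, and they hold only on bounded time intervals, so restricting to $h\le 1$ is the right fix (the trivial bound $2$ alone would not give a bound linear in $\abs{x-y}$).
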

 \begin{proof}
Note that under Assumptions \hyperlink{A1}{{(A1)}}--\hyperlink{A2}{{(A2)}}, the process $X_t$ admits a density $p_t(x,z)$ for $x_0=x$. We then have that 
\begin{align*}
\|\mathcal{T}_h(\cdot|x,\zeta)-\mathcal{T}_h(\cdot|y,\zeta)\|_{TV} \leq \int \left|p_h(x,z)-p_h(y,z)\right|dz.
\end{align*}
Furthermore, using standard Gaussian bounds (see e.g. \cite{friedman2008partial,sheu1991some}), we have that 
\begin{align*}
\left\|\nabla_{x_0}p_h(x,z)\right\|\leq C\sqrt{d} h^{-\frac{1}{2}}h^{-\frac{d}{2}} \exp{\left(-\lambda\frac{\|x-z\|^2}{h}\right)}
\end{align*}
for some $C\geq 1$ and some $\lambda\in(0,1]$. We can then write
\begin{align*}
\int_{\mathbb{R}^d} \left|p_h(x,z)-p_h(y,z)\right|dz&=\int_{\mathbb{R}^d} \left|\int_0^1 \langle \nabla_{x_0}p_h(y+\alpha(x-y),z), (x-y)\rangle d\alpha\right|dz\\
&\leq \int_{\mathbb{R}^d} \int_0^1\|\nabla_{x_0}p_h(y+\alpha(x-y),z)\| \times\|x-y\|d\alpha dz\\
&\leq C\sqrt{d} h^{-\frac{1}{2}}h^{-\frac{d}{2}}\|x-y\|\int_0^1 \int_{\mathbb{R}^d} \exp{\left(-\lambda \frac{\|y+\alpha(x-y)-z\|^2}{h}\right)}dzd\alpha\\
&=C\sqrt{d} h^{-\frac{1}{2}}h^{-\frac{d}{2}}\|x-y\|\int_0^1 \int_{\mathbb{R}^d} \exp{\left(-\lambda\frac{\|\zeta\|^2}{h}\right)}dud\alpha\\
&=C\sqrt{d} h^{-\frac{1}{2}}h^{-\frac{d}{2}}\|x-y\|\left(\frac{\pi h}{\lambda}\right)^{\frac{d}{2}}= C\sqrt{d} h^{-\frac{1}{2}} \left(\frac{\pi}{\lambda}\right)^{\frac{d}{2}} \|x-y\|.
\end{align*}

 \end{proof}
 We can then choose $L_c$ and $L_{\mathcal{T}}$ as follows:
 \begin{align}\label{new_loss}
 &L_c(x)=\alpha_c h\int \|x-y\|\pi_i(dy), \quad x \in B_i, \quad i=1,\dots M-1\nonumber\\
 &L_{\mathcal{T}}(x)= K_{\mathcal{T}}h^{-\frac{1}{2}}   \int \|x-y\|\pi_i(dy), \quad x \in B_i, \quad i=1,\dots M-1.
 \end{align}
 For the overflow bin, we simply choose uniform bounds:
 \begin{align*}
 L_c(x)=2\|c\|_\infty h, \quad L_{\mathcal{T}}(x)=2 \text{ for } x\in B_M
 \end{align*}
where we use the fact that the total variation norm is always bounded by 2.

The result is not yet complete, as the expected discretization loss $\Exp^{\gamma}[L(X_k)]$ is not guaranteed to be sufficiently small even under finer quantization. In what follows, we bound this term using the Lyapunov stability assumption  \hyperlink{A4}{{(A4)}}, and adding more structure to the quantization scheme.  

We choose a side length parameter $N>0$, choose an integer $k\geq 1$ and set $M:=k^d$. We define the centered $d$-dimensional hyper-cube
\begin{align*}
\mathcal{K}:=\left[-\frac{N}{2},\frac{N}{2}\right]^d\subset \mathbb{X}.
 \end{align*}

 We partition each side $\mathcal{K}$ into $k$ bins uniformly with width
 \begin{align*}
\Delta=\frac{N}{k}=\frac{N}{M^\frac{1}{d}},
 \end{align*}
 which produces $M$ hyper-cubic cells of volume $\Delta^d=\frac{N^d}{M}$. We enumerate these bins as $\{B_1,\dots,B_M\}$ and set 
 \begin{align*}
B_{M+1}:=\mathbb{X}\backslash\mathcal{K}.
 \end{align*}

 With this construction, $N$ that is, the length of one-side of the compact region and $M$ that is, the number of cells within the compact region are design choices. 

 \begin{lemma}\label{loss_bound_lemma}
 Suppose that Assumption \hyperlink{A4}{{(A4)}} holds with $\mathcal{V}(x):=\|x\|^m$ for some $m\geq 1$. Let 
 \begin{align*}
 &C_1(h,\beta) =  \alpha_c h +  \frac{\beta\|c\|_\infty}{1-\beta}K_{\mathcal{T}}h^{-\frac{1}{2}}\\
 &C_2(x_0,h,\beta) =  \left(2\|c\|_\infty h + \frac{2\beta \|c\|_\infty}{1-\beta}\right)C(x_0) 
 \end{align*}
 where $C(x_0):=\max\left(\|x_0\|^m,\frac{C_0}{C_1}\right) $ where $C_0,C_1$ are given in Assumption \hyperlink{A4}{{(A4)}}. 
We then we have that 
\begin{align*}
\Exp^{\gamma}\left[L(X_k)\right] \leq C_1(\beta,h)\frac{N^d}{M}+ C_2(x_0,h,\beta)\left(\frac{N}{2}\right)^{-m}
\end{align*}
 \end{lemma}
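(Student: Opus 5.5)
The plan is to split $\Exp^{\gamma}[L(X_k)]$ according to whether $X_k$ lies in the compact cube $\mathcal{K}$ or in the overflow bin $B_{M+1}=\mathbb{X}\setminus\mathcal{K}$:
\[
\Exp^{\gamma}[L(X_k)] = \Exp^{\gamma}\bigl[L(X_k)I_{\{X_k\in\mathcal{K}\}}\bigr] + \Exp^{\gamma}\bigl[L(X_k)I_{\{X_k\notin\mathcal{K}\}}\bigr].
\]
The two pieces will produce the two terms of the stated bound, with the first controlled by the cell size and the second by a tail estimate.

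\emph{Inner cells.} On $\mathcal{K}$ I use the choice \eqref{new_loss}. For $x\in B_i$ with $i\le M$,
\[
L(x)=\Bigl(\alpha_c h+\tfrac{\beta\|c\|_\infty}{1-\beta}K_{\mathcal{T}}h^{-1/2}\Bigr)\int\|x-y\|\,\pi_i(dy) = C_1(h,\beta)\int\|x-y\|\,\pi_i(dy).
\]
Since $\pi_i$ is supported on $B_i$, the integral is at most $\mathrm{diam}(B_i)$, which is of order $\Delta$. The statement instead carries the factor $N^d/M=\Delta^d$, the cell volume. So I will bound the integral by the cell-size quantity appearing in the statement, i.e.\ take $\sup_{x\in B_i}\int\|x-y\|\pi_i(dy)\le N^d/M$, up to the normalization the authors implicitly use. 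This matching of the cell-size quantity to $N^d/M$ is the step I expect to be delicate; if it does not hold literally, the honest bound is $\sqrt d\,N/M^{1/d}$. Using $I_{\{X_k\in\mathcal{K}\}}\le 1$, the first piece is then at most $C_1(h,\beta)N^d/M$.

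\emph{Overflow bin.} On $B_{M+1}$, $L$ is the constant $2\|c\|_\infty h+\frac{2\beta\|c\|_\infty}{1-\beta}$, so the second piece is this constant times $P^\gamma(X_k\notin\mathcal{K})$. If $X_k\notin\mathcal{K}$ then $\|X_k\|\ge N/2$, and Markov's inequality gives
\[
P^\gamma(X_k\notin\mathcal{K})\le \Exp^\gamma\bigl[\|X_k\|^m\bigr](N/2)^{-m}.
\]
It remains to show $\sup_{k,\gamma}\Exp^\gamma[\mathcal{V}(X_k)]\le \max(\mathcal{V}(x_0),C_0/C_1)=C(x_0)$.

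\emph{Uniform moment bound.} This is the main step. By Remark \ref{R1}, $X_k$ has the law of the diffusion at time $kh$ under the piecewise-constant interpolated control. That control is admissible, so \hyperlink{A4}{(A4)} applies along the path for every $\zeta$. Applying It\^o's formula (with a localization by $\uptau_R$ and then $R\to\infty$) to $e^{C_1t}\mathcal{V}(X_t)$ gives
\[
\Exp[\mathcal{V}(X_t)]\le e^{-C_1t}\mathcal{V}(x_0)+\frac{C_0}{C_1}\bigl(1-e^{-C_1t}\bigr)\le \max\Bigl(\mathcal{V}(x_0),\frac{C_0}{C_1}\Bigr),
\]
since the right side is a convex combination. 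The localization needs some justification, because $\mathcal{V}=\|x\|^m$ is unbounded; Fatou's lemma handles the limit $R\to\infty$. Taking $t=kh$ gives the uniform bound, so the overflow piece is at most $C_2(x_0,h,\beta)(N/2)^{-m}$. Summing the inner and overflow pieces gives the claim.
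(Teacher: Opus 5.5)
Your outline follows the paper's proof step for step. You split $\Exp^{\gamma}[L(X_k)]$ over $\mathcal{K}$ and $B_{M+1}$. You bound the overflow part by the constant value of $L$ there times $\mu_k(B_{M+1})\le C(x_0)(N/2)^{-m}$, via Markov's inequality. You get the uniform moment bound from (A4). Those parts are fine. Your moment bound is in fact stated more carefully than the paper's: $e^{-C_1t}\mathcal{V}(x_0)+\frac{C_0}{C_1}(1-e^{-C_1t})$ is a convex combination and hence at most $C(x_0)$. The paper's displayed $\|x_0\|^m e^{-C_1t}+C_0/C_1$ does not by itself imply the max bound.

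The gap is the inner-cell step you flagged, and it is not a normalization issue. For $x\in B_i$, the quantity $\int\|x-y\|\,\pi_i(dy)$ is a length, bounded by $\mathrm{diam}(B_i)=\sqrt d\,\Delta$. By contrast, $N^d/M=\Delta^d$ is the cell volume, and for $d\ge 2$ and $\Delta<1$ one has $\Delta^d\ll\Delta$. The diameter bound is also not wasteful. Whenever $\mu_k$ has a density bounded below on some ball (as for a nondegenerate diffusion at a positive time), $\int_{\mathcal{K}}L\,d\mu_k$ is at least a positive multiple of $C_1(h,\beta)\Delta$, with the multiple independent of $N$ and $\Delta$. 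So fixing $\Delta$ small and then taking $N$ large makes $\Exp^{\gamma}[L(X_k)]$ exceed $C_1(h,\beta)\Delta^d+C_2(x_0,h,\beta)(N/2)^{-m}$. The bound you ``take'' is therefore false when $d\ge 2$, not merely unproved.

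The paper's proof makes exactly the same move: it asserts that each bin in $\mathcal{K}$ is ``bounded by $\Delta^d=N^d/M$'', which conflates cell volume with cell diameter. So there is no hidden normalization to appeal to. What your argument (and the paper's) actually establishes is
\[
\Exp^{\gamma}\left[L(X_k)\right]\le \sqrt d\,C_1(h,\beta)\,\frac{N}{M^{1/d}}+C_2(x_0,h,\beta)\Bigl(\frac{N}{2}\Bigr)^{-m},
\]
which agrees with the stated bound only for $d=1$. This still tends to $0$ as $M\to\infty$ with $N\to\infty$ slowly enough, so the qualitative conclusion survives. However, balancing the two terms with $N=M^{1/(d(m+1))}$ gives the rate $M^{-m/(d(m+1))}$. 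That is slower than the $M^{-m/(d+m)}$ claimed in the subsequent remark and corollary.
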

 \begin{remark}
     The result implies that by sending $M\to\infty$, and sending $N\to\infty$ at a slower rate, we can make $\Exp^{\gamma}\left[L(X_k)\right] $ arbitrarily small. In particular, choosing $N=k^{\frac{d}{d+m}}=M^{\frac{1}{d+m}}$, we get the following bound that only depends on the number of bins $M$:
     \begin{align*}
        \Exp^{\gamma}\left[L(X_k)\right] \leq \left(C_1(h,\beta) + 2^m C_2(h,\beta)\right)M^{\frac{-m}{d+m}}
     \end{align*}
     which goes to $0$ as $M\to\infty$. As noted earlier, for an explicit condition, please see \cite[(H2)]{bayraktar2017ergodicity}.
 \end{remark}

 \begin{proof}
We start by noting that under Assumption  \hyperlink{A4}{{(A4)}} holds with $\mathcal{V}(x):=\|x\|^m$, we can show that \cite{KushnerControlStochasticSC}\cite[Prop. 5.5.1]{van2007stochastic}
\begin{align*}
\Exp^{\gamma}[\|X_t\|^m]\leq \|x_0\|^m e^{-C_1 t} + \frac{C_0}{C_1}
\end{align*}
and in particular, for all $t$
\begin{align*}
\Exp^{\gamma}[\|X_t\|^m]\leq \max(\|x_0\|^m,\frac{C_0}{C_1})=:C(x_0). 
\end{align*}
We then write 
\begin{align*}
&\Exp^{\gamma}\left[L(X_k)\right] = \int_{\mathcal{K}}L(x)\mu_k(dx) + \int_{B_{M+1}} L(x)\mu_k(dx) 
\end{align*}
where $\mu_k$ denotes the marginal distribution of the state $X_k=X(k\times h)$. Note that each bin within $\mathcal{K}$ is bounded by $\Delta^d =\frac{N^d}{M}$. Hence, we write
\begin{align*}
 \int_{\mathcal{K}}L(x)\mu_k(dx)=\int_{\mathcal{K}}L_c(x)+ \frac{\beta\|c\|_\infty}{1-\beta}L_{\mathcal{T}}(x)\mu_k(dx)\leq \left(\alpha_c h +  \frac{\beta\|c\|_\infty}{1-\beta}K_{\mathcal{T}}h^{-\frac{1}{2}}\right) \frac{N^d}{M}
\end{align*}
We can then write
\begin{align*}
\Exp^{\gamma}\left[L(X_k)\right]& \leq  \left(\alpha_c h +  \frac{\beta\|c\|_\infty}{1-\beta}K_{\mathcal{T}}h^{-\frac{1}{2}}\right) \frac{N^d}{M} + \left(2\|c\|_\infty h + \frac{2\beta \|c\|_\infty}{1-\beta}\right)\mu_k(B_{M+1}).
\end{align*}
For the overflow bin, we have
\begin{align*}
&\mu_k(B_{M+1})=\mu_k\{x=\{x_1,\dots,x_d\}:\max_i |x_i|\geq \frac{N}{2}\}\leq \mu_k(\|x\|\geq \frac{N}{2})\leq \frac{\Exp[\|X_k\|^m]}{\left(\frac{N}{2}\right)^m}
\end{align*}
Combining these bounds, we write
\begin{align*}
\Exp^{\gamma}\left[L(X_k)\right]& \leq  \left(\alpha_c h +  \frac{\beta\|c\|_\infty}{1-\beta}K_{\mathcal{T}}h^{-\frac{1}{2}}\right) \frac{N^d}{M}  + \left(2\|c\|_\infty h + \frac{2\beta \|c\|_\infty}{1-\beta}\right)C(x_0) \left(\frac{N}{2}\right)^{-m}.
\end{align*}

\end{proof}

\begin{corollary}[To Theorem \ref{disc_error_thm} and Lemma \ref{loss_bound_lemma}]
 Suppose that Assumption \hyperlink{A4}{{(A4)}} holds with $\mathcal{V}(x):=\|x\|^m$ for some $m\geq 1$. 
With the discretization scheme described in Section \ref{fin_model_section}, we have that 
\begin{align*}
\cJ_{\alpha,h}(x_0,c,\gamma^M_h)-\cJ_{\alpha,h}^*(x_0,c)\leq  \frac{C_1(\beta,h)}{1-\beta}\frac{N^d}{M}+ \frac{C_2(x_0,h,\beta)}{1-\beta}\left(\frac{N}{2}\right)^{-m}
\end{align*}
where $C_1(\beta,h)$ and $C_2(x_0,\beta,h)$ are defined in the statement of Lemma \ref{loss_bound_lemma} and where $\beta=e^{-\alpha h}$, $\gamma_h^M$ represents the policy designed for the finite space and discrete-time model.

In particular, choosing $N=M^{\frac{1}{d+m}}$, we get the following bound:
     \begin{align*}
       \cJ_{\alpha,h}(x_0,c,\gamma^M_h)-\cJ_{\alpha,h}^*(x_0,c)\leq \left( \frac{C_1(h,\beta)}{1-\beta} + \frac{2^mC_2(h,\beta)}{1-\beta}\right) M^{\frac{-m}{d+m}}.
            \end{align*}
\end{corollary}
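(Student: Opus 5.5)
The corollary follows by combining Theorem \ref{disc_error_thm} with Lemma \ref{loss_bound_lemma}. First, I will recall that Theorem \ref{disc_error_thm} gives
\begin{align*}
\cJ_{\alpha,h}(x_0,c,\gamma^M_h)-\cJ_{\alpha,h}^*(x_0,c)\leq \sum_{k=0}^\infty \beta^k \sup_{\gamma\in \Gamma}\Exp^{\gamma}\left[L(X_k)\right],
\end{align*}
with $L = L_c + \frac{\beta\|c\|_\infty}{1-\beta}L_{\mathcal{T}}$. I will use the loss functions $L_c, L_{\mathcal{T}}$ of (\ref{new_loss}) on the cells of the hypercube $\mathcal{K}$, together with the uniform bounds on the overflow bin $B_{M+1}$. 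These are admissible choices in (\ref{cost_kernel_loss}): inside a cell, the Lipschitz property of $c$ from \hyperlink{A3}{(A3)} and the total variation Lipschitz lemma bound $|c_h-\hat c_h|$ and $\|\mathcal{T}_h-\hat{\mathcal{T}}_h\|_{TV}$ by averaging over $y\sim\pi_i$.

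The key point is that the bound of Lemma \ref{loss_bound_lemma} is uniform in both $k$ and the policy $\gamma$. Its proof uses only the Lyapunov moment estimate
\begin{align*}
\Exp^{\gamma}[\|X_t\|^m]\le C(x_0),
\end{align*}
which holds for every admissible control, since the drift condition in \hyperlink{A4}{(A4)} holds for all $\zeta\in\Act$. It also uses Markov's inequality for the overflow mass and the cell-diameter bound inside $\mathcal{K}$. Hence
\begin{align*}
\sup_{\gamma}\Exp^\gamma[L(X_k)]\le C_1(\beta,h)\frac{N^d}{M}+C_2(x_0,h,\beta)\left(\frac{N}{2}\right)^{-m}
\end{align*}
for all $k$. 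Summing the geometric series $\sum_k\beta^k=\frac{1}{1-\beta}$ then yields the first display of the corollary.

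For the second display, I substitute $N=M^{1/(d+m)}$. This gives
\begin{align*}
\frac{N^d}{M}=M^{\frac{d}{d+m}-1}=M^{-\frac{m}{d+m}}, \qquad \left(\frac{N}{2}\right)^{-m}=2^mM^{-\frac{m}{d+m}}.
\end{align*}
Factoring out $M^{-m/(d+m)}$ gives the stated bound.

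The main obstacle is checking that the supremum over policies in Theorem \ref{disc_error_thm} is covered by the lemma. The moment bound must hold for the sampled chain under arbitrary discrete-time policies, and not only for the stationary ones. I would handle this through Remark \ref{R1}: a discrete-time policy induces a piecewise constant admissible control of the diffusion with the same marginals at the sampling times. Then Itô's formula applied to $e^{C_1 t}\Lyap(X_t)$, which is valid under any admissible control by \hyperlink{A4}{(A4)}, gives the uniform moment bound at every $t=kh$.
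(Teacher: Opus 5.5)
Your proposal is correct and is the argument the paper intends; the paper gives no separate proof. The bound of Lemma~\ref{loss_bound_lemma} does not depend on $k$ or $\gamma$, so inserting it into Theorem~\ref{disc_error_thm} and summing $\sum_{k}\beta^k=(1-\beta)^{-1}$ gives the first display, and substituting $N=M^{1/(d+m)}$ gives the second.

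Your derivation of the policy-uniform moment bound via Remark~\ref{R1} and It\^o's formula for $e^{C_1 t}\Lyap(X_t)$ just makes explicit what the paper's proof of the lemma takes from its references. One caveat comes from the lemma, not from your argument: its factor $N^d/M$ is the cell volume, whereas the cell-diameter bound you cite would give $\sqrt{d}\,N M^{-1/d}$, and the two agree only for $d=1$.
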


\subsubsection{Analysis of Term (\ref{term3})}
 Recall that (\ref{term3}) deals with the optimal value function of the sampled controlled Markov chain and the optimal value function of the controlled diffusion process.

\begin{proposition}\label{constant_policy}
Under  Assumptions \hyperlink{A1}{{(A1)}} -\hyperlink{A3}{{(A3)}}
\begin{align*}
\lim_{h\to 0}\cJ_{\alpha,h}^*(x_0,c)=\cJ^*_{\alpha}(x_0, c).
\end{align*}
\end{proposition}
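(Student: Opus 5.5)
The plan is to prove the two one-sided inequalities separately, using Theorem \ref{TD1.3} together with Proposition \ref{time_app_thm}.

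\emph{Lower bound.} Let $\gamma_h^*$ be an optimal (or $\epsilon$-optimal) policy for the discrete-time model. Such a policy exists because the model is weakly continuous with bounded continuous cost by Theorem \ref{regDiscAppr}, and the discounted cost is finite, so a measurable selection applies. Let $U_h^*$ be the corresponding piecewise constant interpolated control. By Proposition \ref{time_app_thm},
\[
\cJ_{\alpha,h}^*(x_0,c)\ \ge\ \cJ_\alpha(x_0,c,U_h^*) - \frac{Kh}{1-e^{-\alpha h}}\Bigl(h+\sqrt{2h/\pi}\Bigr) - \epsilon .
\]
Since $U_h^*$ is admissible, $\cJ_\alpha(x_0,c,U_h^*)\ge \cJ^*_\alpha(x_0,c)$. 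The error term behaves like $\frac{K}{\alpha}(h+\sqrt{2h/\pi})$ as $h\to0$, so it vanishes. Hence $\liminf_{h\to0}\cJ^*_{\alpha,h}\ge \cJ^*_\alpha$.

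\emph{Upper bound.} Here I need admissible controls for the diffusion that are piecewise constant on the grid $h\ZZ_+$ and depend only on the sampled states, with cost close to $\cJ^*_\alpha$. Two routes are available.

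(a) Use Theorem \ref{TD1.3} together with the same Proposition \ref{time_app_thm} bound in the reverse direction:
\[
\cJ_{\alpha,h}^*(x_0,c)=\cJ_{\alpha,h}(x_0,c,\gamma_h^*)\le \cJ_\alpha(x_0,c,U_h^*)+o(1),
\]
and $\cJ_\alpha(x_0,c,U_h^*)\to\cJ_\alpha^*(x_0,c)$. This gives the limit a.e.

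(b) To get the statement for every $x_0$, take an optimal stationary Markov $v^*$ for the diffusion, which exists by the HJB characterization. Approximate it by a continuous $v_\epsilon$ (Lusin's theorem, with density control coming from non-degeneracy (A2)). Define the sampled policy $\gamma(x)=v_\epsilon(x)$ held constant on $[kh,(k+1)h)$. A weak convergence argument in relaxed controls in $\cM(\infty)$ then shows the cost of the resulting interpolated control converges to $\cJ_\alpha(x_0,c,v_\epsilon)$. This uses tightness, continuity of $b,c$, and uniqueness of the limit SDE. Finally, Proposition \ref{time_app_thm} transfers this to $\cJ_{\alpha,h}(x_0,c,\gamma)\ge\cJ^*_{\alpha,h}$.

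The a.e.\ versus everywhere issue can also be handled differently. Both $\cJ^*_{\alpha,h}$ and $\cJ^*_\alpha$ are continuous in $x_0$, the first by the TV-Lipschitz kernel lemma and the second by standard results. For equicontinuity in $h$, use instead the uniform-in-$h$ Lipschitz estimate from coupling $\Exp|X_t^x-X_t^y|\le e^{Ct}|x-y|$ together with discounting, which holds for $C<\alpha$. If $C\ge\alpha$, use a Hölder exponent instead.

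The main obstacle is the upper bound: showing that policies piecewise constant in time, with feedback only at sampling times, asymptotically achieve the continuous-time optimum over all admissible controls. My first attempt will be route (a), relying on \cite{pradhan2025discrete}, since it reduces directly to earlier results. Route (b), via the weak convergence/relaxed control compactness in $\cM(\infty)$, is the fallback if pointwise convergence for every $x_0$ is required.
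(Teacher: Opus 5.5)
Your route (a) is essentially the paper's proof. Both reduce the claim to Theorem~\ref{TD1.3} plus the $O(\sqrt{h})$ per-step discrepancy between $\Exp\int_0^h e^{-\alpha s}c(X_s,\zeta)\,ds$ and $c(x,\zeta)h$. The paper inserts the intermediate value $\cJ^h_\alpha$ and compares two Bellman equations by contraction, whereas you apply Proposition~\ref{time_app_thm} directly to an optimal discrete policy and get the lower bound for every $x_0$ from admissibility of its interpolation. Either way the conclusion is only as strong as Theorem~\ref{TD1.3}, i.e.\ it holds for a.e.\ $x_0$.

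Your upgrade to every $x_0$ goes beyond what the paper does. It works if you do the synchronous coupling one step at a time inside the Bellman recursion, where the action is constant on $[0,h]$; this gives the uniform Lipschitz constant $L_{n+1}\le \sqrt{K_c}\,h+e^{-(\alpha-C)h}L_n$. It does not work along whole trajectories under a possibly discontinuous sampled-state feedback policy, because there $\Exp|X^x_t-X^y_t|\le e^{Ct}|x-y|$ is not available.
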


\begin{proof}
We denote by 
\begin{align*}
 \cJ^h_{\alpha}(x_0, c)=\inf_{U\in \bar{\Uadm}^h}\cJ_{\alpha}(x, c,U)
 \end{align*}
the optimal value for the continuous-time control problem under piecewise constant controls.

We start with the following bound
\begin{align*}
|\cJ_{\alpha,h}^*(x_0,c)-\cJ^*_{\alpha}(x_0, c)|&\leq |\cJ_{\alpha,h}^*(x_0, c)-\cJ^h_{\alpha}(x_0, c)|+|\cJ^h_{\alpha}(x_0, c)-\cJ^*_{\alpha}(x_0, c)|
\end{align*}
Note that $ \cJ^h_{\alpha}(x_0, c)$ satisfies the following Bellman equation:
\begin{align*}
 \cJ^h_{\alpha}(x_0, c)&=\inf_{\zeta\in\mathbb{U}}\left\{ \Exp\left[\int_0^he^{-\alpha s}c(X_s,\zeta)ds\right] + e^{-\alpha h } \Exp\left[ \cJ^h_{\alpha}(X_h, c)|x_0,\zeta\right]\right\}\\
 &=\inf_{\zeta\in\mathbb{U}}\left\{ \Exp\left[\int_0^he^{-\alpha s}c(X_s,\zeta)ds\right] + e^{-\alpha h } \int \cJ^h_{\alpha}(x_h, c)\mathcal{T}_h(dx_h|x_0,\zeta)\right\}.
\end{align*}
Similarly, for the optimal value of the discrete time control problem, we have
\begin{align*}
\cJ_{\alpha,h}^*(x_0,c)&=\inf_{\zeta\in\mathbb{U}}\left\{ c(x_0,\zeta)\times h+ e^{-\alpha h } \int \cJ^*_{\alpha,h}(x_h, c)\mathcal{T}_h(dx_h|x_0,\zeta)\right\}.
\end{align*}
We then have the following upper bound:
\begin{align*}
\sup_x|\cJ^h_{\alpha}(x, c)-\cJ_{\alpha,h}^*(x,c)|&\leq \sup_{\zeta} \left|\Exp_x\left[\int_0^he^{-\alpha s}c(X_s,\zeta)ds\right] - c(x,\zeta)\times h \right| +  e^{-\alpha h }\sup_x|\cJ^h_{\alpha}(x, c)-\cJ_{\alpha,h}^*(x)|\\
&\leq Kh\left(h+\sqrt{\frac{2h}{\pi}}\right)+  e^{-\alpha h }\sup_x|\cJ^h_{\alpha}(x, c)-\cJ_{\alpha,h}^*(x,c)|
\end{align*}
where we used  Assumptions \hyperlink{A1}{{(A1)}} -\hyperlink{A3}{{(A3)}} for the last step. We then have the 
\begin{align*}
\sup_x|\cJ^h_{\alpha}(x, c)-\cJ_{\alpha,h}^*(x,c)|&\leq \frac{Kh}{1-e^{-\alpha h}}\left(h+\sqrt{\frac{2h}{\pi}}\right)
\end{align*}

Finally, for the second term we have that
 $\cJ_{\alpha}(x_0, c,U_h)\to\cJ^*_{\alpha}(x_0, c)$ as $h\to 0$ where $U_h$ is the optimal piecewise constant control. This is an instance of the analysis presented in \cite{pradhan2025discrete} (see Theorem \ref{TD1.3}); see also \cite{jakobsen2019improved}.
 
\end{proof}

In view of the supporting results above, we are now ready to state the finite model approximation theorem for the discounted cost criterion.

\begin{theorem}\label{theoremDiscNearOpt}
Let the optimal policy for the finite MDP constructed in Section \ref{finiteMDP} be $\gamma_h^M$, and denote the corresponding control process obtained using $\gamma_h^M$ by $U_h^M$. Under Assumptions \hyperlink{A1}{{(A1)}}--\hyperlink{A4}{{(A4)}}, we have that for every $\epsilon > 0$, there exists $\bar{M}$ and $\bar{h}$ such that 
\[\cJ_{\alpha}(x_0, c,U^M_h) - \cJ^*_{\alpha}(x_0, c) \leq \epsilon,\]
for $h \leq \bar{h}$ and $M \geq \bar{M}$. 

\end{theorem}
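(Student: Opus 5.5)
The plan is to use the three-term decomposition \eqref{term1}--\eqref{term3} and bound each term by $\epsilon/3$. The parameters are fixed in a specific order: first $h$, through terms \eqref{term1} and \eqref{term3}, which do not involve $M$; then, for that fixed $h$, the number of bins $M$, through term \eqref{term2}.

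\textbf{Term \eqref{term1}.} This is the error from running the finite-model policy on the diffusion versus on the sampled chain. The policy $\gamma^M_h$ is a (stationary) admissible policy for the discrete-time MDP of Section \ref{disc_time_model}, since it is the composition of the finite-model policy with $\phi_{\mathds{X}}$. Proposition \ref{time_app_thm} therefore applies directly and gives
\[
\bigl|\cJ_{\alpha}(x_0,c,U^M_h)-\cJ_{\alpha,h}(x_0,c,\gamma^M_h)\bigr|\le \frac{Kh}{1-e^{-\alpha h}}\Bigl(h+\sqrt{2h/\pi}\Bigr).
\]
This bound is uniform in $M$ and in the policy. Since $1-e^{-\alpha h}\sim \alpha h$, the right-hand side is of order $\sqrt{h}/\alpha$ and tends to $0$. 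So there is $h_1$ such that this term is at most $\epsilon/3$ for all $h\le h_1$ and all $M$.

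\textbf{Term \eqref{term3}.} By Proposition \ref{constant_policy}, $\cJ^*_{\alpha,h}(x_0,c)\to\cJ^*_\alpha(x_0,c)$ as $h\to0$. This gives $h_2$ such that the term is at most $\epsilon/3$ for $h\le h_2$. Set $\bar h=\min(h_1,h_2)$.

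\textbf{Term \eqref{term2}.} This term is nonnegative. For it I would use the explicit bound rather than only the asymptotic $M\to\infty$ statement, because the explicit bound makes the dependence on $h$ visible. By the Corollary to Theorem \ref{disc_error_thm} and Lemma \ref{loss_bound_lemma}, with $N=M^{1/(d+m)}$, it is bounded by
\[
\frac{C_1(h,\beta)+2^mC_2(x_0,h,\beta)}{1-\beta}\,M^{-m/(d+m)},\qquad \beta=e^{-\alpha h}.
\]
If Lyapunov function is not of the form $\|x\|^m$, I would instead invoke the asymptotic theorem based on \cite[Theorem 4.27]{SaLiYuSpringer}, which uses only the weak Feller property from Theorem \ref{regDiscAppr}. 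Either way, for any fixed $h$ there is $\bar M(h)$ such that the term is at most $\epsilon/3$ for $M\ge\bar M(h)$.

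\textbf{Main obstacle.} The difficulty is the order of quantifiers. The constant $C_1(h,\beta)/(1-\beta)$ contains $K_{\mathcal T}h^{-1/2}/(1-\beta)^2$, which is of order $h^{-5/2}$ and blows up as $h\to0$. So $\bar M$ cannot be chosen uniformly over all $h\le\bar h$. I would handle this by reading the statement as: first fix $h\le\bar h$ (or take $h=\bar h$), then choose $\bar M=\bar M(h)$. If a statement uniform in $h$ is required, I would restrict to $h\in[h_0,\bar h]$ for a fixed $h_0>0$, where the constants are bounded, and take $\bar M=\bar M(h_0)$.

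Summing the three terms gives $\cJ_\alpha(x_0,c,U^M_h)-\cJ^*_\alpha(x_0,c)\le\epsilon$.
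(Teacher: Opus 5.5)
Your proposal is correct and follows the paper's own route: the three-term split (\ref{term1})--(\ref{term3}), with Proposition~\ref{time_app_thm} for the first term, the finite-model result for the second (asymptotically via the weak Feller property, or via the explicit Corollary when $\Lyap=\|x\|^m$), and Proposition~\ref{constant_policy} for the third. Your remark on the order of quantifiers is also accurate: the paper's bound for term~(\ref{term2}) holds only for fixed $h$, and its constant blows up as $h\to 0$, so the paper's argument likewise gives $\bar M=\bar M(h)$ rather than a single $\bar M$ valid for all $h\le\bar h$.
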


\subsection{Near Optimality of Finite State MDP Construction for Ergodic Cost}\label{sectionErgFM}

For the ergodic cost criterion, we can potentially proceed with two approaches. In the first one, we will show and then build on near optimality of a discounted cost optimal or near optimal solutions for the ergodic cost problem and utilize the analysis in the previous section. To this end, in the following, we present a key result. In the second approach, we can directly work with approximating the average cost discrete-time model. 

\subsubsection{Approximation via Near Optimality of Discounted Optimal Policies for Average Cost}
In this section we study the near optimality of the discounted optimal policies for the ergodic cost criterion. This will be a critical result in establishing our near optimality results involving reinforcement learning.

Let $\alpha_n \to 0$ be a sequence of discount factors, and for every $n$, $v_{\alpha_n}^*$ be a discounted-cost optimal policy. We have the following result.

\begin{theorem}\label{TErgoOptNear}
Suppose that Assumptions \hyperlink{A1}{{(A1)}}--\hyperlink{A4}{{(A4)}} hold. Then, for any $\epsilon > 0$ there exist $N_{\epsilon} > 0$ such that
\begin{equation}\label{ETErgoNearOpt1}
\sE_{x}(c, v_{\alpha_n}^*) \leq \sE^{*}(c) + \epsilon\quad\quad \mbox{for all}\,\,\, n \geq N_{\epsilon}\,.
\end{equation}
\end{theorem}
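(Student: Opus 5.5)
We prove the statement by a vanishing-discount argument that compares the ergodic HJB equation of Theorem~\ref{TErgoOpt1} with the $\alpha$-discounted HJB equation. Here $v_{\alpha}^*\in\Usm$ denotes a measurable minimizing selector of the discounted HJB equation
\[
\alpha J_\alpha^*(x,c) = \min_{\zeta\in\Act}\bigl[\sL_\zeta J_\alpha^*(x,c) + c(x,\zeta)\bigr].
\]
Under \hyperlink{A1}{(A1)}--\hyperlink{A3}{(A3)}, \cite[Chapter~3]{ABG-book} gives $J_\alpha^*(\cdot,c)\in\cC^2(\Rd)$ as its unique bounded solution.

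\textbf{Step 1: normalization and convergence.} Set $\bar V_\alpha(x) := J_\alpha^*(x,c)-J_\alpha^*(0,c)$. Under \hyperlink{A4}{(A4)}, the standard estimates of \cite[Lemma~3.7.8, Theorem~3.7.11]{ABG-book} give the following.
\begin{itemize}
\item[(a)] $\alpha J_\alpha^*(0,c)\to\sE^*(c)=\rho$ as $\alpha\to 0$.
\item[(b)] $\bar V_\alpha$ is bounded in $\Sob^{2,p}(\sB_R)$ for every $R$ and $p>1$, uniformly in small $\alpha$. This comes from Harnack's inequality and interior $L^p$ estimates, using non-degeneracy \hyperlink{A2}{(A2)}.
\item[(c)] $|\bar V_\alpha|\le K(1+\Lyap)$ uniformly in $\alpha$, with $\bar V_\alpha\to V^*$ in $\cC^{1,r}_{\mathrm{loc}}$.
\end{itemize}
Uniqueness in Theorem~\ref{TErgoOpt1} means the whole sequence converges, not just a subsequence. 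Rewriting the discounted equation under $v_\alpha^*$, we get
\begin{equation}\label{EPlanA}
\sL_{v^*_\alpha}\bar V_\alpha(x) + c(x,v^*_\alpha(x)) = \alpha\bar V_\alpha(x) + \alpha J_\alpha^*(0,c).
\end{equation}

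\textbf{Step 2: the ergodic cost of $v_\alpha^*$.} Under \hyperlink{A4}{(A4)} every stationary Markov control is stable, with invariant measure $\mu_{\alpha}$, and $\sE_x(c,v_\alpha^*)=\int c(y,v^*_\alpha(y))\,\mu_\alpha(dy)$. The family $\{\mu_\alpha\}$ satisfies the uniform bound $\int \Lyap\, d\mu_\alpha \le C_0/C_1$, obtained by integrating \eqref{Lyap1}. We integrate \eqref{EPlanA} against $\mu_\alpha$. Since $\bar V_\alpha\in\sorder(\Lyap)$ uniformly, It\^o--Dynkin with the Lyapunov bound gives $\int \sL_{v_\alpha^*}\bar V_\alpha\, d\mu_\alpha = 0$. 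This yields
\[
\sE_x(c,v^*_\alpha)=\alpha\int \bar V_\alpha\,d\mu_\alpha + \alpha J^*_\alpha(0,c)\le \alpha K\Bigl(1+\tfrac{C_0}{C_1}\Bigr)+\alpha J^*_\alpha(0,c).
\]
Letting $\alpha=\alpha_n\to0$, the first term vanishes and the second tends to $\sE^*(c)$ by Step 1(a). So for $n\ge N_\epsilon$ we get $\sE_x(c,v^*_{\alpha_n})\le\sE^*(c)+\epsilon$ for all $x$, which is \eqref{ETErgoNearOpt1}.

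An alternative route would take Borkar-topology limit points of $v^*_{\alpha_n}$, show via (ii) of Theorem~\ref{TErgoOpt1} that they are ergodic optimal, and use continuity of $v\mapsto\sE(c,v)$ on the compact $\Usm$. The direct bound above avoids needing that continuity.

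\textbf{Main obstacle.} The crux is the \emph{uniform in $\alpha$} growth bound $|\bar V_\alpha|\le K(1+\Lyap)$. It is needed both to justify the vanishing of $\int \sL_{v_\alpha^*}\bar V_\alpha\,d\mu_\alpha$ (a martingale and uniform-integrability argument along $\uptau_R\to\infty$) and to control $\alpha\int\bar V_\alpha d\mu_\alpha$. I would establish it as in \cite[Lemma~3.7.8]{ABG-book}: bound $|J^*_\alpha(x)-J^*_\alpha(0)|$ by the expected cost accumulated until the hitting time $\uuptau_r$ of a small ball, plus an oscillation term on $\sB_r$ controlled by Harnack. The hitting-time expectation is bounded by $K\Lyap(x)$ via \eqref{Lyap1}, using Dynkin's formula and boundedness of $c$.
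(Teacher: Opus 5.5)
Your proposal is correct, but it takes a different route from the paper's proof.

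\textbf{The paper's route.} It uses only the local bounds \eqref{ETErgoOptNearA}. It passes to the limit in the discounted HJB along a subsequence and shows that every Borkar-topology limit point $\hat v^*$ of $\{v^*_{\alpha_n}\}$ attains the minimum in the ergodic HJB. So $\hat v^*$ is ergodic optimal by Theorem~\ref{TErgoOpt1}(ii). The paper then concludes from the continuity of $v\mapsto\sE_x(c,v)$ on the compact space $\Usm$, imported from \cite{pradhan2022near}; a sub-subsequence argument is implicit to obtain the claim for the whole sequence. This gives the extra structural fact that limit points of discounted-optimal policies are ergodic optimal. It is, however, purely qualitative and relies on an external continuity theorem.

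\textbf{Your route.} You use the exact identity $\sE_x(c,v^*_\alpha)=\alpha\int J^*_\alpha\,d\mu_\alpha$. This identity needs no PDE: since $J^*_\alpha=\cJ_\alpha(\cdot,c,v^*_\alpha)$, Fubini and invariance give $\alpha\int\cJ_\alpha(\cdot,c,v)\,d\mu_v=\int c(y,v(y))\,\mu_v(dy)$ for every stable $v\in\Usm$, where $\mu_v$ is the invariant measure of $v$. You combine it with a global growth bound on $\bar V_\alpha$ and the moment bound $\int\Lyap\,d\mu_v\le C_0/C_1$, which is uniform over $\Usm$. This yields an explicit rate,
$\sE_x(c,v^*_\alpha)-\sE^*(c)\le\abs{\alpha J^*_\alpha(0,c)-\sE^*(c)}+\alpha K(1+C_0/C_1)$,
uniform in $x$ and valid along the whole family $\alpha\to0$. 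Because the moment bound does not depend on the policy, the same argument also covers any stationary policy that is pointwise $\delta$-optimal for the discounted problem; the gap enters only as $\alpha\delta$. That is the setting of Theorem~\ref{NearOptDisErgo1}.

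\textbf{What your route costs, and technical remarks.}
\begin{itemize}
\item The price is the global growth bound, but you only need its upper half. The dynamic programming inequality up to $\uuptau_r$ gives $\bar V_\alpha(x)\le\norm{c}_\infty\Exp_x^{v}[\uuptau_r]+\sup_{\bar\sB_r}\bar V_\alpha$, and the last term is uniformly bounded by \eqref{ETErgoOptNearA}.
\item Take $r$ large rather than small, so that $\sK\subset\sB_r$ and $\Lyap\ge1$ off $\sB_r$. Then Dynkin's formula with \eqref{Lyap1} gives $\Exp_x^v[\uuptau_r]\le\Lyap(x)/C_1$ directly; a small ball would require an extra step.
\item Derive the moment bound through $\Exp_x\Lyap(X_t)\le\Lyap(x)e^{-C_1t}+C_0/C_1$ together with truncation. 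Integrating \eqref{Lyap1} against $\mu_\alpha$ directly is not justified, since $\Lyap$ is unbounded.
\item The vanishing of $\int\sL_{v^*_\alpha}\bar V_\alpha\,d\mu_\alpha$ does not need any $\sorder(\Lyap)$ bound, because $\bar V_\alpha$ is bounded for each fixed $\alpha$.
\end{itemize}
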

\begin{proof}
Since $\|c\|_{\infty} \leq M,$ it follows that $\sE^{*}(c) \leq M$\,. Also, in view of Lyapunov stability (\ref{Lyap1}), we have that all $v\in\Usm$ is stable and $\inf_{v\in\Usm}\eta_v(\sB_R) > 0$ for any $R>0$ (see \cite[Lemma~3.3.4]{ABG-book} and \cite[Lemma~3.2.4(b)]{ABG-book}). Thus from \cite[Theorem~3.7.6]{ABG-book}, we deduce that there exist constants $\widehat{C}_1, \widehat{C}_2 >0$ depending only on the radius $R$ such that for all $\alpha >0$,
\begin{equation}\label{ETErgoOptNearA}
\|V_\alpha(\cdot) - V_\alpha(0)\|_{\Sob^{2,p}(\sB_{R})} \leq \widehat{C}_1 \quad \text{and}\,\,\, \sup_{\sB_R}\alpha V_{\alpha} \leq \widehat{C}_2\,.
\end{equation}We know that for $1< p < \infty$, the space $\Sob^{2,p}(\sB_R)$ is reflexive and separable, hence, as a corollary of Banach Alaoglu theorem, we have that every bounded sequence in $\Sob^{2,p}(\sB_R)$ has a weakly convergent subsequence (see \cite[Theorem~3.18.]{HB-book}). Also, we know that for $p\geq d+1$ the space $\Sob^{2,p}(\sB_R)$ is compactly embedded in $\cC^{1, \beta}(\bar{\sB}_R)$\,, where $\beta < 1 - \frac{d}{p}$ (see \cite[Theorem~A.2.15 (2b)]{ABG-book}), which implies that every weakly convergent sequence in $\Sob^{2,p}(\sB_R)$ will converge strongly in $\cC^{1, \beta}(\bar{\sB}_R)$\,. Thus, in view of the estimate (\ref{ETErgoOptNearA}), by a standard diagonalization argument and Banach Alaoglu theorem, we can extract a subsequence denoted by $\{V_{\alpha}^{n}\}$ such that for some $V_{\alpha}^*\in \Sobl^{2,p}(\Rd)$
\begin{equation}\label{ETErgoOptNearB}
\begin{cases}
V_{\alpha_{n}}\to & V^*\quad \text{in}\quad \Sobl^{2,p}(\Rd)\quad\text{(weakly)}\\
V_{\alpha_{n}}\to & V^*\quad \text{in}\quad \cC^{1, \beta}_{loc}(\Rd) \quad\text{(strongly)}\,,
\end{cases}       
\end{equation} and $\alpha_n V_{\alpha_n}(0) \to \rho$\,. Now by the standard vanishing discount argument, as in \cite[Lemma~3.7.8]{ABG-book}, taking $\alpha\to 0$ we have that the pair $(V^*, \rho)\in \cC^2(\Rd)\cap \sorder(\Lyap)\times \RR$ satisfying $V^*(0) = 0$, is the unique solution of the ergodic optimality equation (\ref{EErgoOpt1A})\,. 

Moreover, since for each $n$, $v_{\alpha_n}^*$ is a discounted-cost optimal policy, we have (see \cite[Theorem~3.5.6]{ABG-book})
\begin{equation}\label{ETErgoOptNearC}
\sL_{v_{\alpha_n}^*}V_{\alpha_n^*}(x) + c(x, v_{\alpha_n}^*(x)) = \min_{\zeta\in \Act}\left[ \sL_{\zeta}V_{\alpha_n^*}(x) + c(x, \zeta)\right]\quad \text{a.e.}\,\,\, x\in\Rd\,.
\end{equation} Let $\bar{V}_{\alpha_n^*}(x) := V_{\alpha_n^*}(x) - V_{\alpha_n^*}(0)$\,. Thus rewriting (\ref{ETErgoOptNearC}), we obtain
\begin{align}\label{ETErgoOptNearD}
\sL_{v_{\alpha_n}^*}\bar{V}_{\alpha_n^*}(x) + c(x, v_{\alpha_n}^*(x)) &= \min_{\zeta\in \Act}\left[ \sL_{\zeta}\bar{V}_{\alpha_n^*}(x) + c(x, \zeta)\right]\nonumber\\
&= \alpha\bar{V}_{\alpha_n^*}(x) + \alpha V_{\alpha_n^*}(0)\quad \text{a.e.}\,\,\, x\in\Rd\,.
\end{align}

Since space of stationary Markov strategies $\Usm$ is compact,  along some further sub-sequence (without loss of generality denoting by the same sequence) we have $v_{n}^*\to \hat{v}^*$ in $\Usm$\,. It is easy to see that
\begin{align*}
&b(x,v_{\alpha_n}^*(x))\cdot \grad \bar{V}_{\alpha_n}(x) - b(x,\hat{v}^*(x))\cdot \grad V^*(x) \nonumber\\
&=  b(x,v_{\alpha_n}^*(x))\cdot \grad \left(\bar{V}_{\alpha_n} - V^*\right)(x)  + \left(b(x,v_{\alpha_n}^*(x)) - b(x,\hat{v}^*(x))\right)\cdot \grad V^*(x)\,.
\end{align*}
Since $\bar{V}_{\alpha_n}\to V^*$ in $\cC^{1, \beta}_{loc}(\Rd)\,,$ on any compact set $b(x,v_{\alpha_n}^*(x))\cdot \grad \left(\bar{V}_{\alpha_n} - V^*\right)(x)\to 0$ strongly and by the topology of $\Usm$, we have $\left(b(x,v_{\alpha_n}^*(x)) - b(x,\hat{v}^*(x))\right)\cdot \grad V^*(x)\to 0$ weakly. Thus, in view of the topology of $\Usm$, and since $\bar{V}_{\alpha_n}\to V^*$ in $\cC^{1, \beta}_{loc}(\Rd)\,,$ as $n\to \infty$ we obtain 
\begin{equation}\label{ETErgoOptNearE}
b(x,v_{\alpha_n}^*(x))\cdot \grad \bar{V}_{\alpha_n}(x) + c(x, v_{\alpha_n}^*(x)) \to b(x,\hat{v}^*(x))\cdot \grad V^*(x) + c(x, \hat{v}^*(x))\quad\text{weakly}\,.
\end{equation}
Now, multiplying by a test function $\phi\in \cC_{c}^{\infty}(\Rd)$, integrating on both sides of (\ref{ETErgoOptNearD}), and letting $n\to\infty$ it follows that $\hat{v}^*\in \Usm$ satisfies
\begin{align}\label{ETErgoOptNearF}
\rho &= \min_{\zeta \in\Act}\left[\sL_{\zeta}V^*(x) + c(x, \zeta)\right] \nonumber\\
&=\, \trace\bigl(a(x)\grad^2 V^*(x)\bigr) + b(x,v^*(x))\cdot \grad V^*(x) + c(x, v^*(x))\,,\quad \text{a.e.}\,\, x\in\Rd\,.
\end{align}Thus, from Theorem~\ref{TErgoOpt1}, we deduce that $\hat{v}^*\in \Usm$ is an ergodic optimal policy\,.

Now, from \cite[Theorem~3.6.]{pradhan2022near}, we have that the ergodic cost as a function of policy, i.e., the map $v\to \sE_x(c, v)$ is continuous on $\Usm$\,. Since $\sE^*(c) = \sE_x(c, \hat{v}^*)$ and $v_{\alpha_n}^* \to \hat{v}^*$ in $\Usm$, we deduce that for any $\epsilon > 0$, there exists $N_{\epsilon} > 0$ such that (\ref{ETErgoNearOpt1}) holds. This completes the proof of the theorem.  \end{proof}
The following theorem shows that any near-optimal policy for the discounted cost is also near-optimal for the ergodic cost criterion\,. This is what is needed for our analysis. 

\begin{theorem}\label{NearOptDisErgo1}
Suppose that Assumptions (A1)-(A3) hold. For any $\epsilon > 0$, let $v_{\alpha}^{\epsilon}$ be an $\epsilon$-optimal policy for the $\alpha$-discounted cost. Then, for small enough $\alpha$, we have
\begin{equation}\label{ETNearOptDisErgo1A}
\sE_{x}(c, v_{\alpha}^{\epsilon}) \leq \sE^{*}(c) + \epsilon\quad\quad \mbox{a.e}\,\,\, x \in \Rd\,.
\end{equation}  
\end{theorem}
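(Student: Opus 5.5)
We would mimic the proof of Theorem~\ref{TErgoOptNear}, replacing the HJB equality satisfied by an exact discounted optimizer with an approximate version, and argue by contradiction along a sequence. Suppose the claim fails. Then there is a sequence $\alpha_n\to 0$ and $\epsilon$-optimal (for the $\alpha_n$-discounted cost) stationary policies $v_n:=v^{\epsilon}_{\alpha_n}\in\Usm$ with $\sE_x(c,v_n)>\sE^*(c)+\epsilon$ on a set of positive measure. Since the ergodic cost of a stationary policy under \hyperlink{A4}{(A4)} does not depend on $x$, we simply write $\sE(c,v_n)$. We will use the Lyapunov condition \hyperlink{A4}{(A4)}, as Theorem~\ref{TErgoOptNear} did, even though the statement lists only (A1)--(A3), since stability is needed for $\sE^*(c)$ to be meaningful.

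The natural route is an Abelian/vanishing-discount comparison. First, since $v_n$ is $\epsilon$-optimal, we have $\alpha_n\cJ_{\alpha_n}(x,c,v_n)\le \alpha_n J^*_{\alpha_n}(x,c)+\alpha_n\epsilon$. The proof of Theorem~\ref{TErgoOptNear} gives $\alpha_n J^*_{\alpha_n}(x)\to\rho=\sE^*(c)$, locally uniformly via \eqref{ETErgoOptNearA}. Hence $\limsup_n\alpha_n\cJ_{\alpha_n}(x,c,v_n)\le\sE^*(c)$. Notice that the additive slack $\epsilon$ is multiplied by $\alpha_n$ and disappears. This is why $\epsilon$-optimality at the discounted level suffices, and is even stronger than what the statement asks.

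Second, we would use compactness of $\Usm$ in the Borkar topology to pass to a subsequence with $v_n\to\hat v$. Then we must show $\sE(c,\hat v)\le\sE^*(c)$, which forces $\sE(c,\hat v)=\sE^*(c)$. For fixed $v$, the Abelian theorem under \hyperlink{A4}{(A4)} gives $\alpha\cJ_\alpha(x,c,v)\to\sE(c,v)$. To make this uniform in $v$, we would use the uniform geometric ergodicity implied by \eqref{Lyap1}, which holds uniformly over $\Usm$ with constants depending only on $C_0,C_1,\sK$. This yields $\sup_{v\in\Usm}|\alpha\cJ_\alpha(x,c,v)-\sE(c,v)|\to 0$ as $\alpha\to0$, with the error bounded by $\alpha\|c\|_\infty\,C(1+\Lyap(x))/C_1$-type terms. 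The bound comes from writing $\cJ_\alpha(x,c,v)=\sE(c,v)/\alpha+$ a relative value, and bounding the relative value by $\|c\|_\infty\sum$ of TV distances to the invariant measure.

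With uniformity in hand, $\sE(c,v_n)\le\alpha_n\cJ_{\alpha_n}(x,c,v_n)+o(1)\le\sE^*(c)+o(1)$. This contradicts $\sE(c,v_n)>\sE^*(c)+\epsilon$ for large $n$, and along a subsequence that suffices. Continuity of $v\mapsto\sE(c,v)$ (\cite[Theorem~3.6]{pradhan2022near}) is not even needed in this route, though it can serve as a backup: if uniformity fails, one instead shows that the limit $\hat v$ satisfies \eqref{EErgoOpt1B} and concludes as in Theorem~\ref{TErgoOptNear}.

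The main obstacle is the uniform Abelian estimate. We would first try to bound the relative discounted value $\cJ_\alpha(x,c,v)-\cJ_\alpha(0,c,v)$ uniformly in $v$ and $\alpha$, via the $\Sob^{2,p}$ bounds of \cite[Theorem~3.7.6]{ABG-book} applied to the linear equation for fixed $v$. These bounds are uniform over $\Usm$ because the inf over $v$ of $\eta_v(\sB_R)$ is positive. We would then pass through the Poisson equation to obtain $|\alpha\cJ_\alpha(x,c,v)-\sE(c,v)|\le\alpha\,\widehat C(x)$ with $\widehat C$ independent of $v$.
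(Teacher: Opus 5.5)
Your argument is correct, but on the one nontrivial term it takes a genuinely different route from the paper. Both proofs handle $|\alpha\cJ_\alpha(0,c,v^\epsilon_\alpha)-\alpha V_\alpha(0)|\le\alpha\epsilon$ and $\alpha V_\alpha(0)\to\sE^*(c)$ in the same way. The paper, like you, actually needs (A4) for this.

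The difference is the term $\sE(c,v^\epsilon_\alpha)-\alpha\cJ_\alpha(0,c,v^\epsilon_\alpha)$. The paper treats it qualitatively:
\begin{itemize}
\item it extracts a subsequence $v^\epsilon_{\alpha_n}\to\hat v^\epsilon$ in $\Usm$ and invokes continuity of $v\mapsto\sE_x(c,v)$ from \cite{pradhan2022near};
\item it passes to the limit in the linear equation \eqref{ENearOptDisErgo1C} for the relative discounted values, using $\Sob^{2,p}$ bounds uniform in $n$, to get the Poisson equation \eqref{ENearOptDisErgo1F} for $\hat v^\epsilon$;
\item it shows $\hat V\in\sorder(\Lyap)$ by an exit-time bound and identifies $\hat\rho=\sE_x(c,\hat v^\epsilon)$ via It\^o--Krylov.
\end{itemize}
So both quantities share every subsequential limit.

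You instead prove a uniform Abelian bound $\sup_{v\in\Usm}|\alpha\cJ_\alpha(x,c,v)-\eta_v(c)|\le\alpha\widehat C(x)$. This makes compactness of $\Usm$, the continuity theorem and the limiting PDE unnecessary, so your compactness paragraph can simply be dropped. The cost is that the real content moves into Harris' theorem. Besides $C_0,C_1,\sK$, you need a minorization $P^v_t(x,\cdot)\ge\delta\nu$ on a compact sublevel set of $\Lyap$, with $\delta,\nu$ independent of $v\in\Usm$. This has to come from (A2) on a ball together with the bounds on $b$ and $\upsigma$, via a Harnack or killed-heat-kernel lower bound valid for every measurable drift bounded by $\|b\|_\infty$. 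It is standard, but it should be stated rather than attributed to $C_0,C_1,\sK$ alone.

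The payoff is quantitative. Discounted and ergodic optima are both attained in $\Usm$, so the same estimate gives $|\alpha V_\alpha(0)-\sE^*(c)|\le\sup_{v\in\Usm}|\alpha\cJ_\alpha(0,c,v)-\eta_v(c)|$. Hence $\sE_x(c,v^\epsilon_\alpha)-\sE^*(c)\le\alpha\bigl(\epsilon+2\widehat C(0)\bigr)$ for every $x$. That is an explicit $O(\alpha)$ rate, which the paper's compactness argument does not provide.

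One caution about your fallback. You cannot show directly, as in Theorem~\ref{TErgoOptNear}, that the limit $\hat v$ satisfies \eqref{EErgoOpt1B}. An $\epsilon$-optimal $v^\epsilon_{\alpha_n}$ need not satisfy the discounted HJB identity \eqref{ETErgoOptNearC}, and its relative values need not converge to those of $V_{\alpha_n}$. The workable fallback is exactly the paper's: pass to the limit in the policy's own linear equation and identify $\hat\rho$ with $\sE(c,\hat v)$.
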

\begin{proof}
By the triangle inequality we have
\begin{align}\label{EtriMain1}
\abs{\sE_{x}(c, v_{\alpha}^{\epsilon}) - \sE^{*}(c)} \leq \abs{\sE_{x}(c, v_{\alpha}^{\epsilon}) - \alpha\cJ_{\alpha}^{v_{\alpha}^{\epsilon}}(0, c)} + \abs{\alpha\cJ_{\alpha}^{v_{\alpha}^{\epsilon}}(0, c) - \alpha V_{\alpha}(0)} + \abs{\alpha V_{\alpha}(0) - \sE^{*}(c)}     
\end{align}
By the standard vanishing discount argument as in \cite[Theorem~3.7.11]{ABG-book}, we have $\abs{\alpha V_{\alpha}(0) - \sE^{*}(c)} \to 0$ as $\alpha \to 0$\,. Thus, for $\alpha$ small enough, it follows that
\begin{align}\label{ENearOptDisErgo1AA}
\abs{\alpha V_{\alpha}(0) - \sE^{*}(c)}  \leq \frac{\epsilon}{3}
\end{align}
Moreover, since $v_{\alpha}^{\epsilon}$ is $\epsilon$-optimal, for small $\alpha$, we obtain
\begin{align}\label{ENearOptDisErgo1B}
\abs{\alpha\cJ_{\alpha}^{v_{\alpha}^{\epsilon}}(0, c) - \alpha V_{\alpha}(0)}  \leq \frac{\epsilon}{3}
\end{align}

Next we want to show that $\sE_{x}(c, v_{\alpha}^{\epsilon}), \alpha\cJ_{\alpha}^{v_{\alpha}^{\epsilon}}(0, c)$ converge to the same limit as $\alpha \to 0$\,.


From \cite[Theorem~3.6.]{pradhan2022near}, we have that the the map $v\mapsto \sE_x(c, v)$ is continuous on $\Usm$\,. Since along some sub-sequence $v_{\alpha_n}^{\epsilon} \to \hat{v}^{\epsilon}$ in $\Usm$, we deduce that $\sE_{x}(c, v_{\alpha_n}^{\epsilon}) \to \sE_{x}(c, \hat{v}^{\epsilon})$\,. Furthermore, for each $n\in \NN$, the discounted cost $\cJ_{\alpha}^{v_{\alpha_n}^{\epsilon}}(x, c)$ associated to the policy $v_{\alpha_n}^{\epsilon}$ satisfies the following equation
\begin{align}\label{ENearOptDisErgo1C}
\sL_{v_{\alpha_n}^{\epsilon}}\bar{\cJ}_{\alpha}^{v_{\alpha_n}^{\epsilon}}(x, c) + c(x, v_{\alpha_n}^{\epsilon}(x)) = \alpha\bar{\cJ}_{\alpha}^{v_{\alpha_n}^{\epsilon}}(x, c) + \alpha \cJ_{\alpha}^{v_{\alpha_n}^{\epsilon}}(0, c)\quad \text{a.e.}\,\,\, x\in\Rd\,.
\end{align} where $\bar{\cJ}_{\alpha}^{v_{\alpha_n}^{\epsilon}}(x, c) := \cJ_{\alpha}^{v_{\alpha_n}^{\epsilon}}(x, c) - \cJ_{\alpha}^{v_{\alpha_n}^{\epsilon}}(0, c)$\,.
Since $c$ is bounded, we have $\sE_{x}(c, v_{\alpha}^{\epsilon}) \leq \|c\|_{\infty}$\,. Moreover, the Lyapunov stability (\ref{Lyap1}), implies that $\inf_{v\in\Usm}\eta_v(\sB_R) > 0$ for any $R>0$, where $\eta_v$ is the unique invariant measure of \eqref{E1.1} under $v\in \Usm$ (see \cite[Lemma~3.3.4]{ABG-book} and \cite[Lemma~3.2.4(b)]{ABG-book}). Hence from \cite[Theorem~3.7.4]{ABG-book}, we obtain
\begin{equation}\label{ENearOptDisErgo1D}
\|\cJ_{\alpha}^{v_{\alpha_n}^{\epsilon}}(x, c) - \cJ_{\alpha}^{v_{\alpha_n}^{\epsilon}}(0, c)\|_{\Sob^{2,p}(\sB_{R})} \leq \bar{C}_1(R) \quad \text{and}\,\,\, \sup_{\sB_R}\alpha \cJ_{\alpha}^{v_{\alpha_n}^{\epsilon}}(x, c) \leq \bar{C}_2(R)\,.
\end{equation} for some positive constant $\bar{C}_1(R), \bar{C}_2(R)$ (depending on $R$)\,. Thus, by the Banach Alaoglu theorem, and a standard diagonalization argument, we can extract a subsequence (denoted by the same sequence) such that for some $\hat{V}\in \Sobl^{2,p}(\Rd)$
\begin{equation}\label{ENearOptDisErgo1E}
\begin{cases}
\bar{\cJ}_{\alpha}^{v_{\alpha_n}^{\epsilon}}(x, c) \to & \hat{V}\quad \text{in}\quad \Sobl^{2,p}(\Rd)\quad\text{(weakly)}\\
\bar{\cJ}_{\alpha}^{v_{\alpha_n}^{\epsilon}}(x, c) \to & \hat{V} \quad \text{in}\quad \cC^{1, \beta}_{loc}(\Rd) \quad\text{(strongly)}\,,
\end{cases}       
\end{equation} and $\alpha_n \cJ_{\alpha}^{v_{\alpha_n}^{\epsilon}}(0, c) \to \hat{\rho}$\,.
Multiplying the both sides of the equation \eqref{ENearOptDisErgo1C} by a test function $\phi\in \cC_{c}^{\infty}(\Rd)$, integrating and letting $n\to\infty$ it follows that $(\hat{\rho}, \hat{V})\in \RR \times \Sobl^{2,p}(\Rd)$ satisfies
\begin{align}\label{ENearOptDisErgo1F}
\hat{\rho} = \trace\bigl(a(x)\grad^2 \hat{V}(x)\bigr) + b(x,\hat{v}^{\epsilon}(x))\cdot \grad \hat{V}(x) + c(x, \hat{v}^{\epsilon}(x))\,,\quad \text{a.e.}\,\, x\in\Rd\,.
\end{align}
Now, by the It\^{o}-Krylov formula, from \eqref{ENearOptDisErgo1C} we have
\begin{align*}
\cJ_{\alpha}^{v_{\alpha_n}^{\epsilon}}(x, c) = \Exp_x^{v_{\alpha_n}^{\epsilon}}\left[\int_0^{\uuptau_{r}} e^{-\alpha t}c(X_s, v_{\alpha_n}^{\epsilon}(X_s)) d{s} + e^{-\alpha \uuptau_{r}}\cJ_{\alpha}^{v_{\alpha_n}^{\epsilon}}(X_{\uuptau_{r}}, c)\right]\,.
\end{align*} Subtracting $\cJ_{\alpha}^{v_{\alpha_n}^{\epsilon}}(0, c)$ from both sides of the above inequality, we deduce the following:
\begin{align}\label{ENearOptDisErgo1H}
\bar{\cJ}_{\alpha}^{v_{\alpha_n}^{\epsilon}}(x, c) = & \Exp_x^{v_{\alpha_n}^{\epsilon}}\left[\int_0^{\uuptau_{r}} e^{-\alpha t}c(X_s, v_{\alpha_n}^{\epsilon}(X_s)) d{s} + e^{-\alpha \uuptau_{r}}\cJ_{\alpha}^{v_{\alpha_n}^{\epsilon}}(X_{\uuptau_{r}}, c) - \cJ_{\alpha}^{v_{\alpha_n}^{\epsilon}}(0, c)\right]\nonumber\\
= & \Exp_x^{v_{\alpha_n}^{\epsilon}}\left[\int_0^{\uuptau_{r}} e^{-\alpha t}\left(c(X_s, v_{\alpha_n}^{\epsilon}(X_s)) - \hat{\rho}\right) d{s}\right] + \Exp_x^{v_{\alpha_n}^{\epsilon}}\left[\cJ_{\alpha}^{v_{\alpha_n}^{\epsilon}}(X_{\uuptau_{r}}, c) - \cJ_{\alpha}^{v_{\alpha_n}^{\epsilon}}(0, c)\right]\nonumber\\
&+ \Exp_x^{v_{\alpha_n}^{\epsilon}}\left[\frac{1-e^{-\alpha \uuptau_{r}}}{\alpha}\left(\hat{\rho} - \alpha\cJ_{\alpha}^{v_{\alpha_n}^{\epsilon}}(X_{\uuptau_{r}}, c)\right)\right]\nonumber\\
\leq & \sup_{v\in \Usm} \Exp_x^{v}\left[\int_0^{\uuptau_{r}}\left(c(X_s, v(X_s)) + \hat{\rho}\right) d{s}\right] + \Exp_x^{v_{\alpha_n}^{\epsilon}}\left[\cJ_{\alpha}^{v_{\alpha_n}^{\epsilon}}(X_{\uuptau_{r}}, c) - \cJ_{\alpha}^{v_{\alpha_n}^{\epsilon}}(0, c)\right]\nonumber\\
&+ \Exp_x^{v_{\alpha_n}^{\epsilon}}\left[\frac{1-e^{-\alpha \uuptau_{r}}}{\alpha}\left(\hat{\rho} - \alpha\cJ_{\alpha}^{v_{\alpha_n}^{\epsilon}}(X_{\uuptau_{r}}, c)\right)\right]
\end{align}
Since, under the Lyapunov stability assumption each $v_{\alpha_n}^{\epsilon} \in \Usm$ is stable thus $\Exp_x^{v_{\alpha_n}^{\epsilon}}\left[\frac{1-e^{-\alpha \uuptau_{r}}}{\alpha}\right] = \Exp_x^{v_{\alpha_n}^{\epsilon}}\left[\int_{0}^{\uuptau_{r}}e^{-\alpha t}\right] \leq \Exp_x^{v_{\alpha_n}^{\epsilon}}\left[\uuptau_r\right] < \infty$. Moreover, from \eqref{ENearOptDisErgo1E}, we have $\sup_{\sB_r} \abs{\hat{\rho} - \alpha\cJ_{\alpha}^{v_{\alpha_n}^{\epsilon}}} \to 0$ (since $\abs{\hat{\rho} - \alpha\cJ_{\alpha}^{v_{\alpha_n}^{\epsilon}}(0, c)} \to 0$ and $\bar{\cJ}_{\alpha}^{v_{\alpha_n}^{\epsilon}}$ is bounded on compact sets uniformly in $\alpha$)\,. Now letting $\alpha \to 0 $ along sub-sequence $\alpha_n$, from \eqref{ENearOptDisErgo1H} we deduce that
\begin{align*}
\hat{V}(x)\leq & \sup_{v\in \Usm} \Exp_x^{v}\left[\int_0^{\uuptau_{r}}\left(c(X_s, v(X_s)) + \hat{\rho}\right) d{s}\right] + \Exp_x^{\hat{v}{\epsilon}}\left[\hat{V}(X_{\uuptau_{r}})\right]\,.
\end{align*} This implies
\begin{align}\label{ENearOptDisErgo1I}
\abs{\hat{V}(x)}\leq & \sup_{v\in \Usm} \Exp_x^{v}\left[\int_0^{\uuptau_{r}}\left(c(X_s, v(X_s)) + \hat{\rho}\right) d{s}\right] + \sup_{x \in \sB_r}\hat{V}(x)\,. 
\end{align} Thus, from \cite[Lemma~3.7.2]{ABG-book}, we obtain $\hat{V} \in \sorder(\Lyap)$\,. Now, by It\^{o}-Krylov formula, from \eqref{ENearOptDisErgo1F}, we get

\begin{align*}
\Exp_x^{\hat{v}^{\epsilon}}\left[\hat{V}(X_{T})\right] - \hat{V}(x) &= \Exp_x^{\hat{v}^{\epsilon}}\left[\int_0^{T}\sL_{\hat{v}^{\epsilon}} \hat{V}(X_t) dt\right]\nonumber\\
&= \Exp_x^{\hat{v}^{\epsilon}}\left[\int_0^{T} \left(\hat{\rho} - c(X_t,\hat{v}^{\epsilon}(X_t))\right)dt\right]
\end{align*}

Since $\hat{V}\in \sorder{(\Lyap)}$, in view of the results in \cite[Lemma~3.7.2(ii)]{ABG-book}, dividing both sides of the above equation by $T$ and letting $T\to \infty$, we deduce that
$\hat{\rho} = \sE_{x}(c, \hat{v}^{\epsilon})$.  
Therefore, for small $\alpha$, we obtain
\begin{align}\label{ENearOptDisErgo1G}
\abs{\sE_{x}(c, v_{\alpha}^{\epsilon}) - \alpha\cJ_{\alpha}^{v_{\alpha}^{\epsilon}}(0, c)}  \leq \frac{\epsilon}{3}.
\end{align} Now combining \eqref{ENearOptDisErgo1AA}, \eqref{ENearOptDisErgo1B}, \eqref{ENearOptDisErgo1G}, from \eqref{EtriMain1} we get \eqref{ETNearOptDisErgo1A}\,. This completes the proof\,.
\end{proof}

\subsubsection{Finite Model Approximation for Average Cost via a Direct Method}

In a 2nd approach, we can directly work with a finite model approximation under the average cost criterion. However, this approach comes with limitations for the learning algorithm: While convergence of value functions can be shown, the near optimality of learned policies requires further conditions on the model in view of the analysis in \cite{ky2023qaverage}. Notably, the lack of a minorization condition for the time-discretized diffusion model prevents the analysis in \cite{ky2023qaverage} to be directly applicable for near optimality of learned policies. Please see Appendix \ref{directArgAvg}. 

\section{Learning: Quantized Q-Learning for Controlled Diffusions and Convergence to a Near Optimal Policy}\label{QDiscDiff}

\subsection{Discounted Cost Criterion} Recall Algorithm \ref{disc_alg}. By \cite[Theorem 9]{KSYContQLearning}, this algorithm converges to a limit which gives an optimal policy for an approximate finite model defined in Section \ref{fin_model_section}. The weighting measure is defined by the restriction to the bins of the invariant measure corresponding to the exploration policy used in Q-learning. 

The policy obtained from Algorithm \ref{disc_alg} is then near optimal for the diffusion process by Theorem \ref{theoremDiscNearOpt} as a result of the analysis in Section \ref{sectionDiscFM}.

\subsection{Ergodic Cost Criterion} Given the key supporting result presented in Theorem \ref{NearOptDisErgo1} on near optimality of a (near optimal) solution for the discounted cost criterion for sufficiently high discount parameters, parallel to the argument for the discounted cost setting noted above, Algorithm \ref{disc_alg} leads to a policy which is near optimal for the ergodic cost criterion.

\begin{remark}In Section \ref{directArgAvg} we present an alternative Q-learning algorithm for the average cost setup, which also converges, and its value also converges to the value of the original diffusion problem. For further discussion, please see Section \ref{directArgAvg}.
\end{remark}



\section{Simulation}

In the following, we consider two examples. The first example satisfies all of the presented technical conditions for both discounted and average cost criteria, whereas the second example does not satisfy the conditions required for the average cost criterion. Since near optimality of Euler-Maruyama solutions is established under either of the criteria \cite{pradhan2025discrete}, we work with such a simulation. 

\subsection{Double-Well Stochastic Differential Equation}

We consider a controlled double-well SDE given by
\begin{equation}
dZ_t = (Z_t - Z_t^3 + U_t) \, dt + \upsigma \, dW_t
\end{equation}
The uncontrolled deterministic system $(\zeta=0)$ has two  stable equilibria at $z = \pm 1$ and an unstable equilibrium at $z = 0$.

The control $u_t$ aims to \emph{stabilize the system in unstable equilibrium} $z = 0$.

The running cost is given by:
\begin{equation}
c(z,\zeta)= Q \, z^2 + R \, \zeta^2 
\end{equation}
where $Q=1$ and $R=0.1$.

\begin{itemize}
\item \textbf{State space:} We uniformly  discretize the state space on $z \in [-1.4, 1.4]$ with discretization rate depending on the chosen $h$. 
\item \textbf{Action space:} We discretize the action space on $\zeta \in [-0.5, 0.5]$.

\item \textbf{Control intervals:} We vary $h \in \{0.41, 0.33, 0.25, 0.18, 0.1\}$.

\end{itemize}
Figure \ref{fig:double_well_eval} shows the performance of the policies for $\alpha=0.95$ for different $h$ values and space discretization rates.

\begin{figure}[h!]
    \centering
    \includegraphics[width=0.7\textwidth]{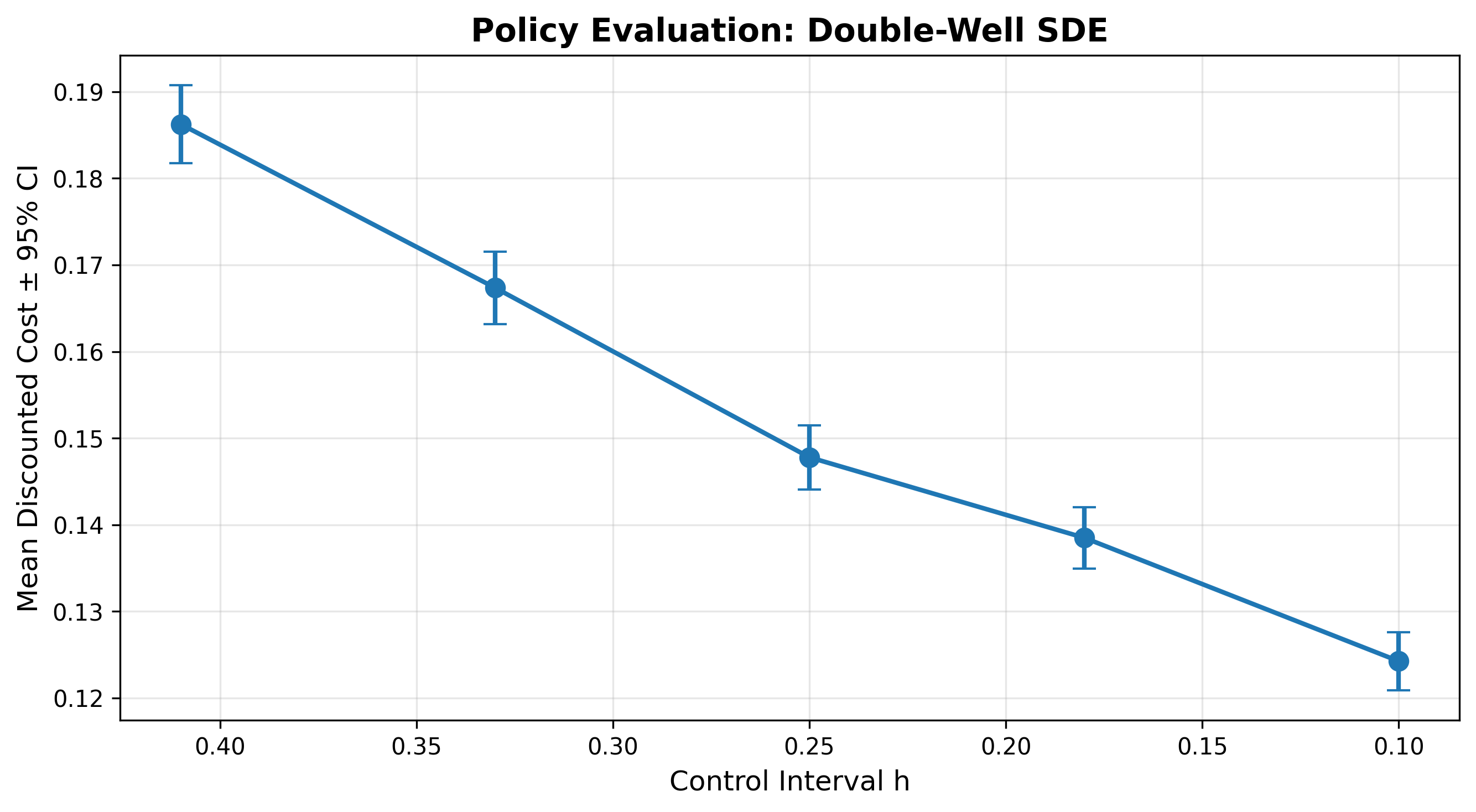}
    \caption{Average cost evaluation under policies for double-well SDE}
    \label{fig:double_well_eval}
\end{figure}

Finally, Figure \ref{vanish_dwell} shows the performance of the policies learned under different discount factors evaluated under the average cost criteria.

\begin{figure}[h!]
    \centering
    \includegraphics[width=0.6\textwidth]{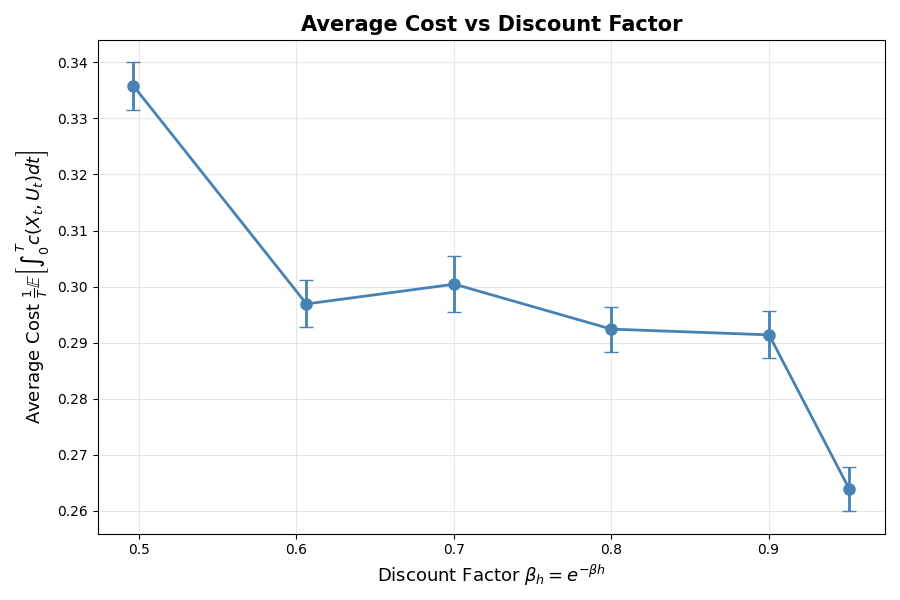}
    \caption{Average cost evaluation under different discount factors for double-well SDE}
    \label{vanish_dwell}
\end{figure}

\subsection{Logistic SDE}

For the next example, we consider a controlled logistic SDE:
\begin{equation}
    dX_t = f(X_t, U_t) \, dt + g(X_t) \, dW_t
\end{equation}
where:
\begin{itemize}
    \item $X_t \in [0, \infty)$ is the population state at time $t$
    \item $U_t \in \mathbb{R}$ is the control input (e.g., harvesting or stocking rate)
    \item The drift is $f(x, \zeta) = r x \left(1 - \frac{x}{K}\right) + \zeta$
    \item The diffusion is $g(x) = \upsigma x$
\end{itemize}

The parameters are chosen such that the growth rate $r=1$, the carrying capacity $K=1$ and the noise intensity $\upsigma=0.4$.

The objective is to minimize the infinite-horizon discounted cost:
\begin{equation}
    J_\alpha(x_0, U) = \mathbb{E}_{x_0}^{U}\left[\int_0^\infty e^{-\alpha t} c(X_t, U_t) \, dt \right]
\end{equation}
where the running cost is:
\begin{equation}
    c(x, \zeta) = Q \left(x - \frac{K}{2}\right)^2 + R \, \zeta^2
\end{equation}
with $Q= 10$, $R = 1$, and discount factor $\alpha = 0.95$.
Hence, the natural flow of the system without control is towards $K=1$, however, 
the cost penalizes deviations from the target population $K/2 = 0.5$ and large control efforts.

To apply Q-learning, we discretize both the state and action spaces:

\paragraph{State discretization:} We partition $[0, 2]$ into $n_x$ bins with grid points $\mathcal{X}_h = \{x_1, \ldots, x_{n_x}\}$.

\paragraph{Action discretization:} We discretize the control space $[-5, 5]$ into $n_u$ actions $\mathcal{U}_h = \{u_1, \ldots, u_{n_u}\}$.

\paragraph{Time discretization:} We use a control discretization parameter $h$ (the time between control updates) and a fine integration timestep $\Delta t = 0.001$ for simulating the SDE.

We change the  space discretization sizes with $h$ as shown in Table~\ref{tab:discretization}.

\begin{table}[h]
\centering
\begin{tabular}{c|c|c}
\hline
$h$ & $n_x$ (state bins) & $n_u$ (action bins) \\
\hline
0.41 & 2 & 5 \\
0.33 & 4 & 7 \\
0.25 & 6 & 9 \\
0.18 & 9 & 12 \\
0.10 & 12 & 15 \\
\hline
\end{tabular}
\caption{Number of space discretization bins for different $h$.}
\label{tab:discretization}
\end{table}

For simulating the environment, we use the Milstein scheme such that:
    \begin{equation}
        X_{k+1} = X_k + f(X_k, \zeta) \Delta t + g(X_k) \Delta W_k + \frac{1}{2} g(X_k) \upsigma (\Delta W_k^2 - \Delta t)
    \end{equation}
    where $\Delta W_k \sim \mathcal{N}(0, \Delta t)$ and where $\Delta_t=0.001$.
    
    The logistic SDE in continuous time stays positive; however, the discrete time approximation can move the approximate state to negative values. To handle this issue, we enforce $x \geq 0$ by reflecting negative values to zero.
    
     For exploration, we use pure random action selection.

Figure \ref{fig:mc_cost_h} shows the expected accumulated cost for different values of $h$. As expected the performance improves as the discretization gets finer.

\begin{figure}[h!]
    \centering
    \includegraphics[width=0.7\textwidth]{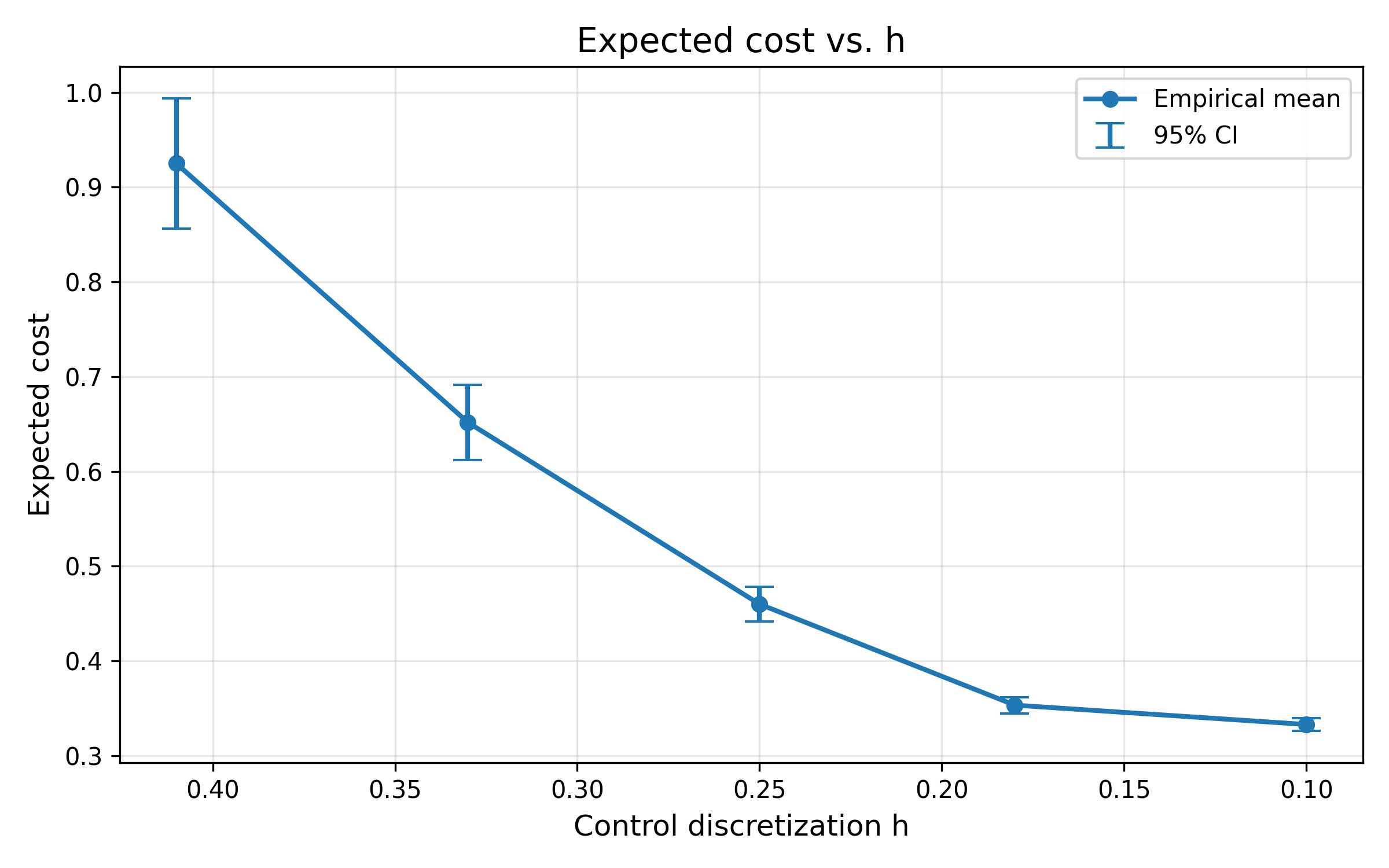}
    \caption{Expected cost versus control discretization $h$ with 95\% confidence interval for the sample mean.}
    \label{fig:mc_cost_h}
\end{figure}

We plot the average state trajectories under the learned policies in Figure \ref{fig:avg_state_paths}. As can be seen, the control does a better job of pulling the state towards $0.5$ for finer discretization rates.

\begin{figure}[h!]
    \centering
    \includegraphics[width=0.7\textwidth]{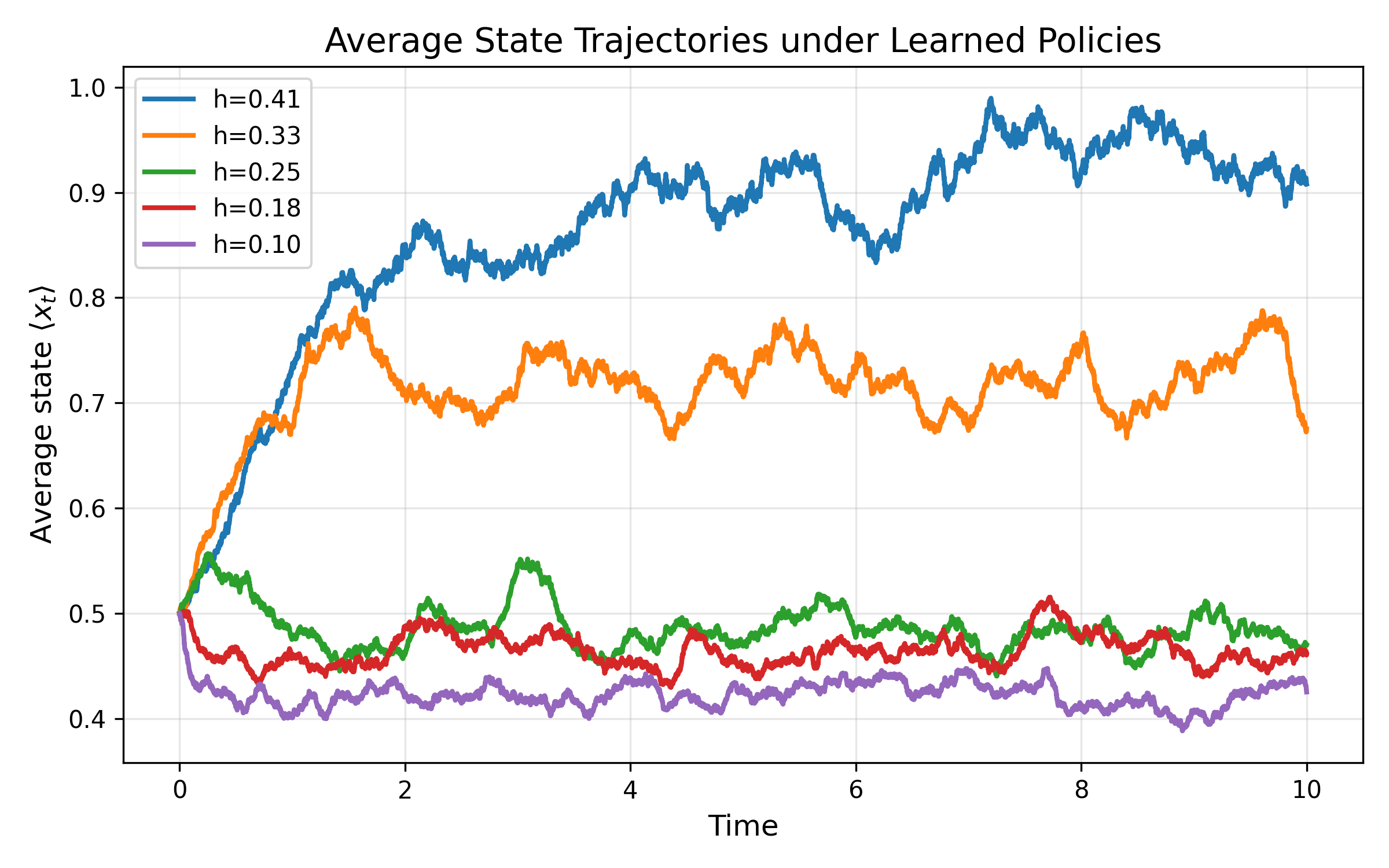}
    \caption{Average state trajectories $ x_t $ under the learned policies for different control discretizations $h$. The control pulls the state toward $K/2=0.5$, against the natural growth toward $K=1$.}
    \label{fig:avg_state_paths}
\end{figure}

Next, we run the learning algorithms using different values of discount factors $e^{-\alpha\times h}$ in increasing order that approaches to 1. While doing this, we keep $h=0.01$. In particular, we consider the values of $\alpha$ for which we have $e^{-\alpha h}=\{0.5,0.6,0.7,0.8,0.9,0.95\}$. We then evaluate the learned policies under the infinite horizon average cost criteria, i.e. for large $T$ we evaluate 
\begin{align*}
\frac{1}{T}\Exp\left[\int_{0}^{T}c(X_t,U_t) dt\right]
\end{align*}
using controls that correspond to different discount factors for $h=0.1$ and for the corresponding state discretizations that correspond to $h=0.1$. Figure \ref{fig:avg_vs_disc} shows that the cost decreases as the discount factor $e^{-\alpha h}$ approaches $1$ as expected.

\begin{figure}[h!]
    \centering
    \includegraphics[width=0.7\textwidth]{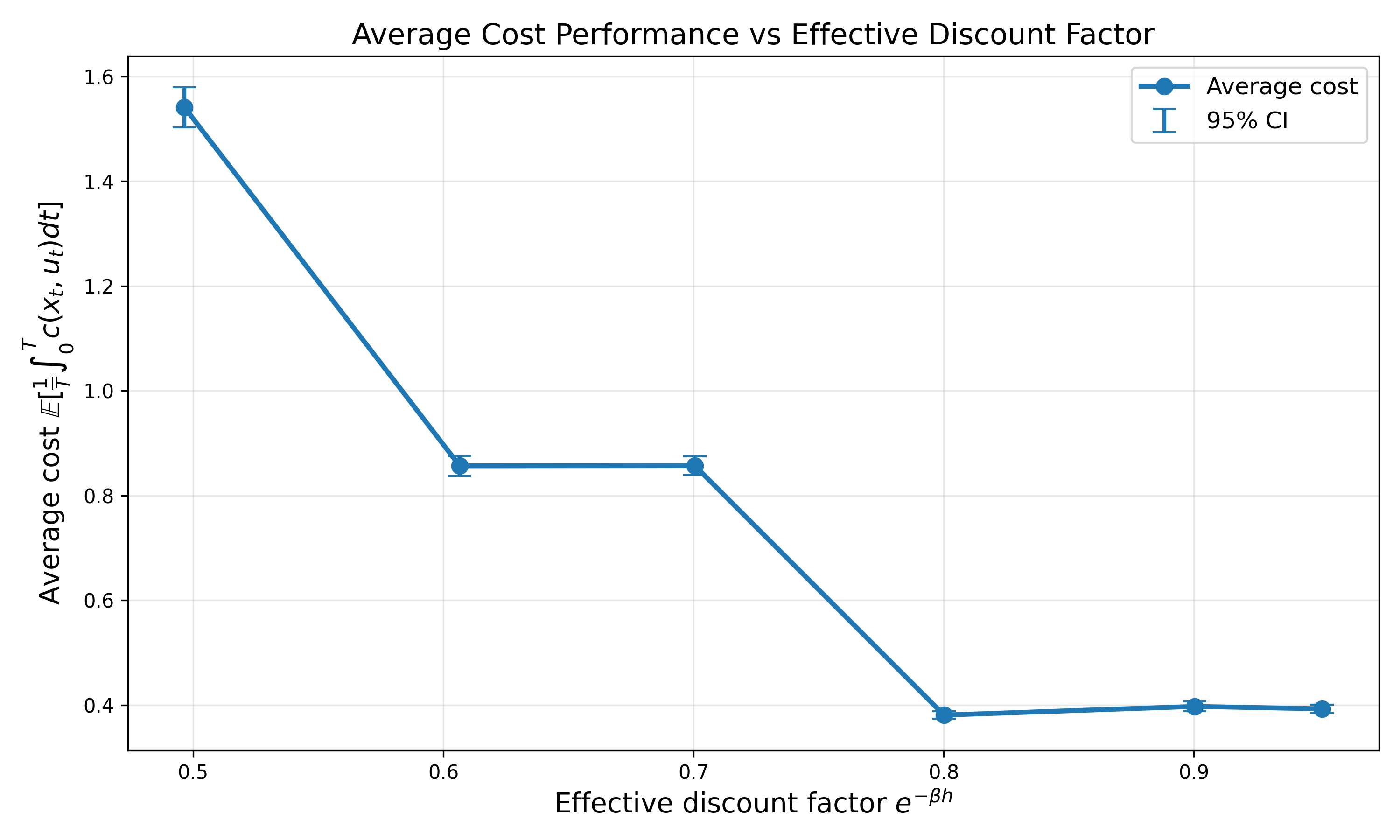}
    \caption{Average cost evaluation under policies learned under different discount factors}
    \label{fig:avg_vs_disc}
\end{figure}

\section{Conclusion}

We presented rigorous reinforcement learning results for controlled diffusions under discounted and ergodic cost criteria where the state space is $\mathbb{R}^n$ for some $n \in \mathbb{N}$. We showed that a quantized Q-learning algorithm with piecewise constant control policies applied at discrete-time instances leads to a fixed point which then leads to a near optimal control policy for each of the criteria under mild conditions. These conditions are mild in the sense that they involve only the conditions which are needed for the existence of optimal solutions.

\section{Appendix}

\subsection{A direct algorithm for average cost}\label{directArgAvg}

In this section, we present a direct algorithm for the average cost criterion. Algorithm \ref{erg_alg} below differs from Algorithm \ref{disc_alg} in the update steps (\ref{Q_discounted}) and (\ref{Q_ergodic}). 

Following \cite{ky2023qaverage}, we have that Algorithm \ref{erg_alg} presented below also converges and it also holds that the values (expected cost under an optimal policy) of finite models converge to the optimal value of the continuous model, also in view of Theorem \ref{TNearOpt1A}.  

However, while the values of the finite approximate models (as approximations get finer) converge to that of the true model, we do not yet have a proof that the obtained policy from the finite model is near optimal for the true model (this generally requires some uniformity conditions on convergence: we refer the reader to \cite{ky2023qaverage} for further discussion on such convergence properties.).
\newpage
\begin{algorithm}\caption{Q-learning Algorithm for Sampled Controlled Process for Average Cost Problems}
\label{erg_alg}
\begin{algorithmic}[1]
\STATE Choose a sampling interval $h > 0$.
\STATE Choose finite subsets $\mathds{X}_h \subset \mathds{X}$ and $\mathds{U}_h \subset \mathds{U}$.
\STATE Choose a sufficiently small normalizing constant $\delta>0$
\STATE Define a discretization mapping $\phi_\mathds{X}:\mathds{X} \to \mathds{X}_h$ (e.g., a nearest neighbour map).
\STATE Select a $\mathds{U}_h$-valued piecewise constant exploration process $\hat{u}(t)$.
\STATE Observe the process at discrete time steps and discretize it i.e. $\hat{X}_n:=\phi_{\mathds{X}}\left(X(h\times n)\right)$. For all $(\hat{x}, \hat{u}) \in \mathds{X}_h \times \mathds{U}_h$ update the Q values using the cost realization $c(X(k \times h), \hat{u})$:
    \begin{align}\label{Q_ergodic}
    Q_{k+1}(\hat{x}, \hat{u}) = & (1 - \alpha_k(\hat{x}, \hat{u})) Q_k(\hat{x}, \hat{u}) \nonumber \\
    & + \alpha_k(\hat{x}, \hat{u}) \left( c(X(k \times h), \hat{u}) \cdot h +  \min_{v \in \mathds{U}_h} Q_k(\hat{X}_{k+1}, v)  - \delta \sum_{y\in \mathds{X}_h} V_k(y)\right),
    \end{align}
    where  $\hat{X}_{k+1}$ is the sampled state observed after $\hat{X}_k = \hat{x}$ and where $V_k(\hat{x}) = \min_{v\in\mathds{U}_h}Q_k(\hat{x},\hat{u})$.

\STATE The iterations $Q_k : \mathds{X}_h \times \mathds{U}_h \to \mathds{R}$ converge almost surely to some $Q^* : \mathds{X}_h \times \mathds{U}_h \to \mathds{R}$. We will also show that the limit values are the relative  $Q$-values of a finite controlled Markov chain under ergodic cost criteria. Define the policy $\gamma_h : \mathds{X}_h \to \mathds{U}_h$ by
\[
\gamma_h(\hat{x}) = \arg\min_{\hat{u} \in \mathds{U}_h} Q^*(\hat{x}, \hat{u}).
\]
\STATE Define the control process $u_h(t)$ as
\[
u_h(t) = \gamma_h\left( \phi_\mathds{X}(X(i \cdot h)) \right), \quad \text{for } t \in [i \cdot h, (i+1) \cdot h),
\]
i.e., $u_h$ is a piecewise constant process changing value at sampling instances according to the learned map $\gamma_h$.
\end{algorithmic}
\end{algorithm}

\bibliography{Somnath_Robustness,SerdarBibliography,Quantization}

\end{document}